\numberwithin{equation}{subsection}
\theoremstyle{plain}
        \newtheorem{theorem}[equation]{Theorem}
        \newtheorem{lemma}[equation]{Lemma}
        \newtheorem{proposition}[equation]{Proposition}
        \newtheorem{corollary}[equation]{Corollary}
	    \newtheorem{definition}[equation]{Definition}
        \newtheorem{sinnadaitalica}[equation]{}
\theoremstyle{definition}
        \newtheorem{remark}[equation]{Remark}
        \newtheorem{sinnadastandard}[equation]{}
\newcommand{\cqd}{\hfill$\Box$}  
\newcommand{\comw}{\textcolor{white}}
\newcommand{\cc}{\mathcal}
\newcommand{\ff}{\mathsf}
\newcommand{\mr}[1]{\overset {#1} {\longrightarrow}}
\newcommand{\smr}[1]{\overset {#1} {\rightarrow}}
\newcommand{\ml}[1]{\overset {#1} {\longleftarrow}}
\newcommand{\Mr}[1]{\overset {#1} {\Longrightarrow}}
\newcommand{\Ml}[1]{\overset {#1} {\Longleftarrow}}
\newcommand{\mrpairviejo}[2]
   {
    \xymatrix@C=5ex@R=2.4ex
            {
             {} \ar@<1.6ex>[r]^{#1}
	            \ar@<-1.1ex>[r]^{#2}
	         & {}
            }
   }
\newcommand{\mrpair}[2]
   {
    \xymatrix@C=5ex@R=2.4ex
            {
             {} \ar@<1ex>[r]^{#1}
	            \ar@<-1ex>[r]_{#2}
	         & {}
            }
   }
\newcommand{\mlpair}[2]
   {
    \xymatrix@C=5ex@R=2.4ex
            {
             {}
              & {} \ar@<1.0ex>[l]_{#2}
	          \ar@<-1.7ex>[l]_{#1}
            }
    }
\newcommand{\cellrd}[3] 
 {
  \xymatrix@C=7ex@R=2.4ex
         {
          {} \ar@<1.6ex>[r]^{#1}
             \ar@{}@<-1.3ex>[r]^{\!\! {#2} \, \!\Downarrow}
             \ar@<-1.1ex>[r]_{#3}
          & {}
         }
 }
 \newcommand{\scellrd}[3] 
 {
  \xymatrix@C=4.5ex@R=2.4ex
         {
          {} \ar@<1.6ex>[r]^{#1}
             \ar@{}@<-1.3ex>[r]^{\!\! {#2} \, \!\Downarrow}
             \ar@<-1.1ex>[r]_{#3}
          & {}
         }
}
 \newcommand{\modif}[3] 
 {
  \xymatrix@C=7ex@R=2.4ex
         {
          {} \ar@<1.6ex>@{=>}[r]^{#1}
             \ar@{}@<-1.3ex>@{=>}[r]^{\!\! {#2} \, \!\downarrow}
             \ar@<-1.1ex>[r]_{#3}
          & {}
         }
 }
\newcommand{\cellld}[3] 
 {
  \xymatrix@C=6ex@R=2.4ex
         {
            {}
          & {} \ar@<1.0ex>[l]^{#3}
          \ar@{}@<-1.7ex>[l]^{\!\! {#2} \, \!\Downarrow}
	                                 \ar@<-1.7ex>[l]_{#1}
         }
 }
\newcommand{\cellpairrd}[4] 
 {
  \xymatrix@C=8ex@R=2.4ex
         {
          {} \ar@<1.6ex>[r]^{#1}
             \ar@{}@<-1.3ex>[r]^{\!\! {#2} \, \!\Downarrow
                                 \;\; {#3} \, \!\Downarrow }
             \ar@<-1.1ex>[r]_{#4}
          & {}
         }
 }
\newcommand{\coLim}[2]
   {
    \underset{#1}{\underrightarrow{\ff{Lim}}}
    \; {#2}
   }
\newcommand{\Lim}[2]
   {
    \underset{#1}{\underleftarrow{\ff{Lim}}}
    \; {#2}
   }
\newcommand{\biLim}[2]
   {
    \underset{#1}{\underleftarrow{\ff{biLim}}}
    \; {#2}
   }
\newcommand{\bicoLim}[2]
   {
    \underset{#1}{\underrightarrow{\ff{biLim}}}
    \; {#2}
   }
\newcommand{\dcell}[1]  
          {
					 \ar@<8pt>@{-}[d]+<-4pt,8pt>
           \ar@<-8pt>@{-}[d]+<4pt,8pt>
           \ar@{}[d]|{#1}
          }
\newcommand{\dcellb}[1]   
          {
           \ar@<10pt>@{-}[d]+<-5pt,8pt>
           \ar@<-10pt>@{-}[d]+<5pt,8pt>
           \ar@{}[d]|{#1}
          }
\newcommand{\deq}        
         {
          \ar@{=}[d]
         }
\newcommand{\dreq}       
         {
          \ar@{=}[dr]
         }
\newcommand{\dleq}       
         {
          \ar@{=}[dl]
         }
\newcommand{\dccell}[1]    
          {
           \ar@{-}[ld]
           \ar@{-}[rd]
           \ar@{}[d]|{#1}
          }
\newcommand{\dcellbb}[1]   
          {
           \ar@<20pt>@{-}[d]+<-10pt,12pt>
           \ar@<-20pt>@{-}[d]+<10pt,12pt>
           \ar@{}[d]|{#1}
          }
\newcommand{\dl}    
          {
           \ar@<-2pt>@{-}[d]+<4pt,8pt>
          }
\newcommand{\dr}    
          {
           \ar@<2pt>@{-}[d]+<-4pt,8pt>
          }
\newcommand{\dc}[1]    
          {
           \ar@{}[d]|{#1}
          }
\newcommand{\dcr}[1]    
          {
           \ar@{}[dr]|{#1}
          }
\newcommand{\uccell}[1]      
          {
           \ar@{-}[ur]
           \ar@{}[u]|{#1}
           \ar@{-}[ul]
          }
\newcommand{\uccellb}[1]     
          {
           \ar@<-1ex>@{-}[ur]
           \ar@{}[u]|{#1}
           \ar@<1ex>@{-}[ul]
          }
\newcommand{\dcellop}[1]  
          {
					 \ar@<6pt>@{-}[d]+<6pt,8pt>
           \ar@<-6pt>@{-}[d]+<-6pt,8pt>
           \ar@{}[d]|{#1}
          }
\newcommand{\dcellopb}[1]  
          {
					 \ar@<7pt>@{-}[d]+<7pt,8pt>
           \ar@<-7pt>@{-}[d]+<-7pt,8pt>
           \ar@{}[d]|{#1}
          }
\newcommand{\did}{\ar@2{-}[d]}
\newcommand{\op}[1]
          {
           \ar@{-}[ld]
           \ar@{-}[rd]
           \ar@{}[d]|{#1}
          }
\newcommand{\cl}[1]
          {
           \ar@{-}[ur]
           \ar@{}[u]|{#1}
           \ar@{-}[ul]
          }
\newcommand{\Pro}[1]{2\hbox{-}\cc{P}ro(\cc{#1})}
\begin{document}

\title{A theory of 2-pro-objects  \\
       (with expanded proofs)
      }

\author{M. Emilia Descotte and Eduardo J. Dubuc}

\begin{abstract}
In \cite{G2}, Grothendieck develops the theory of \mbox{pro-objects} over a category $\ff{C}$.  The fundamental property of the category $\ff{Pro}(\ff{C})$ is that there is an embedding $\ff{C} \mr{c} \ff{Pro}(\ff{C})$, the category
$\ff{Pro}(\ff{C})$ is closed under small cofiltered limits, and these limits are free in the sense that for any \mbox{category} $\ff{E}$ closed under small cofiltered \mbox{limits}, pre-composition with $c$ determines an equivalence of categories
$\cc{C}at(\ff{Pro}(\ff{C}),\,\ff{E})_+ \simeq \cc{C}at(\ff{C},\, \ff{E})$, (where the
"$+$" indicates the full subcategory of the functors \mbox{preserving} cofiltered limits).  In this paper we develop a 2-dimensional theory of \mbox{pro-objects}. Given a 2-category
$\cc{C}$, we define the \mbox{2-category} $\Pro{C}$ whose objects we call 2-pro-objects. We prove that $\Pro{C}$ has all the expected basic properties adequately relativized to the \mbox{2-categorical} \mbox{setting}, including the universal property corresponding to the one \mbox{described} above. We have at hand the results of $\cc{C}at$-enriched category theory, but our theory goes beyond the \mbox{$\cc{C}at$-enriched} case since we consider the non strict notion of pseudo-limit, which is usually that of practical interest.
\end{abstract}

\maketitle

{\bf Note.} This is a version of the article \emph{A theory of 2-Pro-objects, Cahiers de topologie et g\'eom\'etrie diff\'erentielle cat\'egoriques, Vol LV, 2014}, in which we have added more details in several proofs, and utilized the elevators calculus graphical notation.
  
{\bf Introduction.} In this paper we develop a 2-dimensional theory of
\mbox{pro-objects}. Our motivation are intended applications in homotopy, in
\mbox{particular} strong shape theory. The $\check{C}$ech nerve before passing modulo
homotopy determines a 2-pro-object which is not a pro-object, leaving
outside the actual theory of pro-objects. Also, the theory of \mbox{2-pro-objects}
reveals itself a very interest subject in its own right.

Given a 2-category $\cc{C}$ , we define the 2-category $2$-$\cc{P}ro(\cc{C})$ , whose
objects we call 2-pro-objects. A 2-pro-object is a 2-functor (or diagram)
indexed in a 2-cofiltered 2-category. Our theory goes beyond enriched
category theory because in the definition of the category of morphisms of \mbox{2-pro-objects,} instead of strict
2-limits of categories, we use pseudo-limits of categories, which are usually 
those of practical interest. We prove that $2$-$\cc{P}ro(\cc{C})$  has all the expected
basic properties of the category of pro-objects, adequately relativized
to the 2-categorical setting.

Section 1 contains some background material on 2-categories. Most
of this is standard, but some results (for which we \mbox{provide} proofs) do not
appear to be in the literature. In particular we prove that \mbox{pseudolimits}
are computed pointwise in the 2-functor \mbox{2-categories} $\cc{H}om(\cc{C},\cc{D})$ and
$\cc{H}om_p(\cc{C},\cc{D})$ (\mbox{definition} \ref{ccHom}), with \mbox{2-natural} or \mbox{pseudonatural} \mbox{transformations}
as \mbox{arrows}. This result, \mbox{although} \mbox{expected}, needs \mbox{nevertheless}
a proof. We recall from \cite{DS} the \mbox{construction} of \mbox{2-filtered} pseudocolimits
of categories which is \mbox{essential} for the \mbox{computations} in the 2-category of
2-pro-objects \mbox{introduced} in \mbox{section} 2. Finally, we \mbox{consider} the notion of
flexible functors from \cite{BKP} and state a \mbox{useful} \mbox{characterization} independent
of the left \mbox{adjoint} to the inclusion $\cc{H}om(\cc{C},\cc{D}) \rightarrow \cc{H}om_p(\cc{C},\cc{D})$ (Proposition
\ref{flexiblechar}). With this \mbox{characterization} the pseudo Yoneda lemma just says that the \mbox{representable} 2-functors are
flexible. It follows also that
the \mbox{2-functor} associated to any 2-pro-object is
flexible, and this has \mbox{important}
\mbox{consequences} for a Quillen model structure in the 2-category
of \mbox{2-pro-objects} currently being developed by the authors in ongoing
research.

Section 2 contains the main results of this paper. In a first subsection
we define the 2-category of 2-pro-objects of a 2-category $\cc{C}$ and establish
the basic formula for the morphisms and 2-cells between 2-pro-objects in
terms of pseudo limits and pseudo colimits of the hom categories of $\cc{C}$.
With this, inspired in the notion of an arrow representing a morphism
of pro-objects found in \cite{AM}, in the next subsection we introduce the
notion of an arrow and a 2-cell in $\cc{C}$ representing an arrow and a 2-
cell in $2$-$\cc{P}ro(\cc{C})$ , and develop computational properties of 2-pro-objects
which are necessary in our proof that the 2-category $2$-$\cc{P}ro(\cc{C})$  is closed
under 2-cofiltered pseudo limits. In the third subsection we construct a
2-filtered category which serves as the index 2-category for the 2-filtered
pseudolimit of 2-pro-objects (Definition \ref{kequis} and Theorem \ref{teo}). This
is also inspired in a construction and proof for the same purpose found
in \cite{AM}, but which in our 2-dimensional case reveals itself very complex
and difficult to manage effectively. We were forced to have recourse to
this complicated construction because the conceptual treatment of this
problem found in \cite{G2} does not apply in the 2-dimensional case. This
is so because a 2-functor is not the pseudocolimit of 2-representables
indexed by its 2-category of elements. Finally, in the last subsection we
prove the universal properties of $2$-$\cc{P}ro(\cc{C})$ (Theorem \ref{pseudouniversal}), in a way
which is novel even if applied to the classical theory of pro-objets.

\vspace{1ex}

 {\bf Notation.} 2-Categories will be denoted with the ``mathcal" font 
 $\cc{C},\;\cc{D},\; \ldots \;$, \mbox{2-functors} with the capital ``mathff'' font, $\ff{F}$, $\ff{G}$, ... and \mbox{2-natural transformations}, pseudonatural transformations and modifications with the greek alphabet.
For objects in a 2-category, we will use capital ``mathff" font $\ff{C},\;\ff{D}, \ldots \;$, for arrows in a 2-category small case letters in ``mathff" font
$\ff{f},\;\ff{g},\;\ldots \;$, and for the 2-cells the greek \mbox{alphabet}. \mbox{However}, when a \mbox{2-category} is intended to be used as the index 2-category of a \mbox{2-diagram}, we will use small case letters $i,\;j,\;\ldots \;$ to denote its objects, and small case \mbox{letters} $u,\;v,\; \dots \;$ to denote its arrows. Categories will be denoted with capital "mathff" font.

\vspace{1ex}

Besides the usual pasting diagrams we will use the \emph{elevators calculus} for expresions denoting 2-cells. 
This is a graphic notation invented by the second author in 1969  
to write equations of natural transformations between functors. In this paper we use it for 2-cells in a 2-category. The identity arrows are left as empty spaces, the 2-cells are  
written as cells, and the identity 2-cell as a double line. Compositions are read from top to bottom. Equation \ref{basicelevator} below is the basic equality of the elevator calculus:
$$ \label{ascensor}
\xymatrix@C=0ex
         {
            \,\ff{f}'\, \dcell{\alpha'} & \,\ff{f}\, \did
          \\
            \,\ff{g}'\, \did & \,\ff{f}\, \dcell{\alpha}
          \\
             \,\ff{g}'\,  &  \,\ff{g}\, 
         }
\xymatrix@R=6ex{\\ \;\;\;=\;\;\; \\}
\xymatrix@C=0ex
         {
             \,\ff{f}'\, \did & \,\ff{f}\, \dcell{\alpha}
          \\
             \,\ff{f}'\,\dcell{\alpha'} & \,\ff{g}\, \did
          \\
             \,\ff{g}'\, & \,\ff{g}\, 
         }
\xymatrix@R=6ex{ \\ \;\;\;=\;\;\; \\}
\xymatrix@C=0ex@R=0.9ex
         {
             {} & {}
          \\
                \,\ff{f}'\, \ar@<4pt>@{-}'+<0pt,-6pt>[ddd] 
                   \ar@<-4pt>@{-}'+<0pt,-6pt>[ddd]^{\alpha'}
             &  \,\ff{f}\,  \ar@<4pt>@{-}'+<0pt,-6pt>[ddd] 
                   \ar@<-4pt>@{-}'+<0pt,-6pt>[ddd]^{\alpha}
          \\ 
             {} & {}
          \\ 
             {} & {}
          \\
             \,\ff{g}'\, & \,\ff{g}\,.
         }
$$
This allows to move cells up and down when there are no obstacles, as if they were elevators. 
\emph{With this we move cells to form configurations that fit valid equations in order to prove new equations.}

\section{Preliminaries on 2-categories}

We distinguish between \emph{small} and \emph{large} sets. For us \emph{legitimate} \mbox{categories} are categories with small hom sets, also called \emph{locally small}. We freely consider without previous warning illegitimate categories with large hom sets, for example the category of all (legitimate) categories, or functor categories with large (legitimate) exponent. They are \mbox{legitimate} as categories in some higher universe, or they can be considered as \mbox{convenient} notational \mbox{abbreviations} for large collections of data. In fact, questions of size play no overt role in this paper, except that we elect for simplicity to consider only small \mbox{2-pro-objects}. We will \mbox{explicitly} mention whether the categories are legitimate or  small when necessary. We reserve the notation $\cc{C}at$ for the legitimate 2-category of small categories, and we will denote $\cc{CAT}$ the illegitimate category (or 2-category) of all legitimate categories in some arbitrary sufficiently high universe.

 We begin with some background material on 2-categories. Most of
this is standard, but some results (for which we provide proofs) do not appear to be in the literature. We also set notation and terminology as we will explicitly use in this paper.

\subsection{Basic theory}\label{seccion2-categorias} $ $

Let $\cc{C}at$ be the category of small categories. By a \mbox{2-category}, we mean a $\cc{C}at$ enriched category. A 2-functor, a 2-fully-faithful \mbox{2-functor}, a \mbox{2-natural} transformation and a \mbox{2-equivalence} of 2-categories, are a \mbox{$\cc{C}at$-functor}, a \mbox{$\cc{C}at$-fully-faithful} functor, a $\cc{C}at$-natural transformation and a \mbox{$\cc{C}at$-equivalence} respectively.

 In the sequel we will call \emph{2-category} an structure satisfying the \mbox{following} descriptive definition free of the size restrictions implicit above. Given a \mbox{2-category,}
as usual, we denote horizontal composition by \mbox{juxtaposition}, and vertical
composition by a "$\circ$".
\begin{sinnadaitalica} {\bf 2-Category.}
\addtocounter{equation}{-1}
A \emph{2-category} $\cc{C}$ consists on objects or 0-cells \mbox{$\ff{C}$, $\ff{D}$, ...,} arrows or 1-cells $\ff{f}$, $\ff{g}$, ...,  and 2-cells $\alpha$, $\beta$, ... .

$$\ff{C} \cellrd{\ff{f}}{\alpha}{\ff{g}} \ff{D}$$

The objects and the arrows form a category (called the \mbox{underlying} \mbox{category} of $\cc{C}$), with composition (called "horizontal") denoted by \mbox{juxtaposition}. For a fixed $\ff{C}$ and $\ff{D}$, the arrows between them and the 2-cells between these arrows form a category $\cc{C}(\ff{C},\ff{D})$ under "vertical" \mbox{composition}, denoted by a "$\circ$". There is also an associative horizontal \mbox{composition} between 2-cells denoted by juxtaposition, with units
$id_{id_{\ff{C}}}$. The following is the basic \mbox{2-category} diagram: $$
\xymatrix@R0.5ex
        {
           {\;\;} \ar[rr]^{\ff{f}}
         & {\;\;}
         & {\;\;} \ar[rr]^{\ff{f}'}
         & {\;\;}
         & {\;\;}
         \\
           {\;\;}
         & {\Downarrow}\alpha
         & {\;\;}
         & {\Downarrow}{\alpha'}
         & {\;\;}
         & {\;\;}
         \\
           {\ff{C}} \ar[rr]^{\ff{g}}
         & {\;\;}
         & {\ff{D}} \ar[rr]^{\ff{g}'}
         & {\;\;}
         & {\ff{E}}
         \\
           {\;\;}
         & {\Downarrow}\beta
         & {\;\;}
         & {\Downarrow}{\beta'}
         & {\;\;}
         & {\;\;}
         \\
           {\;\;} \ar[rr]^{\ff{h}}
         & {\;\;}
         & {\;\;} \ar[rr]^{\ff{h}'}
         & {\;\;}
         & {\;\;}}
$$
with the equations
$
(\beta'\beta)\circ(\alpha' \alpha) =
           (\beta'\circ\alpha')(\beta\circ\alpha),
$
$id_{\ff{f}'}id_\ff{f}=id_{\ff{f}'\ff{f}}$.
\end{sinnadaitalica}

In particular it follows that given 
$\ff{C} \cellrd{\ff{f}}{\alpha}{\ff{g}} \ff{D}
\cellrd{\ff{f}'}{\alpha'}{\ff{g}'} \ff{E}$, we have:
\begin{equation} \label{basicelevator}
 (\alpha'\,id_{\ff{g}}) \circ (id_{\ff{f}'}\,\alpha) =
(id_{\ff{g}'}\,\alpha) \circ (\alpha'\,id_{\ff{f}}) =
(\alpha'\alpha).
\end{equation}

\vspace{1ex}

We consider juxtaposition more binding than "$\circ$",
thus $\alpha \beta \circ \gamma $ means $(\alpha \beta ) \circ \gamma$. We will abuse notation by writing $\ff{f}$ instead of $id_\ff{f}$ for morphisms $\ff{f}$ and $\ff{C}$ instead of $id_\ff{C}$ for objects $\ff{C}$.

\begin{sinnadaitalica} {\bf Dual 2-Category.} \label{oposite}
If $\cc{C}$ is a 2-category, we denote by $\cc{C}^{op}$ the \mbox{2-category} with the same objects as $\cc{C}$ but with $\cc{C}^{op}(\ff{C},\ff{D})=\cc{C}(\ff{D},\ff{C})$, i.e. we reverse the 1-cells but not the 2-cells.
\end{sinnadaitalica}

\begin{sinnadaitalica} {\bf 2-functor.}\label{2functor}
A \emph{2-functor} $\ff{F}:\cc{C} \mr{} \cc{D}$ between 2-categories is an enriched functor over $\cc{C}at$. As such, sends objects to objects, arrows to arrows and 2-cells to 2-cells, strictly preserving all the structure. \end{sinnadaitalica}

\begin{sinnadaitalica} {\bf 2-fully-faithful.}\label{2f&f}
A 2-functor $\ff{F}:\cc{C}\mr{} \cc{D}$ is said to be \mbox{\emph{2-fully-faithful}} if $\forall \ \ff{C},\ \ff{D}\in \cc{C}$, $\ff{F}_{\ff{C},\ff{D}}:\cc{C}(\ff{C},\ff{D})\mr{} \cc{D}(\ff{F}\ff{C},\ff{F}\ff{D})$ is an isomorphism of categories.
\end{sinnadaitalica}
\begin{sinnadaitalica} {\bf Pseudonatural.} \label{pseudonatural}
A \emph{pseudonatural transformation}
 $\cc{C} \cellrd{\ff{F}}{\theta}{\ff{G}} \cc{D}$ \mbox{between} 2-functors consists in a family of morphisms $\{\ff{F}\ff{C}\mr{\theta_{\ff{C}}} \ff{G}\ff{C}\}_{\ff{C}\in \cc{C}}$ and a family of invertible 2-cells $\{\ff{G}\ff{f}\theta_{\ff{C}}\Mr{\theta_\ff{f}} \theta_{\ff{D}}\ff{F}\ff{f}\}_{\ff{C}\mr{\ff{f}} \ff{D}\in \cc{C}}$

$$\xymatrix@R=.5pc@C=.5pc{\ff{F}\ff{C}\ar[rr]^{\theta_{\ff{C}}} \ar[dd]_{\ff{F}\ff{f}} & & \ff{G}\ff{C} \ar[dd]^{\ff{G}\ff{f}} \\ & \Downarrow \theta_\ff{f} & \\ \ff{F}\ff{D} \ar[rr]_{\theta_{\ff{D}}} && \ff{G}\ff{D}}$$

\noindent satisfying the following conditions:

\vspace{1ex}

\noindent PN0: $\forall \ \ff{C}\in \cc{C}$,
$\hspace{12ex} \theta_{id_\ff{C}}=id_{\theta_\ff{C}}$

\noindent PN1: $\forall \ \ff{C}\mr{\ff{f}} \ff{D}\mr{\ff{g}} \ff{E}$,
$\hspace{3ex}  \theta_g \ff{F f} \circ \ff{G g} \,\theta_\ff{f} = \theta_{\ff{gf}}.$

$$\vcenter{\xymatrix@C=-0.2pc{
 	   \ff{G}\ff{g} \did	 
 	   & & 
 	   \ff{G}\ff{f} \dl 
 	   &&
 	   \theta_\ff{C} \ar@{}[dll]|{\theta_\ff{f}} \dr
 	   \\
 	   \ff{G}\ff{g} \dl
 	   & & 
 	   \theta_\ff{D} \ar@{}[dll]|{\theta_\ff{g}} \dr
 	   &&
 	    \ff{F}\ff{f} \did
 	   \\ 
 	   \theta_{\ff{E}} 
            & &  \ff{F}\ff{g} 
            &&
            \ff{F}\ff{f}
            }}
 \vcenter{\xymatrix@C=-.4pc{\quad = \quad \quad }}
 \vcenter
   {
    \xymatrix@C=-.2pc@R=3pc
        {&
 		 \!\!\!\!\! \ff{G}\ff{g}\ff{f} \!\!\!\!\! \dl
 		&& &
 		\theta_\ff{C} \ar@{}[dll]_{\theta_{\ff{g}\ff{f}}\!\!\!\!}  \dr
 		\\
 		&
 		 \theta_\ff{E} 
 		& &&
 		 \ff{F}\ff{g}\ff{f} 
 		}
   }\ \ \ i.e.$$
 
$$\vcenter{\xymatrix@C=.5pc@R=.5pc{\ff{F}\ff{C} \ar[dd]_{\ff{F}\ff{f}} \ar[rr]^{\theta_{\ff{C}}} && \ff{G}\ff{C} \ar[dd]^{\ff{G}\ff{f}}&& \ff{F}\ff{C} \ar[dddd]_{\ff{F}\ff{g}\ff{f}} \ar[rr]^{\theta_{\ff{C}}} && \ff{G}\ff{C} \ar[dddd]^{\ff{G}\ff{g}\ff{f}} \\ 
            & \Downarrow \theta_\ff{f} &&&&&&\\
            \ff{F}\ff{D} \ar[dd]_{\ff{F}\ff{g}} \ar[rr]^{\theta_{\ff{D}}} && \ff{G}\ff{D} \ar[dd]^{\ff{G}\ff{g}}
            & = \;\; && \Downarrow \theta_{\ff{g}\ff{f}} & \\
            & \Downarrow \theta_\ff{g} &&&&& \\
            \ff{F}\ff{E} \ar[rr]_{\theta_{\ff{E}}} && \ff{G}\ff{E} && \ff{F}\ff{E}\ar[rr]_{\theta_{\ff{E}}}  && \ff{G}\ff{E}}}$$

\noindent PN2: $\forall \ \ff{C}\cellrd{\ff{f}}{\alpha}{\ff{g}}\ff{D}$,
$\hspace{5ex} \theta_\ff{g} \circ \ff{G}\alpha \,\theta_\ff{C} = \theta_\ff{D} \ff{F}\alpha \circ \theta_\ff{f}$            

$$\vcenter{\xymatrix@C=-.4pc{
                      \ff{G}\ff{f} \dcellb{\ff{G}\alpha}   
		      &&
		      \theta_\ff{C}  \did 
		      \\
		       \ff{G}\ff{g} \dl
		       &&
		      \theta_\ff{C} \dr \ar@{}[dll]|{\theta_\ff{g}}
		      \\
		       \theta_{\ff{D}} 
		       && 
		      \ff{F}\ff{g} 
		      }}
      \vcenter{\xymatrix@C=-.4pc{\quad = \quad }}
      \vcenter{\xymatrix@C=-.4pc{
		      \ff{G}\ff{f} \dl
		      &&
		      \theta_\ff{C} \dr \ar@{}[dll]|{\theta_\ff{f}} 
		      \\
		      \theta_{\ff{D}} \did
		      &&
		      \ff{F}\ff{f} \dcellb{\ff{F}\alpha} 
		      \\ 
		      \theta_\ff{D}
		      &&
		      \ff{F}\ff{g} 
		      }}\ \ \ i.e.$$

$$\vcenter{\xymatrix@C=.5pc@R=.5pc{\ff{F}\ff{C} \ar[dd]_{\ff{F}\ff{g}} \ar[rrr]^{\theta_{\ff{C}}} 
			    &&& \ff{G}\ff{C} \ar@<-2ex>[dd]_{\ff{G}\ff{g}} \ar@<2ex>[dd]^{\ff{G}\ff{f}}
			    &&& & \ff{F}\ff{C} \ar@<-2ex>[dd]_{\ff{F}\ff{g}} \ar@<2ex>[dd]^{\ff{F}\ff{f}} \ar[rrr]^{\theta_{\ff{C}}} 
			    &&& \ff{G}\ff{C} \ar[dd]^{\ff{G}\ff{f}}\\
			    & \Downarrow \theta_\ff{g} 
			    && \stackrel{\ff{G}\alpha}\Leftarrow  
			    && = 
			    & & \stackrel{\ff{F}\alpha}\Leftarrow  
			    && \Downarrow \theta_\ff{f} 
			    & \\
			    \ff{F}\ff{D}\ar[rrr]_{\theta_{\ff{D}}}  
			    &&& \ff{G}\ff{D} 
			    &&&& \ff{F}\ff{D} \ar[rrr]_{\theta_{\ff{D}}} 
			    &&& \ff{G}\ff{D}}}$$  
\end{sinnadaitalica}

\begin{sinnadaitalica} {\bf 2-Natural. }\label{2espseudo}
A 2-natural transformation $\theta$ between 2-functors is a pseudonatural transformation such that $\theta_\ff{f}=id\ \forall \ff{f}\in \cc{C}$. Equivalently, it is a $\cc{C}at$-enriched natural transformation, that is, a natural transformation  between the functors determined by $\ff{F}$ and $\ff{G}$, such that for each 2-cell
$\ff{C}\cellrd{\ff{f}}{\alpha}{\ff{g}}\ff{D}$, the equation $\ff{G}\alpha\theta_{\ff{C}}=\theta_{\ff{D}}\ff{F}\alpha$ holds.  \cqd
\end{sinnadaitalica}

\begin{sinnadaitalica} {\bf Modification.}
Given 2-functors $\ff{F}$ and $\ff{G}$ from $\cc{C}$ to
$\cc{D}$, a \mbox{\emph{modification}}
$\ff{F} \cellrd{\theta}{\rho}{\eta} \ff{G}$ between pseudonatural transformations is a family
\mbox{$\{\theta_{\ff{C}}\Mr{\rho_{\ff{C}}}\eta_{\ff{C}}\}_{\ff{C}\in \cc{C}}$} of 2-cells of
$\cc{D}$ such that:
$$
  \forall \; \ff{C}\mr{\ff{f}} \ff{D} \in \cc{C},
\hspace{2ex}  \rho_{\ff{D}} \ff{Ff} \circ \theta_\ff{f} =
 \eta_\ff{f} \circ \ff{Gf} \rho_\ff{C}
$$

$$\vcenter{\xymatrix@C=-.4pc{
		      \ff{G}\ff{f} \dl 
		      && 
		      \theta_{\ff{C}} \dr \ar@{}[dll]|{\theta_\ff{f}} 
		      \\
		      \theta_\ff{D} \dcellb{\rho_\ff{D}}
		      && 
		      \ff{F}\ff{f} \did  
		      \\
		      \eta_\ff{D}
		      &&
		      \ff{F}\ff{f}
		      }}
      \vcenter{\xymatrix@C=-.4pc{\quad = \quad }}
      \vcenter{\xymatrix@C=-.4pc{
		      \ff{G}\ff{f} \did 
		      && 
		      \theta_\ff{C} \dcellb{\rho_\ff{C}} 
		      \\
		      \ff{G}\ff{f} \dl 
		      && 
		      \eta_\ff{C} \dr \ar@{}[dll]|{\eta_\ff{f}} 
		      \\
		      \eta_\ff{D}
		      && 
		      \ff{F}\ff{f}
		      }}\ \ \ i.e.$$

$$\vcenter{\xymatrix@R=.8pc@C=.5pc
  {
  \ff{F}\ff{C} \ar[rr]^{\theta_{\ff{C}}}   \ar[dd]_{\ff{F}\ff{f}} 
  &
  & \ff{G}\ff{C} \ar[dd]^{\ff{G}\ff{f}}
  && \ff{F}\ff{C}  \ar[dd]_{\ff{F}\ff{f}} \ar@<1.5ex>[rr]^{\theta_{\ff{C}}} \ar@<-1.5ex>[rr]_{\eta_{\ff{C}}} 
  &\Downarrow \rho_{\ff{C}}
  & \ff{G}\ff{C} \ar[dd]^{\ff{G}\ff{f}} 
  \\
  & \Downarrow \theta_\ff{f} 
  &&
  = 
  && \Downarrow \eta_\ff{f}
  & 
  \\
  \ff{F}\ff{D} \ar@<1.5ex>[rr]^{\theta_{\ff{D}}} \ar@<-1.5ex>[rr]_{\eta_{\ff{D}}}
  &\Downarrow \rho_{\ff{D}}
  & \ff{G}\ff{D} 
  && 
  \ff{F}\ff{D} \ar[rr]_{\eta_\ff{D}}
  & & 
  \ff{G}\ff{D}
  }}$$

As a particular case, we have \mbox{\emph{modifications}}  between 2-natural transformations, which are families of 2-cells as above satisfying $\rho_{\ff{D}}\ff{F}\ff{f}=\ff{G}\ff{f}\rho_{\ff{C}}$.
\end{sinnadaitalica}

\begin{sinnadaitalica} {\bf 2-Equivalence.} \label{2equivalencia}
A 2-functor $\cc{C}\mr{\ff{F}}\cc{D}$ is said to be a \emph{\mbox{2-equivalence} of \mbox{2-categories}} if there exists a 2-functor $\cc{D}\mr{\ff{G}} \cc{C}$ and invertible 2-natural \mbox{transformations} $\ff{F}\ff{G}\Mr{\alpha} id_\cc{D}$ and $\ff{G}\ff{F}\Mr{\beta} id_\cc{C}$. $\ff{G}$ is said to be a \emph{quasi-inverse} of $\ff{F}$, and it is determined up to invertible 2-natural transformations.
\end{sinnadaitalica}

\begin{proposition} [] \emph{\cite[1.11]{K1}} \label{eqsiif&f}
A 2-functor $\ff{F}:\cc{C} \mr{} \cc{D}$ is a
\mbox{2-equivalence} of 2-categories if and only if it is 2-fully-faithful and essentially surjective on objects.\cqd
\end{proposition}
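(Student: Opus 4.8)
The plan is to prove the two implications separately: the ``only if'' direction by the standard ``conjugate by the unit and counit'' argument, and the ``if'' direction by an explicit construction of a quasi-inverse.

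\textbf{($\Rightarrow$)} Suppose $\ff{F}$ is a $2$-equivalence with quasi-inverse $\ff{G}$ and invertible $2$-natural transformations $\ff{F}\ff{G}\Mr{\alpha}id_\cc{D}$ and $\ff{G}\ff{F}\Mr{\beta}id_\cc{C}$ (Definition \ref{2equivalencia}); here ``invertible'' forces every component $\alpha_\ff{D}$, $\beta_\ff{C}$ to be an invertible $1$-cell. Essential surjectivity on objects is then immediate, witnessed by $\alpha_\ff{D}\colon\ff{F}(\ff{G}\ff{D})\to\ff{D}$. For $2$-full-faithfulness I would observe that $2$-naturality of $\beta$ says exactly that $\ff{G}\ff{F}\ff{u}=\beta_{\ff{C}'}^{-1}\,\ff{u}\,\beta_\ff{C}$ and $\ff{G}\ff{F}\gamma=\beta_{\ff{C}'}^{-1}\,\gamma\,\beta_\ff{C}$ for every $1$-cell $\ff{u}$ and $2$-cell $\gamma$ of $\cc{C}(\ff{C},\ff{C}')$; hence the functor $(\ff{G}\ff{F})_{\ff{C},\ff{C}'}=\ff{G}_{\ff{F}\ff{C},\ff{F}\ff{C}'}\circ\ff{F}_{\ff{C},\ff{C}'}$ is a two-sided inverse of conjugation by $\beta$, so it is an isomorphism of categories. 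Symmetrically, using $\alpha$, the functor $\ff{F}_{\ff{G}\ff{D},\ff{G}\ff{D}'}\circ\ff{G}_{\ff{D},\ff{D}'}$ is an isomorphism for all $\ff{D},\ff{D}'$. Specializing the latter to $\ff{D}=\ff{F}\ff{C}$, $\ff{D}'=\ff{F}\ff{C}'$ shows $\ff{G}_{\ff{F}\ff{C},\ff{F}\ff{C}'}$ has a left inverse, while the former shows it has a right inverse; hence it is an isomorphism, and then the former forces $\ff{F}_{\ff{C},\ff{C}'}$ to be an isomorphism as well, i.e. $\ff{F}$ is $2$-fully-faithful.

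\textbf{($\Leftarrow$)} Suppose $\ff{F}$ is $2$-fully-faithful and essentially surjective on objects. Invoking the axiom of choice, for each object $\ff{D}$ of $\cc{D}$ pick an object $\ff{G}\ff{D}$ of $\cc{C}$ together with an invertible $1$-cell $\varepsilon_\ff{D}\colon\ff{F}(\ff{G}\ff{D})\to\ff{D}$. For each pair $\ff{D},\ff{D}'$, composing conjugation by $\varepsilon$ (an isomorphism $\cc{D}(\ff{D},\ff{D}')\to\cc{D}(\ff{F}\ff{G}\ff{D},\ff{F}\ff{G}\ff{D}')$) with the inverse of the isomorphism $\ff{F}_{\ff{G}\ff{D},\ff{G}\ff{D}'}$ (available since $\ff{F}$ is $2$-fully-faithful) yields a functor $\ff{G}_{\ff{D},\ff{D}'}\colon\cc{D}(\ff{D},\ff{D}')\to\cc{C}(\ff{G}\ff{D},\ff{G}\ff{D}')$; concretely, $\ff{G}\ff{g}$ is the unique $1$-cell with $\ff{F}(\ff{G}\ff{g})=\varepsilon_{\ff{D}'}^{-1}\,\ff{g}\,\varepsilon_\ff{D}$ and $\ff{G}\gamma$ the unique $2$-cell with $\ff{F}(\ff{G}\gamma)=\varepsilon_{\ff{D}'}^{-1}\,\gamma\,\varepsilon_\ff{D}$. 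A routine check, carried out by applying the faithful $2$-functor $\ff{F}$ and using interchange, shows that $\ff{G}$ strictly preserves identities and both compositions, so $\ff{G}\colon\cc{D}\to\cc{C}$ is a genuine $2$-functor. The defining equations rewrite as $\varepsilon_{\ff{D}'}\,\ff{F}\ff{G}\ff{g}=\ff{g}\,\varepsilon_\ff{D}$ and $\varepsilon_{\ff{D}'}\,\ff{F}\ff{G}\gamma=\gamma\,\varepsilon_\ff{D}$, which says precisely that $\varepsilon=\{\varepsilon_\ff{D}\}$ is an invertible $2$-natural transformation $\ff{F}\ff{G}\Mr{\varepsilon}id_\cc{D}$. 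For the other side, each $\varepsilon_{\ff{F}\ff{C}}\colon\ff{F}(\ff{G}\ff{F}\ff{C})\to\ff{F}\ff{C}$ is invertible in $\cc{D}$, and since $\ff{F}$ is $2$-fully-faithful it reflects invertibility of $1$-cells, so there is a unique (hence invertible) $1$-cell $\beta_\ff{C}\colon\ff{G}\ff{F}\ff{C}\to\ff{C}$ with $\ff{F}\beta_\ff{C}=\varepsilon_{\ff{F}\ff{C}}$; that $\beta=\{\beta_\ff{C}\}$ is $2$-natural is checked by applying the faithful $\ff{F}$ to the required equations and invoking $2$-naturality of $\varepsilon$ together with the definition of $\ff{G}$. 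Thus $\ff{G}$ is a quasi-inverse of $\ff{F}$, and $\ff{F}$ is a $2$-equivalence.

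The one place where real care is needed is the verification in the ``if'' direction that $\ff{G}$ strictly preserves the compositions and identities of both $1$- and $2$-cells: this is exactly where one uses that $\ff{F}$ induces \emph{isomorphisms} of hom-categories, not merely equivalences, so that the transported data is uniquely pinned down and the $2$-functoriality equations for $\ff{G}$ can be verified after applying the faithful $\ff{F}$. Everything else is straightforward diagram chasing, for which the elevator calculus is well suited. (Alternatively, the statement is the instance $\cc{V}=\cc{C}at$ of the corresponding result for $\cc{V}$-enriched categories; I prefer to give the argument directly.)
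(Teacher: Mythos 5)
Your proof is correct. Note, however, that the paper does not actually prove this statement: it is quoted from Kelly's enriched category theory (\cite[1.11]{K1}), i.e.\ it is the case $\cc{V}=\cc{C}at$ of the standard result that a $\cc{V}$-functor is a $\cc{V}$-equivalence iff it is $\cc{V}$-fully-faithful and essentially surjective, and the authors only allude later (before Corollary \ref{eqencadaCeseq}) to ``the usual lines (based in the axiom of choice) in the proof of \ref{eqsiif&f}''. Your argument is exactly that usual proof, spelled out directly in the 2-categorical language: in the ``only if'' direction the retract argument (split mono plus split epi from the two conjugation isomorphisms) correctly upgrades $\ff{G}_{\ff{F}\ff{C},\ff{F}\ff{C}'}$ and then $\ff{F}_{\ff{C},\ff{C}'}$ to isomorphisms of hom-categories, which is the right notion of 2-fully-faithful here (Definition \ref{2f&f}); and in the ``if'' direction you correctly use choice to pick $\ff{G}\ff{D}$ and invertible 1-cells $\varepsilon_\ff{D}$, exploit that the hom-functors of $\ff{F}$ are isomorphisms (not mere equivalences) to transport 1- and 2-cells uniquely, verify strict 2-functoriality of $\ff{G}$ after applying the faithful $\ff{F}$, and obtain invertible 2-natural transformations $\ff{F}\ff{G}\Mr{\varepsilon}id_\cc{D}$ and $\ff{G}\ff{F}\Mr{\beta}id_\cc{C}$ as in Definition \ref{2equivalencia}. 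You also correctly read ``essentially surjective'' as surjective up to invertible 1-cell (isomorphism), which is what the $\cc{C}at$-enriched statement requires, rather than up to equivalence; had you weakened this, the statement proved would have been a different (biequivalence-type) one. So the only difference from the paper is that you supply a self-contained proof where the paper defers to the enriched literature; both are legitimate, and your version has the merit of making explicit where choice and the strictness of 2-fully-faithfulness are used.
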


\begin{sinnadastandard} \label{2CAT}
It is well known that 2-categories, 2-functors and \mbox{2-natural} transformations form a 2-category
(which actually underlies a \mbox{3-category)} that we denote $2\hbox{-}\cc{CAT}$. Horizontal composition of \mbox{2-functors} and vertical composition of 2-natural transformations are the usual ones, and the horizontal composition of 2-natural transformations is defined by:
$$
\ff{C} \cellrd{\ff{F}}{\alpha}{\ff{G}} \ff{D}
\cellrd{\ff{F}'}{\alpha'}{\ff{G}'} \ff{E}
\;,\;\;
\;(\alpha'\alpha)_\ff{C} = \alpha'_{\ff{GC}}\circ \ff{F}'(\alpha_\ff{C}) \; ( = \ff{G}'(\alpha_\ff{C}) \circ \alpha'_{\ff{FC}}).
$$
\end{sinnadastandard}

\begin{definition}\label{ccHom}
Given two 2-categories $\cc{C}$ and $\cc{D}$, we consider two \mbox{2-categories} defined as follows:

\vspace{1ex}

$\cc{H}om(\cc{C},\cc{D})$: 2-functors and 2-natural transformations.

\vspace{1ex}

$\cc{H}om_p(\cc{C},\cc{D})
$: 2-functors and pseudonatural transformations.

\vspace{1ex}

In both cases the 2-cells are the modifications. To define compositions we draw the basic 2-category diagram:
$$
\xymatrix@R0.5ex
        {
           {\;\;} \ar[rr]^{\theta}
         & {\;\;}
         & {\;\;} \ar[rr]^{\theta'}
         & {\;\;}
         & {\;\;}
         \\
           {\;\;}
         & {\Downarrow}{\rho}
         & {\;\;}
         & {\Downarrow}{\rho'}
         & {\;\;}
         & {\;\;}
         \\
           {\ff{F}} \ar[rr]^{\eta}
         & {\;\;}
         & {\ff{G}} \ar[rr]^{\eta'}
         & {\;\;}
         & {\ff{H}}
         \\
           {\;\;}
         & {\Downarrow}{\,\varepsilon}
         & {\;\;}
         & {\Downarrow}{\,\varepsilon'}
         & {\;\;}
         & {\;\;}
         \\
           {\;\;} \ar[rr]^{\mu}
         & {\;\;}
         & {\;\;} \ar[rr]^{\mu'}
         & {\;\;}
         & {\;\;}
        }
\hspace{3ex}
\xymatrix@R2.5ex
     {
      (\theta'\theta)_{\ff{C}} =
                        \theta'_{\ff{C}}\theta_{\ff{C}}
     \\
      (\rho'\rho)_{\ff{C}}=\rho'_{\ff{C}} \rho_{\ff{C}}
     \\
      (\epsilon\circ \rho)_{\ff{C}} =
                  \epsilon_{\ff{C}} \circ \rho_{\ff{C}}
     }
$$
It is straightforward to check that these definitions determine a \mbox{2-category} structure. \cqd
\end{definition}
\begin{remark}[] \cite[I,4.2.]{GRAY} \label{evaluation}
Evaluation determines a \emph{quasifunctor}
\mbox{$\cc{H}om_p(\cc{C},\cc{D}) \times \cc{C} \mr{ev} \cc{D}$} (in the sense of \cite[I,4.1.]{GRAY}, in particular, fixing a variable, it is a 2-functor in the other).
 In the strict case $\cc{H}om$, evaluation is actually a \mbox{2-bifunctor.} \cqd
\end{remark}

\begin{remark}[]\cite[I,4.2]{GRAY} \label{Homisbifunctor}
Both constructions $\cc{H}om$ and $\cc{H}om_p$ \mbox{determine} a bifunctor
\mbox{$2\hbox{-}\cc{CAT}^{op} \times 2\hbox{-}\cc{CAT} \mr{} 2\hbox{-}\cc{CAT}$.} Given 2-functors
$\cc{C}' \mr{\ff{H}_0} \cc{C}$ and $\cc{D} \mr{\ff{H}_1} \cc{D}'$, and $\ff{F}\cellrd{\theta}{\rho}{\eta}\ff{G}$  in $\cc{H}om_\epsilon(\cc{C},\cc{D})(\ff{F},\ff{G})$, the definition
$\cc{H}om_\epsilon(\ff{H}_0, \ff{H}_1)(\ff{F}\cellrd{\theta}{\rho}{\eta}\ff{G}) = \ff{H}_1\ff{F}\ff{H}_0\cellrd{\ff{H}_1\theta\ff{H}_0}{\ff{H}_1\rho\ff{H}_0}{\ff{H}_1\eta\ff{H}_0}\ff{H}_1\ff{G}\ff{H}_0$ determines a functor
$\cc{H}om_\epsilon(\cc{C}, \cc{D})(\ff{F}, \ff{G}) \mr{}
\cc{H}om_\epsilon(\cc{C}', \cc{D}')(\ff{H}_1\ff{F}\ff{H}_0, \ff{H}_1\ff{G}\ff{H}_0)$, and this assignation is bifunctorial in the variable $(\cc{C}, \cc{D})$ (here $\cc{H}om_\epsilon$ denotes either $\cc{H}om$ or $\cc{H}om_p$).
\end{remark}

If $\cc{C}$ and $\cc{D}$ are 2-categories, the product 2-category
$\cc{C} \times \cc{D}$ is constructed in the usual way, and this together with the
\mbox{2-category } $\cc{H}om(\cc{C},\cc{D})$ determine a
symmetric cartesian closed structure as follows (see \cite[chapter 2]{K1} or \mbox{\cite[I,2.3.]{GRAY}):}

\begin{proposition}  \label{cartesianclosed}  The usual definitions determine an isomorphism of
\mbox{2-categories} :

$$
\cc{H}om(\cc{C},\,\cc{H}om(\cc{D},\,\cc{A})) \mr{\cong}
\cc{H}om(\cc{C} \times \cc{D},\,\cc{A}).
$$
Composing with the symmetry
$\cc{C} \times \cc{D} \mr{\cong} \cc{D} \times \cc{C}$ yields an isomorphism:
$$
\cc{H}om(\cc{C},\,\cc{H}om(\cc{D},\,\cc{A})) \mr{\cong}
\cc{H}om(\cc{D},\,\cc{H}om(\cc{C},\,\cc{A})).
$$

\vspace{-4ex}

\cqd
\end{proposition}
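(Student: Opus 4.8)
The plan is to exhibit the isomorphism explicitly through the usual uncurrying (``exponential law'') and then check that it is a bijection at the level of objects, $1$-cells and $2$-cells of the two $\cc{H}om$-$2$-categories and is strictly $2$-functorial; the second isomorphism follows formally from the first and from the symmetry of $\times$. Conceptually this is just the cartesian closed structure that $2\hbox{-}\cc{CAT}$ inherits from the cartesian closed (and complete) base $\cc{C}at$, but I will follow the concrete verification suggested by the references.

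First I would set up the comparison on objects. Given a $2$-functor $\ff{F}\colon\cc{C}\mr{}\cc{H}om(\cc{D},\cc{A})$, define $\widetilde{\ff{F}}\colon\cc{C}\times\cc{D}\mr{}\cc{A}$ by $\widetilde{\ff{F}}(\ff{C},\ff{D})=\ff{F}(\ff{C})(\ff{D})$ on objects, by $\widetilde{\ff{F}}(\ff{f},\ff{g})=\ff{F}(\ff{f})_{\ff{D}_1}\,\ff{F}(\ff{C}_0)(\ff{g})$ on a $1$-cell $(\ff{f},\ff{g})\colon(\ff{C}_0,\ff{D}_0)\mr{}(\ff{C}_1,\ff{D}_1)$ (where $\ff{F}(\ff{f})\colon\ff{F}(\ff{C}_0)\Mr{}\ff{F}(\ff{C}_1)$ is the $2$-natural transformation and $\ff{F}(\ff{f})_{\ff{D}_1}$ its component at $\ff{D}_1$), and by $\widetilde{\ff{F}}(\alpha,\beta)=\ff{F}(\alpha)_{\ff{D}_1}\,\ff{F}(\ff{C}_0)(\beta)$ on a $2$-cell $(\alpha,\beta)$ (here $\ff{F}(\alpha)$ is the modification $\ff{F}(\ff{f})\Mr{}\ff{F}(\ff{f}')$, and the juxtaposition is horizontal composition of $2$-cells, arranged into a composable configuration with the elevator calculus). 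Conversely, from $\ff{G}\colon\cc{C}\times\cc{D}\mr{}\cc{A}$ define $\widehat{\ff{G}}\colon\cc{C}\mr{}\cc{H}om(\cc{D},\cc{A})$ by $\widehat{\ff{G}}(\ff{C})=\ff{G}(\ff{C},-)$, $\widehat{\ff{G}}(\ff{f})_{\ff{D}}=\ff{G}(\ff{f},id_{\ff{D}})$ and $\widehat{\ff{G}}(\alpha)_{\ff{D}}=\ff{G}(\alpha,id_{\ff{D}})$. The routine but essential checks at this stage are: (i) that $\widetilde{\ff{F}}$ preserves $1$-cell composition — in particular that the two factorizations $(\ff{f},\ff{g})=(\ff{f},id_{\ff{D}_1})(id_{\ff{C}_0},\ff{g})=(id_{\ff{C}_1},\ff{g})(\ff{f},id_{\ff{D}_0})$ are carried to the same arrow, which is exactly the naturality square of the $\cc{C}at$-natural transformation $\ff{F}(\ff{f})$ evaluated at $\ff{g}$ — and the analogous statement for $2$-cells, which additionally uses the full $2$-naturality of $\ff{F}(\ff{f})$ and the middle-four interchange (\ref{basicelevator}); (ii) that $\widehat{\ff{G}}(\ff{f})$ is $2$-natural and $\widehat{\ff{G}}(\alpha)$ a modification, which reduce to functoriality of $\ff{G}$ on the product; (iii) that $\widetilde{\ff{F}}$ and $\widehat{\ff{G}}$ really are $2$-functors (identities and horizontal/vertical composition), by direct computation from the $2$-functoriality of $\ff{F}$, of each $\ff{F}(\ff{C})$, and of $\ff{G}$; and (iv) that $\widehat{\widetilde{\ff{F}}}=\ff{F}$ and $\widetilde{\widehat{\ff{G}}}=\ff{G}$ on the nose.

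Next I would extend the correspondence to $1$-cells and $2$-cells of the $\cc{H}om$-$2$-categories. A $2$-natural transformation $\theta\colon\ff{F}\Mr{}\ff{F}'$ in $\cc{H}om(\cc{C},\cc{H}om(\cc{D},\cc{A}))$ has components $\theta_{\ff{C}}$ which are themselves $2$-natural transformations $\ff{F}(\ff{C})\Mr{}\ff{F}'(\ff{C})$, hence families of arrows $(\theta_{\ff{C}})_{\ff{D}}$ of $\cc{A}$; set $\widetilde{\theta}_{(\ff{C},\ff{D})}=(\theta_{\ff{C}})_{\ff{D}}$. Unwinding $2$-naturality of $\widetilde{\theta}$ along a product $1$-cell $(\ff{f},\ff{g})$, via the factorization above, splits it into the modification equation expressing $2$-naturality of $\theta$ at $\ff{f}$ (evaluated at $\ff{D}_1$) together with the $2$-naturality of $\theta_{\ff{C}_0}$ at $\ff{g}$, and the $2$-cell clause is the same argument; similarly a modification $\rho$ from $\theta$ to $\theta'$ has components $\rho_{\ff{C}}$ that are modifications, yielding families $(\rho_{\ff{C}})_{\ff{D}}$, and $\widetilde{\rho}_{(\ff{C},\ff{D})}=(\rho_{\ff{C}})_{\ff{D}}$ satisfies the modification axiom by the same unwinding. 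Since all of these assignments on components are literal reindexings along $(\ff{C},\ff{D})\mapsto(\ff{C},\ff{D})$, they are bijective, and preservation of the two vertical and the two horizontal compositions and of all identities is immediate from Definition \ref{ccHom} and \ref{2CAT}. This shows that $\ff{F}\mapsto\widetilde{\ff{F}}$ extends to a $2$-functor $\cc{H}om(\cc{C},\cc{H}om(\cc{D},\cc{A}))\mr{}\cc{H}om(\cc{C}\times\cc{D},\cc{A})$ with strict inverse $\ff{G}\mapsto\widehat{\ff{G}}$ extended in the mirror fashion, hence an isomorphism of $2$-categories. For the second isomorphism: the symmetry $\cc{C}\times\cc{D}\mr{\cong}\cc{D}\times\cc{C}$ is an isomorphism of $2$-categories, so by functoriality of $\cc{H}om$ in its first variable (Remark \ref{Homisbifunctor}) it induces an isomorphism $\cc{H}om(\cc{D}\times\cc{C},\cc{A})\mr{\cong}\cc{H}om(\cc{C}\times\cc{D},\cc{A})$; composing the first isomorphism with this, and then with the inverse of the first isomorphism applied with the roles of $\cc{C}$ and $\cc{D}$ interchanged, yields $\cc{H}om(\cc{C},\cc{H}om(\cc{D},\cc{A}))\mr{\cong}\cc{H}om(\cc{D},\cc{H}om(\cc{C},\cc{A}))$.

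I expect no deep difficulty: the content is entirely the standard exponential law for the strict functor $2$-categories. The one place where genuine care is needed is step (i) above — verifying that the uncurried $\widetilde{\ff{F}}$ respects the $2$-cell structure of the product $2$-category — because there the two admissible factorizations of a product $1$-cell (and of a product $2$-cell) must be reconciled, and this is precisely where the full strength of $2$-naturality of the transformations $\ff{F}(\ff{f})$, rather than mere naturality, is used; the elevator calculus identity (\ref{basicelevator}) is the tool that makes this bookkeeping transparent.
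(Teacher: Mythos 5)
Your proposal is correct and is exactly the standard explicit verification of "the usual definitions" that the paper itself leaves to the cited references (Kelly, ch.~2, and Gray, I.2.3); the currying/uncurrying assignments, the identification of well-definedness with the ($2$-)naturality of the transformations $\ff{F}(\ff{f})$, and the reduction of the second isomorphism to the symmetry of the product all match the intended argument. No gaps.
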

We use the following notation:

{\bf Notation:}
Let $\cc{C}$ be a 2-category, $\ff{C}\in \cc{C}$ and $\ff{D}\scellrd{\ff{f}}{\alpha}{\ff{g}}\ff{E}\in \cc{C}$.

\noindent \begin{enumerate}
  \item $\ff{f}_*$:  $\;\cc{C}(\ff{C},\ff{D})\mr{\ff{f}_*} \cc{C}(\ff{C},\ff{E})$, $\ff{f}_*(\ff{h} \mr{\beta} \ff{h}') = (\ff{f}\ff{h} \mr{\ff{f}\beta} \ff{f}\ff{h}')$.
						
  \item $\ff{f}^*$: $\;\cc{C}(\ff{E},\ff{C})\mr{\ff{f}^*} \cc{C}(\ff{D},\ff{C})$, $\ff{f}^*(\ff{h} \mr{\beta} \ff{h}') = (\ff{h}\ff{f} \mr{\beta \ff{f}}\ff{h}'\ff{f})$.

  \item $\alpha_*$: $\;\ff{f}_*\Mr{\alpha_*} \ff{g}_*$,
                   $(\alpha_*)_\ff{h}=\alpha \ff{h}$.

  \item $\alpha^*$: $\;\ff{f}^*\Mr{\alpha^*} \ff{g}^*$
                   $(\alpha^*)_\ff{h}=\ff{h}\alpha $.

  \item $\cc{C} \mr{\cc{C}(\ff{C},-)} \cc{C}at$:
$\cc{C}(\ff{C},-)(\ff{D}\scellrd{\ff{f}}{\alpha}{\ff{g}}\ff{E})  \;=\;  (\cc{C}(\ff{C},\ff{D})\scellrd{\ff{f}_*}{\alpha_*}{\ff{g}_*}\cc{C}(\ff{C},\ff{E}))$.

  \item $\cc{C}^{op} \mr{\cc{C}(-,\ff{C})} \cc{C}at$:
$\cc{C}(-,\ff{C})(\ff{D}\scellrd{\ff{f}}{\alpha}{\ff{g}}\ff{E})  \;=\;  (\cc{C}(\ff{D},\ff{C})\scellrd{\ff{f}^*}{\alpha^*}{\ff{g}^*}\cc{C}(\ff{E},\ff{C}))$.
						
   \item We will also denote by $\ff{f}^*$ the 2-natural transformation from \mbox{$\cc{C}(\ff{E},-)$ to $\cc{C}(\ff{D},-)$} defined by $(\ff{f}^*)_\ff{C}=\ff{f}^*$.

   \item We will also denote by $\ff{f}_*$ the 2-natural transformation from \mbox{$\cc{C}(-,\ff{D})$ to $\cc{C}(-,\ff{E})$} defined by $(\ff{f}_*)_\ff{C}=\ff{f}_*$.

    \item We will also denote by $\alpha^*$ the modification from $\ff{f}^*$ to $\ff{g}^*$ defined by $(\alpha^*)_\ff{C}=\alpha^*$.

    \item We will also denote by $\alpha_*$ the modification from $\ff{f}_*$ to $\ff{g}_*$ defined by $(\alpha_*)_\ff{C}=\alpha_*$.
 \cqd \end{enumerate}

\begin{sinnadaitalica} \label{Y2functor}
Given a locally small 2-category $\cc{C}$, the \emph{Yoneda \mbox{2-functors}} are the following (note that each one is the other for the dual 2-category):

\vspace{1ex}

a. $\cc{C} \mr{\ff{y}^{(-)}} \cc{H}om(\cc{C},\cc{C}at)^{op}$,
$\ff{y}^{\ff{C}} = \cc{C}(\ff{C},-)$,
$\ff{y}^{\ff{f}} = \ff{f}^*$
$\ff{y}^{\alpha} = \alpha^*$.

b. $\cc{C} \mr{\ff{y}_{(-)}} \cc{H}om(\cc{C}^{op},\cc{C}at)$,
$\ff{y}_{\ff{C}} = \cc{C}(-,\ff{C})$,
$\ff{y}_{\ff{f}} = \ff{f}_*$
$\ff{y}_{\alpha} = \alpha_*$.
\end{sinnadaitalica}

 Recall the Yoneda Lemma for enriched categories over $\cc{C}at$. We consider explicitly only the case $a.$ in
\ref{Y2functor}.

\begin{proposition}[\textbf{Yoneda lemma}]\label{2Yoneda}

Given a locally small \mbox{2-category} $\cc{C}$, a 2-functor
$\ff{F}:\cc{C} \mr{} \cc{C}at$ and an object $\ff{C}\in \cc{C}$, there is an isomorphism of categories, natural in $\ff{F}$.
$$
\xymatrix@R=.5pc@C=4pc{\cc{H}om(\cc{C},\cc{C}at)(\cc{C}(\ff{C},-),\ff{F}) \comw{X} \ar[r]^<<<<<<<h & \comw{XXXXX} \ff{F}\ff{C} \comw{XXXXX} \\ }
$$

\vspace{-5ex}

$$
\xymatrix@R=.5pc@C=1pc{\comw{XXXX} \theta \ar[rr]^\rho && \eta \comw{XXXX} \ar@{|->}[rrr] &&& \comw{X} \theta_\ff{C}(id_\ff{C}) \ar[rr]^{(\rho_\ff{C})_{id_\ff{C}}} && \eta_\ff{C}(id_\ff{C})}.
$$

\end{proposition}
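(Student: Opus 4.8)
The plan is to recognize this as the $\cc{C}at$-enriched Yoneda lemma for the representable $\cc{C}(\ff{C},-)$ and to produce an explicit two-sided inverse $\Psi$ to the functor $\Phi := h$ given by $\theta \mapsto \theta_\ff{C}(id_\ff{C})$ on objects and $\rho \mapsto (\rho_\ff{C})_{id_\ff{C}}$ on modifications. For an object $x \in \ff{F}\ff{C}$ I would define a $2$-natural transformation $\theta^x \colon \cc{C}(\ff{C},-) \Mr{} \ff{F}$ whose component $\theta^x_\ff{D} \colon \cc{C}(\ff{C},\ff{D}) \mr{} \ff{F}\ff{D}$ sends an arrow $\ff{f} \colon \ff{C} \mr{} \ff{D}$ to $\ff{F}\ff{f}(x) \in \ff{F}\ff{D}$ and a 2-cell $\alpha \colon \ff{f} \Mr{} \ff{g}$ to the component at $x$ of the natural transformation $\ff{F}\alpha \colon \ff{F}\ff{f} \Mr{} \ff{F}\ff{g}$; and for a morphism $\phi \colon x \mr{} y$ in $\ff{F}\ff{C}$ I would define a modification $\rho^\phi \colon \theta^x \Mr{} \theta^y$ with components $(\rho^\phi_\ff{D})_\ff{f} = \ff{F}\ff{f}(\phi)$. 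The claim to prove is then that $x \mapsto \theta^x$, $\phi \mapsto \rho^\phi$ is a functor $\Psi \colon \ff{F}\ff{C} \mr{} \cc{H}om(\cc{C},\cc{C}at)(\cc{C}(\ff{C},-),\ff{F})$ inverse to $\Phi$, and that this is natural in $\ff{F}$.

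First I would check $\Psi$ is well defined, which all comes down to $\ff{F}$ being a \emph{strict} 2-functor. Each $\theta^x_\ff{D}$ is a functor because $\ff{F}$ preserves identity 2-cells and vertical composition. The arrow-level naturality $\ff{F}\ff{u} \circ \theta^x_\ff{D} = \theta^x_\ff{E} \circ \ff{u}_*$, for $\ff{u} \colon \ff{D} \mr{} \ff{E}$, holds on objects because $\ff{F}$ preserves horizontal composition of arrows and on 2-cells because $\ff{F}$ preserves left whiskering; and the additional equation that distinguishes a $2$-natural from a merely pseudonatural transformation (the condition of \ref{2espseudo}), namely $\ff{F}\gamma \cdot \theta^x_\ff{D} = \theta^x_\ff{E} \cdot \gamma_*$ for a 2-cell $\gamma \colon \ff{u} \Mr{} \ff{v}$, reduces after passing to components to $(\ff{F}\gamma)_{\ff{F}\ff{f}(x)} = (\ff{F}(\gamma\ff{f}))_x$, which is exactly preservation of whiskering by $\ff{F}$. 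The modification axiom for $\rho^\phi$ and the functoriality of $\Psi$ (that $\rho^{id_x} = id_{\theta^x}$ and $\rho^{\psi\phi} = \rho^\psi \circ \rho^\phi$) are then immediate from the componentwise formulas together with functoriality of the individual $\ff{F}\ff{f}$.

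Next I would verify $\Phi$ and $\Psi$ are mutually inverse. One direction is a one-line computation: $\Phi\Psi(x) = \theta^x_\ff{C}(id_\ff{C}) = \ff{F}(id_\ff{C})(x) = x$ since $\ff{F}(id_\ff{C}) = id_{\ff{F}\ff{C}}$. For the converse, given a $2$-natural $\theta$ I evaluate its arrow-level naturality $\ff{F}\ff{f} \circ \theta_\ff{C} = \theta_\ff{D} \circ \ff{f}_*$ (for $\ff{f} \colon \ff{C} \mr{} \ff{D}$) at $id_\ff{C}$ to get $\ff{F}\ff{f}(\theta_\ff{C}(id_\ff{C})) = \theta_\ff{D}(\ff{f})$, so $\theta^{\Phi(\theta)}_\ff{D}$ agrees with $\theta_\ff{D}$ on objects; evaluating its whiskering equation at $id_\ff{C}$ for a 2-cell $\alpha \colon \ff{f} \Mr{} \ff{g}$ gives $\theta_\ff{D}(\alpha) = (\ff{F}\alpha)_{\theta_\ff{C}(id_\ff{C})}$, so they agree on 2-cells, whence $\Psi\Phi(\theta) = \theta$. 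The same ``evaluate at $id_\ff{C}$'' step applied to the defining equation of a modification yields $\Psi\Phi(\rho) = \rho$. Thus $\Phi$ is an isomorphism of categories with inverse $\Psi$.

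Finally, naturality in $\ff{F}$ is essentially free: for $\sigma \colon \ff{F} \Mr{} \ff{F}'$ in $\cc{H}om(\cc{C},\cc{C}at)$ the horizontal composite satisfies $(\sigma\theta)_\ff{C} = \sigma_\ff{C} \circ \theta_\ff{C}$ on objects (and the analogous componentwise identity on 2-cells), so $\Phi_{\ff{F}'}(\sigma\theta) = \sigma_\ff{C}(\theta_\ff{C}(id_\ff{C})) = \sigma_\ff{C}(\Phi_\ff{F}(\theta))$, which is exactly the commuting square expressing naturality. I do not expect a real obstacle — this is textbook enriched Yoneda, cf. \cite{K1} — the one point requiring a little care is to split the $\cc{C}at$-enriched $2$-naturality condition into its object-level part (preservation of horizontal composition) and its 2-cell/whiskering part, and to make sure the latter, which is precisely what forces $\theta^x$ to be strictly $2$-natural rather than pseudonatural, is used exactly where it is needed.
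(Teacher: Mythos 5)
Your proof is correct and follows essentially the same route as the paper: the inverse $\Psi$ you construct is exactly the paper's $\ell$, with $(\ell\, x)_\ff{D}(\ff{f}\Mr{\alpha}\ff{g}) = \ff{F}\ff{f}(x)\mr{(\ff{F}\alpha)_x}\ff{F}\ff{g}(x)$ and $((\ell\,\phi)_\ff{D})_\ff{f}=\ff{F}\ff{f}(\phi)$, the paper simply leaving to the reader the strictness verifications you spell out. Your extra care in isolating where strict $2$-naturality (the whiskering condition of \ref{2espseudo}) is used is a fine, if implicit in the paper, elaboration.
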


\vspace{1ex}

\begin{proof}
The application $h$ has an inverse
$$
\xymatrix@R=.5pc@C=4pc{\comw{} \ff{F}\ff{C} \comw{X,,} \ar[r]^>>>>>>>>{\ell} & \comw{X} \cc{H}om(\cc{C},\cc{C}at)(\cc{C}(\ff{C},-),\ff{F})}
$$

\vspace{-4ex}

$$
\xymatrix@R=.5pc@C=1pc{C  \ar[r]^f & D \ \ar@{|->}[rrr] &&& \comw{XX} \ell C \ar[rr]^{\ell f} && \ell D \comw{XX}}
$$

\noindent where $(\ell C)_\ff{D}(\ff{f}\Mr{\alpha}\ff{g})=\ff{F}\ff{f}(C)\mr{(\ff{F}\alpha)_C} \ff{F}\ff{g}(C)$ and $((\ell f)_\ff{D})_\ff{f}=\ff{F}\ff{f}(f)$.
\end{proof}
\begin{corollary} \label{yonedaff}
The Yoneda 2-functors in \ref{Y2functor} are 2-fully-faithful.
\hfill\cqd
\end{corollary}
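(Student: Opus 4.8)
The plan is to derive 2-full-faithfulness directly from the Yoneda lemma (Proposition \ref{2Yoneda}). By the remark in \ref{Y2functor} that each of the two Yoneda 2-functors is the other one for the dual 2-category, it suffices to treat case $a.$, namely $\cc{C} \mr{\ff{y}^{(-)}} \cc{H}om(\cc{C},\cc{C}at)^{op}$. Fix objects $\ff{C},\ff{D}\in\cc{C}$. Unwinding the definition of the dual 2-category, the relevant hom-category of the codomain is
$$
\cc{H}om(\cc{C},\cc{C}at)^{op}(\ff{y}^{\ff{C}},\ff{y}^{\ff{D}}) \;=\; \cc{H}om(\cc{C},\cc{C}at)(\cc{C}(\ff{D},-),\,\cc{C}(\ff{C},-)),
$$
so what must be shown is that the functor $\ff{y}^{(-)}_{\ff{C},\ff{D}}\colon \cc{C}(\ff{C},\ff{D}) \mr{} \cc{H}om(\cc{C},\cc{C}at)(\cc{C}(\ff{D},-),\,\cc{C}(\ff{C},-))$ is an isomorphism of categories.

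Next I would apply Proposition \ref{2Yoneda} with $\ff{F}=\cc{C}(\ff{C},-)$ and with the object $\ff{D}$ (playing the role of ``$\ff{C}$'' in that statement), obtaining an isomorphism of categories
$$
h\colon \cc{H}om(\cc{C},\cc{C}at)(\cc{C}(\ff{D},-),\,\cc{C}(\ff{C},-)) \mr{} \cc{C}(\ff{C},-)(\ff{D}) \;=\; \cc{C}(\ff{C},\ff{D}).
$$
The key step is then a short computation showing that $h\circ \ff{y}^{(-)}_{\ff{C},\ff{D}}$ is the identity functor of $\cc{C}(\ff{C},\ff{D})$. For this I simply substitute the explicit formulas: by the Notation preceding \ref{Y2functor}, an arrow $\ff{C}\mr{\ff{f}}\ff{D}$ is sent by $\ff{y}^{(-)}$ to $\ff{y}^{\ff{f}}=\ff{f}^*$ and a 2-cell $\ff{f}\Mr{\alpha}\ff{g}$ to $\ff{y}^{\alpha}=\alpha^*$, while by \ref{2Yoneda} the map $h$ sends a (2-natural) transformation $\theta$ to $\theta_{\ff{D}}(id_{\ff{D}})$ and a modification $\rho$ to $(\rho_{\ff{D}})_{id_{\ff{D}}}$. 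Hence $h(\ff{f}^*)=(\ff{f}^*)_{\ff{D}}(id_{\ff{D}})=id_{\ff{D}}\,\ff{f}=\ff{f}$ and $h(\alpha^*)=((\alpha^*)_{\ff{D}})_{id_{\ff{D}}}=id_{\ff{D}}\,\alpha=\alpha$, so the composite is the identity on both objects and morphisms of $\cc{C}(\ff{C},\ff{D})$.

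From this it follows that $\ff{y}^{(-)}_{\ff{C},\ff{D}}=h^{-1}$ is an isomorphism of categories, which is precisely the assertion that $\ff{y}^{(-)}$ is 2-fully-faithful; applying the same argument to $\cc{C}^{op}$ yields the statement for $\ff{y}_{(-)}$. I do not expect a genuine obstacle here: the proof is a matter of correctly handling the ${}^{op}$ convention on the codomain 2-category and then matching the explicit description of $h$ from the Yoneda lemma against the explicit definitions of $\ff{f}^*$ and $\alpha^*$. The only point that requires a little care is verifying that the composite $h\circ \ff{y}^{(-)}_{\ff{C},\ff{D}}$ is \emph{literally} the identity functor (not merely naturally isomorphic to it), since it is this that upgrades ``$h$ is an isomorphism'' to ``$\ff{y}^{(-)}_{\ff{C},\ff{D}}$ is an isomorphism of categories.''
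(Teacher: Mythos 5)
Your proposal is correct and is exactly the argument the paper intends: the corollary is stated with no written proof precisely because it follows from Proposition \ref{2Yoneda} by the routine verification you carry out, namely that the Yoneda isomorphism $h$ composed with $\ff{y}^{(-)}_{\ff{C},\ff{D}}$ is literally the identity of $\cc{C}(\ff{C},\ff{D})$, so that $\ff{y}^{(-)}_{\ff{C},\ff{D}}=h^{-1}$ is an isomorphism of categories. Your attention to the ${}^{op}$ convention and to the distinction between ``isomorphic to the identity'' and ``equal to the identity'' is exactly the right care to take.
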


Beyond the theory of $\cc{C}at$-enriched categories, the lemma also holds for pseudonatural transformations in the following way:
\begin{proposition}[\textbf{Pseudo Yoneda lemma}]\label{pseudoYoneda}

Given a locally small \mbox{2-category} $\cc{C}$, a 2-functor
$\ff{F}:\cc{C} \mr{} \cc{C}at$ and an object $\ff{C}\in \cc{C}$, there is an  equivalence of categories, natural in $\ff{F}$.
$$
\xymatrix@R=.5pc@C=4pc
    {
      \cc{H}om_p(\cc{C},\cc{C}at)(\cc{C}(\ff{C},-),\ff{F}) \comw{X}
                                                       \ar[r]^<<<<<<<{\tilde{h}}
    & \comw{XXXXX} \ff{F}\ff{C} \comw{XXXXX}
    \\
    }
$$

\vspace{-5ex}

$$
\xymatrix@R=.5pc@C=1pc
   {
       \comw{XXX} \theta \ar[rr]^{\rho}
    && \eta \comw{XXXXX} \ar@{|->}[rrr]
   &&& \comw{X} \theta_\ff{C}(id_\ff{C}) \ar[rr]^{(\rho_\ff{C})_{id_\ff{C}}}
    && \eta_{\ff{C}}(id_\ff{C})
   }
$$
\noindent Furthermore, the quasi-inverse $\tilde{\ell}$ is a section of $\tilde{h}$, $\tilde{h}\, \tilde{\ell} = id$.

\end{proposition}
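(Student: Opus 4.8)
The plan is to bootstrap from the strict Yoneda Lemma, Proposition~\ref{2Yoneda}, using only the formal fact that a $2$-natural transformation is a special pseudonatural transformation (one all of whose structure $2$-cells are identities) and that a modification of $2$-natural transformations is in particular a modification of pseudonatural transformations. Write $h=h^{\ff F}$ and $\ell=\ell^{\ff F}$ for the mutually inverse functors of~\ref{2Yoneda}, and let $j^{\ff F}\colon \cc{H}om(\cc{C},\cc{C}at)(\cc{C}(\ff C,-),\ff F)\mr{}\cc{H}om_p(\cc{C},\cc{C}at)(\cc{C}(\ff C,-),\ff F)$ be the resulting inclusion functor. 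First I would check the routine fact that the very same assignments $\theta\mapsto\theta_{\ff C}(id_{\ff C})$ and $\rho\mapsto(\rho_{\ff C})_{id_{\ff C}}$ define a functor $\tilde h=\tilde h^{\ff F}$ on all of $\cc{H}om_p(\cc{C},\cc{C}at)(\cc{C}(\ff C,-),\ff F)$; by construction $\tilde h\circ j^{\ff F}=h$. I then set $\tilde\ell:=j^{\ff F}\circ\ell$ as the candidate quasi-inverse, and the section identity is immediate: $\tilde h\,\tilde\ell=\tilde h\,j^{\ff F}\,\ell=h\,\ell=id_{\ff F\ff C}$. Naturality of $\tilde h$ in $\ff F$ is likewise immediate (post-composing a pseudonatural transformation $\ff F\Rightarrow\ff F'$ commutes with evaluation at $id_{\ff C}$), or may simply be inherited from~\ref{2Yoneda} through $j$.

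The substance of the proof is the construction of an invertible modification $\varepsilon_\theta\colon\tilde\ell\,\tilde h(\theta)\Rightarrow\theta$, natural in $\theta$. Given a pseudonatural $\theta\colon\cc{C}(\ff C,-)\Rightarrow\ff F$, put $C_0:=\theta_{\ff C}(id_{\ff C})$, so that $\theta':=\tilde\ell\tilde h(\theta)=\ell(C_0)$ is the $2$-natural transformation with $\theta'_{\ff D}(\ff f)=\ff F\ff f(C_0)$, $\theta'_{\ff D}(\alpha)=(\ff F\alpha)_{C_0}$ and all structure $2$-cells trivial. For each object $\ff D$ I would define the natural transformation $(\varepsilon_\theta)_{\ff D}\colon\theta'_{\ff D}\Rightarrow\theta_{\ff D}$ of functors $\cc{C}(\ff C,\ff D)\to\ff F\ff D$ by the components
$$\bigl((\varepsilon_\theta)_{\ff D}\bigr)_{\ff f}:=(\theta_{\ff f})_{id_{\ff C}}\colon \ff F\ff f(C_0)\mr{}\theta_{\ff D}(\ff f),\qquad \ff f\colon\ff C\to\ff D,$$
where $\theta_{\ff f}$ denotes the structure $2$-cell of $\theta$ at $\ff f$ (a natural transformation $\ff F\ff f\circ\theta_{\ff C}\Rightarrow\theta_{\ff D}\circ\ff f_*$), whose component at $id_{\ff C}$ has exactly the displayed domain and codomain because $\ff f_*(id_{\ff C})=\ff f$. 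Since each $\theta_{\ff f}$ is invertible in $\cc{C}at$, its component at $id_{\ff C}$ is a bijection, so every component of $\varepsilon_\theta$ is an isomorphism.

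It then remains to verify three compatibilities, each of which I expect to reduce to an axiom of $\theta$ specialized at the single object $id_{\ff C}$. Naturality of $(\varepsilon_\theta)_{\ff D}$ in $\ff f$ should be exactly axiom PN2 of~\ref{pseudonatural}, applied to a $2$-cell $\alpha\colon\ff f\Rightarrow\ff g$ of $\cc{C}(\ff C,\ff D)$ and then evaluated at $id_{\ff C}\in\cc{C}(\ff C,\ff C)$, using $(\alpha_*)_{id_{\ff C}}=\alpha$. That the family $\{(\varepsilon_\theta)_{\ff D}\}$ satisfies the modification axiom should be exactly axiom PN1: since $\theta'_{\ff u}=id$ for all $\ff u$, the modification identity collapses to $(\theta_{\ff u\ff f})_{id_{\ff C}}=(\theta_{\ff u})_{\ff f}\circ\ff F\ff u\bigl((\theta_{\ff f})_{id_{\ff C}}\bigr)$, which is PN1 of $\theta$ for the composable pair $\ff C\mr{\ff f}\ff D\mr{\ff u}\ff E$ read off at $id_{\ff C}$. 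Finally, naturality of $\varepsilon$ in $\theta$ unwinds, for a modification $\rho\colon\theta\Rightarrow\eta$, to the modification axiom for $\rho$ evaluated at $id_{\ff C}$. Combining the section identity with this invertible modification exhibits $\tilde\ell$ as a quasi-inverse of $\tilde h$, yielding the asserted equivalence, with $\tilde\ell$ a strict section.

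I do not foresee a genuine obstacle here; the only real work is the bookkeeping in the last paragraph, namely matching the variance conventions of Definition~\ref{pseudonatural} (the structure $2$-cell $\theta_{\ff u}$ is indexed by the arrows of $\cc{C}$, with $\cc{C}(\ff C,-)$ in the source slot and $\ff F$ in the target slot), keeping the fixed object $\ff C$ distinct from the objects indexing the composable pairs, and recognising that the PN1 and PN2 equalities --- which are equalities of natural transformations between functors out of $\cc{C}(\ff C,\ff C)$ --- evaluated at $id_{\ff C}$ yield precisely the modification and naturality conditions required of $\varepsilon_\theta$.
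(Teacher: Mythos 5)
Your proposal is correct and follows essentially the same route as the paper, which defines $\tilde h$ and $\tilde\ell$ exactly as in the strict Yoneda lemma and observes that the counit is invertible because $\theta$ is pseudonatural (the paper defers the verification to an external reference). Your explicit counit $((\varepsilon_\theta)_{\ff D})_{\ff f}=(\theta_{\ff f})_{id_{\ff C}}$ and the reduction of its naturality, modification, and $\ff F$-naturality conditions to PN2, PN1, and the modification axiom evaluated at $id_{\ff C}$ supply precisely the details the paper leaves to the reader.
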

\begin{proof}
$\tilde{h}$ and $\tilde{\ell}$ are defined as in \ref{2Yoneda}, but now $\tilde{\ell}$ is only a section quasi-inverse of $\tilde{h}$.
The details can be checked by the reader. One can found a guide in \cite{nlab} for the case of lax functors and bicategories. We refer to the arguing and the notation there: In our case, the unit $\eta$ is the equality because $\ff{F}$ is a 2-functor, and the counit $\epsilon$ is an isomorphism because $\alpha$ is pseudonatural and the unitor $r$ is the equality.
\end{proof}
\begin{corollary} \label{repflexible}
For any locally small 2-category $\cc{C}$, and $\ff{C} \in \cc{C}$,  the inclusion
$\cc{H}om(\cc{C},\cc{C}at)(\cc{C}(\ff{C},-),\ff{F}) \mr{i} \cc{H}om_p(\cc{C},\cc{C}at)(\cc{C}(\ff{C},-),\ff{F})$ has a retraction
$\alpha$, natural in $\ff{F}$, $\alpha \, i = id$,
$i \, \alpha \cong id$, which determines an equivalence of categories.
\end{corollary}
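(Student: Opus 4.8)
The plan is to deduce this immediately from the two Yoneda lemmas \ref{2Yoneda} and \ref{pseudoYoneda}, by comparing their formulas through the inclusion $i$. Write $h$ and $\ell$ for the mutually inverse functors of \ref{2Yoneda}, and $\tilde{h}$ and $\tilde{\ell}$ for the equivalence and its section quasi-inverse of \ref{pseudoYoneda}. The first observation is that $i$ does not alter the Yoneda formulas: both $h$ and $\tilde{h}$ are given on objects by $\theta\mapsto\theta_\ff{C}(id_\ff{C})$ and on arrows by $\rho\mapsto(\rho_\ff{C})_{id_\ff{C}}$, and a 2-natural transformation (resp.\ a modification between 2-natural transformations) is by definition a pseudonatural transformation with $\theta_\ff{f}=id$ (resp.\ a modification between the associated pseudonatural transformations); hence $\tilde{h}\circ i=h$ on the nose. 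Dually, $\tilde{\ell}$ is defined by the same formula as $\ell$, and its values are in fact 2-natural (the structure 2-cells vanish because $\ff{F}$ is a 2-functor, so $\ff{F}\ff{u}\circ\ff{F}\ff{g}=\ff{F}(\ff{u}\ff{g})$ strictly), whence $i\circ\ell=\tilde{\ell}$.

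Now set $\alpha:=\ell\circ\tilde{h}:\cc{H}om_p(\cc{C},\cc{C}at)(\cc{C}(\ff{C},-),\ff{F})\mr{}\cc{H}om(\cc{C},\cc{C}at)(\cc{C}(\ff{C},-),\ff{F})$. Then
$$\alpha\circ i=\ell\circ\tilde{h}\circ i=\ell\circ h=id,$$
since $\ell$ is the strict inverse of $h$ by \ref{2Yoneda}; and
$$i\circ\alpha=i\circ\ell\circ\tilde{h}=\tilde{\ell}\circ\tilde{h}\cong id,$$
the last isomorphism being the natural isomorphism $\tilde{\ell}\,\tilde{h}\cong id$ exhibiting $\tilde{\ell}$ as a quasi-inverse of $\tilde{h}$ in \ref{pseudoYoneda}. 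Thus $\alpha$ is a retraction of $i$ with $\alpha i=id$ and $i\alpha\cong id$; in particular both $i$ and $\alpha$ are equivalences of categories (one may then adjust $\alpha$ to an adjoint equivalence in the usual way, though this is not needed). Naturality of $\alpha$ in $\ff{F}$ is inherited from that of $h$, $\tilde{h}$ and $\ell$ in $\ff{F}$ (\ref{2Yoneda}, \ref{pseudoYoneda}), and for the same reason the isomorphism $i\alpha\cong id$ is natural in $\ff{F}$.

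The only point that requires care — and where a careless treatment could go wrong — is the bookkeeping that the strict and pseudo Yoneda constructions genuinely agree under $i$, i.e.\ that $\tilde{h}\circ i=h$ and $i\circ\ell=\tilde{\ell}$ hold strictly and not merely up to coherent isomorphism, and that the isomorphism $\tilde{\ell}\,\tilde{h}\cong id$ provided by \ref{pseudoYoneda} is natural in $\ff{F}$ so that it descends to the asserted natural isomorphism $i\alpha\cong id$. Once these identifications are checked there is no further computation: the statement is a formal consequence of the commuting triangle of functors $h,\tilde{h},i$ over $\ff{F}\ff{C}$.
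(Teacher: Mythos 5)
Your proof is correct and is essentially the paper's own argument: the paper records the compatibility as $i=\tilde{\ell}\,h$ (equivalent to your identities $\tilde{h}\,i=h$ and $i\,\ell=\tilde{\ell}$) and defines the same retraction $\alpha=\ell\,\tilde{h}$, with the identities $\alpha i=id$ and $i\alpha\cong id$ following exactly as you compute from $h\ell=\ell h=id$ and $\tilde{h}\tilde{\ell}=id$, $\tilde{\ell}\tilde{h}\cong id$.
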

\begin{proof}
Note that $i = \tilde{\ell} \, h$, then define $\alpha = \ell \, \tilde{h}$.
\end{proof}

\begin{corollary}
The Yoneda 2-functors in \ref{Y2functor} can be considered as \mbox{2-functors} landing in the $\cc{H}om_p$ 2-functor categories. In this case, they are pseudo-fully faithful (meaning that they determine equivalences and not isomorphisms between the hom categories). \cqd
\end{corollary}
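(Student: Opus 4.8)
The plan is to reduce the statement to the two forms of the Yoneda lemma already established, namely Corollary \ref{yonedaff} (2-fully-faithfulness of the strict Yoneda 2-functors) and Corollary \ref{repflexible} (the inclusion of strict into pseudonatural transformations out of a representable is an equivalence). I will spell out case a.\ of \ref{Y2functor}; case b.\ is obtained by applying the same reasoning with $\cc{C}^{op}$ in place of $\cc{C}$.

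First I would observe that the inclusion $i\colon \cc{H}om(\cc{C},\cc{C}at) \mr{} \cc{H}om_p(\cc{C},\cc{C}at)$ --- which is the identity on 2-functors, sends a 2-natural transformation to the pseudonatural transformation whose structure 2-cells are all identities, and is the identity on modifications --- is a strict 2-functor. Consequently $i^{op}$ is a 2-functor, and so is the composite
$$
\cc{C} \mr{\ff{y}^{(-)}} \cc{H}om(\cc{C},\cc{C}at)^{op} \mr{i^{op}} \cc{H}om_p(\cc{C},\cc{C}at)^{op};
$$
this composite is precisely what is meant by ``$\ff{y}^{(-)}$ considered as landing in the $\cc{H}om_p$ 2-functor category''.

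Next, for fixed objects $\ff{C},\ff{D}\in\cc{C}$, I would identify the functor induced by this composite on the hom-category $\cc{C}(\ff{C},\ff{D})$. Since the action on hom-categories of a composite of 2-functors is the composite of the actions, and since $\cc{H}om_\epsilon(\cc{C},\cc{C}at)^{op}(\cc{C}(\ff{C},-),\cc{C}(\ff{D},-)) = \cc{H}om_\epsilon(\cc{C},\cc{C}at)(\cc{C}(\ff{D},-),\cc{C}(\ff{C},-))$ for $\cc{H}om_\epsilon$ either $\cc{H}om$ or $\cc{H}om_p$, this induced functor is the composite of $(\ff{y}^{(-)})_{\ff{C},\ff{D}}\colon \cc{C}(\ff{C},\ff{D}) \mr{} \cc{H}om(\cc{C},\cc{C}at)(\cc{C}(\ff{D},-),\cc{C}(\ff{C},-))$ with the inclusion functor $\cc{H}om(\cc{C},\cc{C}at)(\cc{C}(\ff{D},-),\cc{C}(\ff{C},-)) \mr{i} \cc{H}om_p(\cc{C},\cc{C}at)(\cc{C}(\ff{D},-),\cc{C}(\ff{C},-))$. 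The first factor is an isomorphism of categories by Corollary \ref{yonedaff}. For the second factor, the first argument $\cc{C}(\ff{D},-)$ is representable, so Corollary \ref{repflexible} (with $\ff{F}=\cc{C}(\ff{C},-)$) tells us it is an equivalence of categories, with explicit retraction $\ell\,\tilde h$, and in general not an isomorphism. A composite of an isomorphism of categories with an equivalence of categories is an equivalence, so the induced functor on hom-categories is an equivalence --- which is exactly the assertion that $i^{op}\circ\ff{y}^{(-)}$ is pseudo-fully faithful --- and it need not be an isomorphism precisely because $i$ need not be.

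I do not expect a real obstacle. The only points that need attention are bookkeeping: keeping track of the two occurrences of $(-)^{op}$ so that the representable ends up in the slot of the hom-category for which Corollary \ref{repflexible} is stated, and, in case b., invoking that corollary for $\cc{C}^{op}$ so as to cover the contravariant representables $\cc{C}(-,\ff{C})$. The 2-functoriality of $i^{op}\circ\ff{y}^{(-)}$ is already secured by the first step, so once the hom-category functors are identified as equivalences there is nothing further to verify.
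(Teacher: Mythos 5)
Your proposal is correct and follows essentially the route the paper intends: the paper states this corollary without proof precisely because it is the composite of the strict Yoneda isomorphism on hom-categories (Corollary \ref{yonedaff}) with the equivalence of Corollary \ref{repflexible} applied to the representable in the first slot, which is exactly your decomposition. Your additional care with the $(-)^{op}$ bookkeeping and the 2-functoriality of the inclusion $i$ is accurate and fills in the details correctly.
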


\subsection{Weak limits and colimits} $ $

\vspace{1ex}

By \emph{weak} we understand any of the several ways universal properties can be relaxed in 2-categories. Note that pseudolimits and pseudocolimits (already considered in \cite{G3}) require isomorphisms, and have many advantages over bilimits and bicolimits, which only require equivalences. Their universal properties are both stronger and more convenient to use, and they play the principal role in this paper.
The defining universal properties characterize bilimits up to equivalence and pseudolimits up to isomorphism.

\vspace{1ex}

{\bf Notation}
 We consider pseudocolimits $\coLim{i\in \cc{I}}{\ff{F}i}$, and bicolimits
$\bicoLim{i\in \cc{I}}{\ff{F}i}$, of covariant 2-functors, and its dual concepts, pseudolimits $\Lim{i\in \cc{I}}{\ff{F}i}$, and bilimits $\biLim{i\in \cc{I}}{\ff{F}i}$, of contravariant \mbox{2-functors.}

\begin{definition}\label{pseudocone}
Let $\ff{F}:\cc{I} \mr{} \cc{A}$ be a 2-functor and $\ff{A}$ an object of $\cc{A}$.
A \emph{pseudocone} for $\ff{F}$ with vertex $\ff{A}$ is a pseudonatural transformation from $\ff{F}$ to the 2-functor which is constant at $\ff{A}$, i.e. it consists in a family of morphisms of
$\cc{A}$ $\{\ff{F}i \mr{\theta_i}\ff{A}\}_{i\in \cc{I}}$
and a family of invertible 2-cells of $\cc{A}$
\mbox{$\{\theta_i \Mr{\theta_u} \theta_{j}\ff{F}u\}_{i\mr{u} j \in \cc{I}}$}
  satisfying the following equations:

\vspace{1ex}

PC0: $\forall\ i \in \cc{I}$,
$\theta_{id_{i}}=id_{\theta_{i}}$

PC1: $\forall\ i \mr{u} j \mr{v} k \in \cc{I}$,
$\hspace{3ex} \theta_v\ff{F}_u \circ \theta_u = \theta_{vu}$

$$\vcenter{
      \xymatrix@C=-.4pc{
	&& \theta_{i} \dcellopb{\theta_u}
	\\
	& 
	\theta_{j} \dcellopb{\theta_v}
	&&
	\ff{F}u \did
	\\ 
	\theta_{k}
	&& 
	\ff{F}v 
	&
	\ff{F}u
	}}
\vcenter{\xymatrix@C=-.4pc{\quad \quad = \quad \quad  }						}
\vcenter{\xymatrix@C=-.4pc{ 
	    &
	    \theta_{i} \dcellop{\theta_{vu}} 
	    \\ 
	    \theta_{k}
	    &
	    \ff{F}v 
	    & 
	    \ff{F}u 
	    }} \ i.e.$$
$$\vcenter{
  \xymatrix@R=.5pc
    {
    \ff{F}i \ar[dd]_{\ff{F}u} \ar@/^4ex/[ddrr]^{\theta_i} 
    && 
    \\
    & 
    \Downarrow \theta_u 
    & 
    \\
    \ff{F}j \ar[dd]_{\ff{F}v} \ar[rr]^{\theta_{j}} 
    && 
    \ff{C} 
    \\
    & 
    \Downarrow \theta_v 
    &  
    \\
    \ff{F}k \ar@/_4ex/[uurr]_{\theta_{k}}  
    &&  
    }}
  \vcenter{
    \xymatrix@C=-.4pc{\quad \quad = \quad \quad}} 
    \vcenter{
    \xymatrix{
      \ff{F}i \ar[d]_{\ff{F}u}\ar@/^2ex/[dr]^{\theta_i}
      & 
      \\
      \ff{F}j \ar[d]_{\ff{F}v} \ar@{}[r]|{\Downarrow \theta_{vu}} 
      & 
      \ff{C} 
      \\
      \ff{F}k \ar@/_2ex/[ur]_{\theta_{k}} 
      && 
      }}
$$

PC2: $\forall\ i\cellrd{u}{\alpha}{v} j \in \cc{I}$,
$\hspace{5ex} \theta_v = \theta_j\ff{F}\alpha \circ \theta_u$

$$\vcenter{
    \xymatrix@C=-.4pc{
	  &
	  \theta_{i} \dcellopb{\theta_v} 
	  \\
	  \theta_{j}
	  & &
	  \ff{F}v
	  }}
	  \vcenter{\xymatrix@C=-.4pc{\quad \quad = \quad \quad  }						}
\vcenter{
      \xymatrix@C=-.4pc{ 
	  &
	  \theta_i \dcellop{\theta_{u}} 
	  \\
	  \theta_{j} \did 
	  && 
	  \ff{F}u \dcellb{\ff{F}\alpha} 
	  \\ 
          \theta_{j}      
          && 
          \ff{F}v 
	  }}\ \ \ \ \ \ \ i.e.$$

$$\vcenter{
    \xymatrix@R=.5pc{
	\ff{F}i \ar[dd]_{\ff{F}v} \ar@/^4ex/[ddrr]^{\theta_i}
	&& 
	\\
	& 
	\Downarrow \theta_v 
	& 
	\\
	\ff{F}j \ar[rr]_{\theta_{j}} 
	&& 
	\ff{C} 
	}}
\vcenter{\xymatrix@C=-.4pc{\quad \quad = \quad \quad}} 
\vcenter{
    \xymatrix@R=.5pc{
	\ff{F}i \ar@<-2ex>[dd]_{\ff{F}v} \ar@<2ex>[dd]^{\ff{F}u} \ar@/^4ex/[ddrr]^{\theta_i} 
	&& 
	\\
	\Ml{\ff{F}\alpha} 
        & 
        \Downarrow \theta_u 
        & 
        \\
        \ff{F}j \ar[rr]_{\theta_{j}}  
        && 
        \ff{C} 
        \\
            }}
	  $$

A \emph{morphism of pseudocones} between $\theta$ and $\eta$ with the same vertex is a modification, i.e. a family of 2-cells of $\cc{A}$ $\{\theta_{i}\Mr{\rho_{i}} \eta_{i}\}_{i\in \cc{I}}$ satisfying the following equation:

\vspace{1ex}

PCM: $\forall\ i\stackrel{u}{\rightarrow} j \in \cc{I}$,
$\;\eta_u \circ \rho_i = \rho_j \ff{F}u \circ \theta_u$

$$\vcenter{
      \xymatrix@C=-.4pc{
	  &\theta_{i} \dcellb{\rho_i} 
	  \\
	  &
	  \eta_i \dcellopb{\eta_u}
	  \\
	  \eta_{j}
	  & &
	  \ff{F}u
	  }}
\vcenter{\xymatrix@C=-.4pc{\quad \quad = \quad \quad  }						}
\vcenter{
      \xymatrix@C=-.4pc{
	  &
	  \theta_{i} \dcellopb{\theta_{u}}
	  \\ 
	  \theta_{j} \dcellb{\rho_{j}}
	  &&
	  \ff{F}u \did
	  \\
	  \eta_{j} 
	  && 
	  \ff{F}u
	  }}\ \ \ \ \ \ i.e.$$

$$\vcenter{
      \xymatrix@R=.5pc{
	  \ff{F}i \ar[dd]_{\ff{F}u} \ar@<1.5ex>@/^5ex/[ddrr]^{\theta_i}_{\Downarrow \rho_i} \ar@<-1.5ex>@/^5ex/[ddrr]_{\eta_i} 
	  && 
	  \\
	   \ar@{}[rr]_{\Downarrow \eta_u}
	  & 
	  & 
	  \\
          \ff{F}j \ar[rr]_{\eta_{j}}  
          && 
          \ff{C} 
          }}
\vcenter{\xymatrix@C=-.4pc{\quad \quad = \quad \quad}} 
\vcenter{
      \xymatrix@R=.5pc{
	  \ff{F}i \ar[dd]_{\ff{F}u} \ar@/^4ex/[rrdd]^{\theta_i} 
	  && 
	  \\ 
          \ar@{}[rr]^{\Downarrow \theta_u}
          &  
          & 
          \\
          \ff{F}j \ar@<-1.5ex>[rr]_{\eta_{j}} \ar@<1.5ex>[rr]^{\theta_{j}} \ar@{}[rr]|{\Downarrow \rho_{j}}  
          && 
          \ff{C} 
          \\
           }}
	  $$
\vspace{1ex}

Pseudocones form a category
$\ff{PC}_\cc{A}(\ff{F},\ff{A}) = \cc{H}om_p(\cc{I},\cc{A})(\ff{F}, \ff{A})$ furnished with a pseudocone $\ff{PC}_\cc{A}(\ff{F},\ff{A}) \mr{} \cc{A}(\ff{F}i,\, \ff{A})$,
$\{\theta_i\}_{i\in \cc{I}} \mapsto \theta_i$,
 for the 2-functor
\mbox{$\cc{I}^{op} \mr{\cc{A}(\ff{F}(-),\, \ff{A})} \cc{CAT}$.}
\end{definition}
\begin{remark} \label{PCbifunctor} ${ }$

Since $\cc{H}om_p(\cc{I},\cc{A})$ is a 2-category, it follows:

\noindent a. Pseudocones determine a 2-bifunctor
$\cc{H}om(\cc{I},\cc{A})^{op} \times \cc{A}
\mr{\ff{PC}_\cc{A}} \cc{CAT}$.

\vspace{1ex}

From Remark \ref{Homisbifunctor}  it follows in particular:

\noindent b. A 2-functor $\cc{A} \mr{\ff{H}} \cc{B}$ induces a functor between the categories of pseudocones
$\ff{PC}_\cc{A}(\ff{F},\ff{A}) \mr{\ff{PC}_\ff{H}} \ff{PC}_\cc{B}(\ff{HF},\ff{HA})$.
\cqd
\end{remark}
\begin{definition} \label{colimits} $ $
The \emph{pseudocolimit} in $\cc{A}$ of the 2-functor $\ff{F}$  is the universal pseudocone, denoted \mbox{$\{\ff{F}i\mr{\lambda_i} \coLim{i\in \cc{I}}{\ff{F}i}\}_{i\in \cc{I}}$,} in the sense that \mbox{$\forall\ \ff{A}\in \cc{A}$,} pre-composition with the $\lambda_i$ is an isomorphism of categories
$
\; \cc{A}(\coLim{i\in \cc{I}}{\ff{F}i},\ff{A}) \mr{\lambda^*} \ff{PC}_\cc{A}(\ff{F},\ff{A}).
$
Equivalently, there is an isomorphism of categories
\mbox{$\cc{A}(\coLim{i\in \cc{I}}{\ff{F}i},\ff{A}) \mr{\cong} \Lim{i\in \cc{I}^{op}}{\cc{A}(\ff{F}i,\ff{A})}$} commuting with the pseudocones. Remark that there is also an isomorphism of categories
\mbox{$\ff{PC}_\cc{A}(\ff{F},\ff{A}) \mr{\cong} \Lim{i\in \cc{I}^{op}}{\cc{A}(\ff{F}i,\ff{A})}$}

\vspace{1ex}

Requiring $\lambda^*$ to be an equivalence (which implies that also the other two isomorphisms above are equivalences) defines the notion of \mbox{\emph{bicolimit}.} Clearly, pseudocolimits are bicolimits.

\vspace{1ex}

We omit the explicit consideration of the dual concepts. \cqd
\end{definition}

It is well known that in the strict 2-functor 2-categories the strict limits and colimits are performed pointwise (if they exists in the codomain category). Here we establish this fact for the pseudo limits and pseudocolimits in both the strict and the pseudo 2-functor 2-categories. Abusing notation we can say that the formula
$(\coLim{i\in \cc{I}}{\ff{F}_i)(\ff{C}}) = \coLim{i\in \cc{I}}{\ff{F}_i(\ff{C}})$ holds in both 2-categories. The verification of this is straightforward but requires some care.

\begin{proposition}\label{pointwisebilimit}
Let $\cc{I}\mr{\ff{F}}\cc{A}$, $i\mapsto \ff{F}_i$ be a 2-functor where
$\cc{A}$ is either $\cc{H}om(\cc{C},\cc{D})$ or $\cc{H}om_p(\cc{C},\cc{D})$.
For each $\ff{C} \in \cc{C}$ let $\ff{F}_i\ff{C} \mr{\lambda^\ff{C}_i} \ff{L}\ff{C}$ be a pseudocolimit pseudocone in $\cc{D}$ for the 2 functor
$\cc{I} \mr{\ff{F}} \cc{A} \mr{ev(-, \ff{C})} \cc{D}$ (where $ev$ is evaluation, see \ref{evaluation}). Then $\ff{LC}$ is 2-functorial in $\ff{C}$ in such a way that $\lambda^\ff{C}_i$ becomes 2-natural and $\ff{F}_i \mr{\lambda_i} \ff{L}$ is a pseudocolimit pseudocone in
$\cc{A}$ in both cases.
By duality the same assertion holds for pseudolimits.
\end{proposition}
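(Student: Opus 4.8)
The plan is to \emph{construct $\ff{L}$ by hand}, using the pointwise universal property at each stage, and then verify universality in $\cc{A}$ by reducing every condition to its pointwise counterpart through the evaluation $2$-functors $ev(-,\ff{C})\colon\cc{A}\mr{}\cc{D}$ (Remark \ref{evaluation}) and the functors they induce on categories of pseudocones (Remark \ref{PCbifunctor}.b). The one tool I would use over and over is that, by Definition \ref{colimits}, the pointwise universal property is an \emph{isomorphism} of categories $\cc{D}(\ff{L}\ff{C},\ff{E})\mr{\cong}\ff{PC}_\cc{D}(ev(-,\ff{C})\ff{F},\ff{E})$; thus both a morphism and a $2$-cell out of $\ff{L}\ff{C}$ are \emph{uniquely} determined by the pseudocone, resp.\ the modification of pseudocones, it induces after whiskering with the $\lambda^\ff{C}_i$ (which carry implicit structural $2$-cells $\lambda^\ff{C}_u$).

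First I would extend $\ff{L}$ to $1$- and $2$-cells of $\cc{C}$. For $\ff{C}\mr{\ff{f}}\ff{C}'$, the morphisms $\lambda^{\ff{C}'}_i\,(\ff{F}_i\ff{f})\colon\ff{F}_i\ff{C}\to\ff{L}\ff{C}'$ together with the invertible $2$-cells obtained by pasting $\lambda^{\ff{C}'}_u$ with the (inverse) pseudonaturality $2$-cells $(\ff{F}u)_\ff{f}$ of the morphisms $\ff{F}u$ of $\cc{A}$ form a pseudocone for $ev(-,\ff{C})\ff{F}$ with vertex $\ff{L}\ff{C}'$; its axioms PC0--PC2 follow from PC0--PC2 for $\lambda^{\ff{C}'}$ and PN0--PN2 for the $\ff{F}u$ and the $\ff{F}\alpha$. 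The pointwise universal property then yields a unique $\ff{L}\ff{f}\colon\ff{L}\ff{C}\to\ff{L}\ff{C}'$ with $\ff{L}\ff{f}\circ\lambda^\ff{C}_i=\lambda^{\ff{C}'}_i\,(\ff{F}_i\ff{f})$, compatibly with the structural $2$-cells. For $\ff{C}\cellrd{\ff{f}}{\alpha}{\ff{g}}\ff{C}'$, the $2$-cells $\lambda^{\ff{C}'}_i\,(\ff{F}_i\alpha)$ form a modification between the pseudocones defining $\ff{L}\ff{f}$ and $\ff{L}\ff{g}$, hence are induced by a unique $\ff{L}\alpha\colon\ff{L}\ff{f}\Mr{}\ff{L}\ff{g}$ with $\ff{L}\alpha\,\lambda^\ff{C}_i=\lambda^{\ff{C}'}_i\,(\ff{F}_i\alpha)$. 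That $\ff{L}$ preserves composition, identities and both compositions of $2$-cells is now automatic: each side of each required equation is a morphism or $2$-cell out of some $\ff{L}\ff{C}$ inducing the same pseudocone, resp.\ modification, so the two sides agree by uniqueness. Moreover the equations $\ff{L}\ff{f}\circ\lambda^\ff{C}_i=\lambda^{\ff{C}'}_i\,(\ff{F}_i\ff{f})$ and $\ff{L}\alpha\,\lambda^\ff{C}_i=\lambda^{\ff{C}'}_i\,(\ff{F}_i\alpha)$ say exactly that each $\lambda_i\colon\ff{F}_i\mr{}\ff{L}$ is a $2$-natural transformation, hence a morphism of $\cc{A}$ in both cases; setting $(\lambda_u)_\ff{C}:=\lambda^\ff{C}_u$ gives modifications $\lambda_i\Mr{\lambda_u}\lambda_j\ff{F}u$, and since PC0--PC2 and the modification axiom PCM hold componentwise, $\{\lambda_i,\lambda_u\}$ is a pseudocone for $\ff{F}$ with vertex $\ff{L}$ in $\cc{A}$.

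It remains to show this pseudocone is universal, i.e.\ that for every $\ff{G}\in\cc{A}$ whiskering with $\lambda$ gives an isomorphism $\cc{A}(\ff{L},\ff{G})\mr{\cong}\ff{PC}_\cc{A}(\ff{F},\ff{G})$. Given a pseudocone $\psi$ with vertex $\ff{G}$, I would apply the functor of Remark \ref{PCbifunctor}.b induced by $\ff{H}=ev(-,\ff{C})$ to obtain, for each $\ff{C}$, a pseudocone $\psi^\ff{C}$ for $ev(-,\ff{C})\ff{F}$ with vertex $\ff{G}\ff{C}$, hence a unique $\theta^\ff{C}\colon\ff{L}\ff{C}\to\ff{G}\ff{C}$ with $\theta^\ff{C}\circ\lambda^\ff{C}_i=(\psi_i)_\ff{C}$. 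To assemble $\{\theta^\ff{C}\}$ into a transformation $\ff{L}\mr{\theta}\ff{G}$, observe that for $\ff{C}\mr{\ff{f}}\ff{C}'$ the composites $\ff{G}\ff{f}\circ\theta^\ff{C}$ and $\theta^{\ff{C}'}\circ\ff{L}\ff{f}$, whiskered with $\lambda^\ff{C}_i$, become the pseudocones $\ff{G}\ff{f}\,(\psi_i)_\ff{C}$ and $(\psi_i)_{\ff{C}'}(\ff{F}_i\ff{f})$, which are identified by the modification with $i$-th component the pseudonaturality $2$-cell $(\psi_i)_\ff{f}$; by the pointwise universal property this modification is induced by a unique $2$-cell $\theta_\ff{f}\colon\ff{G}\ff{f}\,\theta^\ff{C}\Mr{}\theta^{\ff{C}'}\,\ff{L}\ff{f}$, invertible since the $(\psi_i)_\ff{f}$ are. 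When $\cc{A}=\cc{H}om$ all $(\psi_i)_\ff{f}$ are identities, so $\theta_\ff{f}=id$ by uniqueness and $\theta$ is $2$-natural; when $\cc{A}=\cc{H}om_p$, $\theta$ is pseudonatural --- this is precisely why the statement holds in both cases. The axioms PN0--PN2 for $\theta$, the identity $\theta\cdot\lambda=\psi$ in $\ff{PC}_\cc{A}(\ff{F},\ff{G})$, uniqueness of $\theta$, and the analogous bijection at the level of $2$-cells (modifications of pseudocones against modifications $\theta\Mr{}\theta'$), all reduce to their pointwise versions and follow from the pointwise uniqueness, since transformations and modifications in $\cc{A}$ are detected componentwise. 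Applying all of this to $\cc{C}^{op}$, $\cc{D}^{op}$ and $\cc{I}^{op}$ gives the assertion for pseudolimits.

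I expect the main obstacle to be the bookkeeping of the last paragraph --- exhibiting the coherence $2$-cells $\theta_\ff{f}$ of the induced transformation and checking PN1--PN2 --- because there one must genuinely interleave the pseudocone $2$-cells $\lambda^\ff{C}_u$ with the pseudonaturality data of the $\ff{F}u$ and the $\psi_i$; everything else is forced by the uniqueness clause of the \emph{strict}, isomorphism-valued pointwise universal property, which is exactly what makes ``pointwise'' literally true here rather than merely true up to equivalence.
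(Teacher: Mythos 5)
Your proposal is correct and follows essentially the same route as the paper: construct $\ff{L}$ on arrows and $2$-cells from the strict pointwise universal property (the paper packages this step through naturality in the test object plus the Yoneda lemma, you apply the universal property directly with target $\ff{L}\ff{C}'$, which amounts to the same thing), observe the $\lambda_i$ are $2$-natural and form a pseudocone, and then verify universality in $\cc{A}$ by transporting pseudocones and their modifications along $ev(-,\ff{C})$ and lifting the pseudonaturality data of the $\psi_i$ through the isomorphism $\cc{D}(\ff{L}\ff{C},\ff{G}\ff{C}')\cong\ff{PC}_\cc{D}(ev(-,\ff{C})\ff{F},\ff{G}\ff{C}')$ --- exactly the point where the paper also appeals to that isomorphism to see that the induced cell ``lives in $\cc{A}$'' in both the strict and pseudo cases.
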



\begin{proof}
Given $\ff{C} \cellrd{\ff{f}}{\alpha}{\ff{g}}  \ff{D}$ in $\cc{C}$, evaluation determines a 2-cell in $\cc{H}om(\cc{I}, \cc{D})$
$
\ff{F}\ff{C} \cellrd{\ff{F}\ff{f}}{\ff{F}\alpha}{\ff{F}\ff{g}}
\ff{F}\ff{D} =
ev(\ff{F}(\,\hbox{-}\,), \ff{C} \cellrd{\ff{f}}{\alpha}{\ff{g}}  \ff{D})$.
(note that \mbox{$(\ff{FC})_i = \ff{F}_i\ff{C}$}, and similarly for $\ff{f}$, $\ff{g}$ and $\ff{\alpha}$). Then,  for each $\ff{X} \in \cc{D}$, it follows (from
%
%
\mbox{Remark \ref{PCbifunctor} a.)} that precomposing with this 2-cell determines a 2-cell (clearly 2-natural in the variable $\ff{X}$) in the right leg of the diagram below. Since the rows are isomorphisms, there is a unique 2-cell (also natural in the variable $\ff{X}$) in the left leg which makes the diagram commutative.
$$
\xymatrix@C=9.5ex
       {
        \cc{D}(\ff{LD}, \ff{X})
                     \ar[r]^<<<<<<<<<{(\lambda^\ff{D})^*}_<<<<<<<<{\cong}
                     \ar@<-2ex>[d]^{\;\Rightarrow}
                     \ar@<2ex>[d]
      & \ff{PC}_\cc{D}(\ff{F}\ff{D}, \ff{X})
                  \ar@<-2ex>[d]^{\;\Rightarrow}
                  \ar@<2ex>[d]
      \\
      \cc{D}(\ff{LC}, \ff{X})
                     \ar[r]^<<<<<<<<<{(\lambda^\ff{C})^*}_<<<<<<<<{\cong}
      & \ff{PC}_\cc{D}(\ff{F}\ff{C}, \ff{X})
      }
$$
Then, by the Yoneda lemma \ref{yonedaff}, the left leg is given by precomposing with a unique 2-cell in $\cc{D}$, that we denote
$\ff{L}\ff{C} \cellrd{\ff{L}\ff{f}}{\ff{L}\alpha}{\ff{L}\ff{g}}
                                                   \ff{L}\ff{D}$. It is clear by uniqueness that this determines a 2-functor
$\cc{C} \mr{\ff{L}} \cc{D}$.

Putting $\ff{X} = \ff{L}\ff{D}$ in the upper left corner and tracing the identity down the diagram yields the following commutative diagram of pseudocones in $\cc{D}$:
$$
\xymatrix@C=12ex@R=8ex
       {
        \ff{F}_i\ff{C}
               \ar[r]^<<<<<<<<<{\lambda_i^\ff{C}}
               \ar@<-2ex>[d]_{\ff{F}_i{\ff{f}}}
               \ar@<-2ex>[d]^<<<<<<{\ff{F}_i{\ff{\alpha}}}
                               ^<<<<<<<<<{\;\Rightarrow}
               \ar@<2ex>[d]^{\ff{F}_i{\ff{g}}}
      & \ff{L}\ff{C}
                     \ar@<-2ex>[d]_{\ff{L}{\ff{f}}}
               \ar@<-2ex>[d]^<<<<<<{\ff{L}{\ff{\alpha}}}
                               ^<<<<<<<<<{\;\Rightarrow}
                      \ar@<2ex>[d]^{\ff{L}{\ff{g}}}
     \\
       \ff{F}_i\ff{D}
                     \ar[r]^<<<<<<<<<{\lambda_i^\ff{D}}
      & \ff{L}\ff{D}
      }
$$
This shows that $\ff{L}$ is furnished with a pseudocone for $\ff{F}$ and that the $\lambda_i$ are \mbox{2-natural.} It only remains to check the universal property:

Let \mbox{$\cc{C} \mr{\ff{G}} \cc{D}$} be a 2-functor, consider the 2-functor $\cc{A} \mr{ev(-, \ff{C})} \cc{D}$. We have the following diagram, where the right leg is given by \mbox{Remark \ref{PCbifunctor} b.:}
$$
\xymatrix@C=8ex
       {
        \cc{A}(\ff{L}, \ff{G})
                     \ar[r]^<<<<<<<<<{\lambda^*}
                     \ar@<-0.5ex>[d]^{ev(-, \ff{C})}
      & \ff{PC}_\cc{A}(\ff{F}, \ff{G})
                  \ar@<-0.5ex>[d]^{\ff{PC}_{ev(-, \ff{C})}}
      \\
      \cc{D}(\ff{LC}, \ff{GC})
                     \ar[r]^<<<<<<<{(\lambda^\ff{C})^*}_<<<<<<{\cong}
      & \ff{PC}_\cc{D}(\ff{F}\ff{C}, \ff{GC})
      }
$$
We prove now that the upper row is an isomorphism. Given
$\ff{F}_i \scellrd{\theta_i}{\rho_i}{\eta_i} \ff{G}$ in
$\ff{PC}_\cc{A}(\ff{F}, \ff{G})$, it follows there exists a unique
$\ff{L}\ff{C} \cellrd{\ff{\tilde{\theta}\ff{C}}}
                     {\ff{\tilde{\rho}\ff{C}}}
                     {\ff{\tilde{\eta}\ff{C}}}
                                                   \ff{G}\ff{C}$
in $\cc{D}(\ff{LC}, \ff{GC})$ such that
$\tilde{\rho}\ff{C}\, \lambda_i^\ff{C} = \rho_i\ff{C}$.
%
%
It is necessary to show that this 2-cell actually lives in $\cc{A}$. This has to be checked for any
$\ff{C} \scellrd{\ff{f}}{\alpha}{\ff{g}}  \ff{D}$ in $\cc{C}$. In both cases it can be done considering the isomorphism
$
\xymatrix@C=6.5ex
       {
        \cc{D}(\ff{LC}, \ff{\ff{G}\ff{D}})
                     \ar[r]^<<<<<<<{(\lambda^\ff{C})^*}_<<<<<<{\cong}
        & \ff{PC}_\cc{D}(\ff{F}\ff{C}, \ff{G}\ff{D}).
      }
$
\end{proof}

We precise now what we do consider as \emph{preservation} properties of a \mbox{2-functor}. We do it in the case of pseudolimits and bilimits, but the same clearly applies to pseudocolimits and bicolimits. Let
$\cc{I}^{op} \mr{\ff{X}} \cc{C} \mr{\ff{H}} \cc{A}$ be any 2-functors.
\begin{definition} \label{preservation}
We say that $\ff{H}$ \emph{preserves} a pseudolimit (resp. bilimit) pseudocone $\ff{L} \mr{\pi_i} \ff{X}_i$ in
$\cc{C}$, if $\ff{H}\ff{L} \mr{\ff{H}\pi_i} \ff{H}\ff{X}_i$  is a pseudolimit (resp. bilimit) pseudocone in $\cc{A}$. Equivalently, if the (usual) comparison arrow is an isomorphism (resp. an equivalence) in $\cc{A}$.
\end{definition}
Note that by the very definition, the 2-representable 2-functors preserve pseudolimits and bilimits. Also, from proposition \ref{pointwisebilimit} it follows:
\begin{proposition} \label{yonedapreserves}
The Yoneda 2-functors in \ref{Y2functor} preserve pseudolimits. \flushright \cqd
\end{proposition}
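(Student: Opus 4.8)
The plan is to derive the statement from two facts already in place: that $2$-representable $2$-functors preserve pseudolimits (immediate from Definition \ref{preservation}, as noted just above), and that pseudolimits in the $2$-functor $2$-categories are computed pointwise (Proposition \ref{pointwisebilimit}). I will treat the covariant Yoneda $2$-functor $\ff{y}_{(-)}\colon \cc{C} \mr{} \cc{H}om(\cc{C}^{op},\cc{C}at)$ explicitly; the case of $\ff{y}^{(-)}$ follows by the same argument applied to $\cc{C}^{op}$, since the two Yoneda $2$-functors are interchanged by passing to the dual $2$-category (cf. \ref{Y2functor}).

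First I would record that, for each $\ff{D} \in \cc{C}$, the composite $2$-functor $\cc{C} \mr{\ff{y}_{(-)}} \cc{H}om(\cc{C}^{op},\cc{C}at) \mr{ev(-,\ff{D})} \cc{C}at$ is precisely the $2$-representable $\cc{C}(\ff{D},-)$: using the explicit descriptions of $\ff{y}_{(-)}$ and of evaluation (\ref{evaluation}), it sends $\ff{C} \mapsto \cc{C}(\ff{D},\ff{C})$, $\ff{f} \mapsto \ff{f}_*$ and $\alpha \mapsto \alpha_*$. Hence this composite preserves pseudolimits.

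Now let $\ff{X}\colon \cc{I}^{op} \mr{} \cc{C}$ be a $2$-functor with pseudolimit pseudocone $\ff{L} \mr{\pi_i} \ff{X}_i$ in $\cc{C}$. Since $\ff{y}_{(-)}$ is a $2$-functor it carries this to a pseudocone $\ff{y}_\ff{L} \mr{(\pi_i)_*} \ff{y}_{\ff{X}_i}$ for $\ff{y}_{(-)}\ff{X}$ in $\cc{H}om(\cc{C}^{op},\cc{C}at)$. Evaluating at any $\ff{D} \in \cc{C}$ yields, by the previous step, the image under $\cc{C}(\ff{D},-)$ of $\ff{L} \mr{\pi_i} \ff{X}_i$, namely $\cc{C}(\ff{D},\ff{L}) \mr{(\pi_i)_*} \cc{C}(\ff{D},\ff{X}_i)$, which is a pseudolimit pseudocone in $\cc{C}at$ because $\cc{C}(\ff{D},-)$ preserves pseudolimits. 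So $\ff{D} \mapsto \cc{C}(\ff{D},\ff{L})$, with legs $(\pi_i)_*$, exhibits the pointwise pseudolimit of $\ff{y}_{(-)}\ff{X}$; by Proposition \ref{pointwisebilimit} (dualized to pseudolimits) the pointwise pseudolimit, endowed with the unique $2$-functor structure making the legs $2$-natural, is the pseudolimit in $\cc{H}om(\cc{C}^{op},\cc{C}at)$. Since $\ff{y}_\ff{L}$ already has this underlying object-assignment and legs $(\pi_i)_*$ which are $2$-natural transformations, the uniqueness clause of Proposition \ref{pointwisebilimit} forces $\ff{y}_\ff{L}$ together with $(\pi_i)_*$ to be that pseudolimit. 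Thus $\ff{y}_{(-)}$ preserves the given pseudolimit, and, this being arbitrary, all pseudolimits.

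The one point that deserves care is this last identification: that the candidate cone $\ff{y}_\ff{L} \mr{(\pi_i)_*} \ff{y}_{\ff{X}_i}$ is genuinely the pseudolimit and not merely pointwise a pseudolimit — i.e. that the pointwise $2$-functor structure from Proposition \ref{pointwisebilimit} agrees with the Yoneda $2$-functor structure on $\ff{D} \mapsto \cc{C}(\ff{D},\ff{L})$. This is exactly what the uniqueness in that proposition supplies. Equivalently, one may argue through the comparison morphism of Definition \ref{preservation}: the canonical arrow $\ff{y}_\ff{L} \mr{} \Lim{i\in \cc{I}}{\ff{y}_{\ff{X}_i}}$ is, evaluated at each $\ff{D}$, the comparison $\cc{C}(\ff{D},\ff{L}) \mr{} \Lim{i\in \cc{I}}{\cc{C}(\ff{D},\ff{X}_i)}$ for the $2$-representable $\cc{C}(\ff{D},-)$, hence invertible; and a $2$-natural transformation which is pointwise an isomorphism is an isomorphism in $\cc{H}om(\cc{C}^{op},\cc{C}at)$, so the comparison is an isomorphism.
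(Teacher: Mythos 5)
Your proposal is correct and follows exactly the route the paper intends: the result is stated as an immediate consequence of the remark that $2$-representables preserve pseudolimits by definition together with Proposition \ref{pointwisebilimit} (pointwise computation of pseudolimits in the $2$-functor $2$-categories). You have merely written out the details the paper leaves implicit, including the identification of the Yoneda $2$-functor structure with the pointwise one via the uniqueness clause (or, equivalently, the pointwise-invertible comparison morphism), which is a sound way to close that small gap.
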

Recall that small pseudolimits and pseudocolimits of locally small categories exist and are locally small, as well that the 2-category $\cc{C}at$ of small categories has all small pseudolimits and pseudocolimits \mbox{(see for example \cite{BKP}, \cite{K2}).}

\begin{sinnadastandard} \label{limincat}
We refer to the explicit construction of pseudolimits of category valued 2-functors, which is similar to the construction of pseudolimits of category-valued functors in \cite[Expos\'e VI 6.]{G3}, see full details \mbox{in \cite{mitesis}.}
\end{sinnadastandard}
It is also key to our work the explicit construction of 2-filtered \mbox{pseudocolimits}  of category valued 2-functors developed in \cite{DS}.
We recall this now.
\begin{definition} [Kennison, \cite{K}] \label{2filtered}
Let $\cc{C}$ be a 2-category. $\cc{C}$ is said to be \emph{2-filtered} if the following axioms are satisfied:

\vspace{1ex}

F0. Given two objects $\ff{C}$, $\ff{D}\in \cc{C}$, there exists an object $\ff{E} \in \cc{C}$ and arrows $\ff{C}\rightarrow \ff{E}$, $\ff{D}\rightarrow \ff{E}$.

F1. Given two arrows $\ff{C} \mrpair{\ff{f}}{\ff{g}} \ff{D}$, there exists an arrow $\ff{D}\mr{\ff{h}} \ff{E}$ and an invertible 2-cell $\alpha: \ff{h}\ff{f}\cong \ff{h}\ff{g}$.

F2. Given two 2-cells $\ff{C} \cellpairrd{}{\alpha}{\beta}{} \ff{D}$ there exists an arrow $\ff{D}\mr{\ff{h}} \ff{E}$ such that $\ff{h}\alpha=\ff{h}\beta$.

\vspace{1ex}

\noindent The dual notion of 2-cofiltered 2-category is given by the duals of axioms F0, F1 and F2.
\end{definition}

\begin{sinnadastandard} \label{defLF} {\bf Construction LL} (Dubuc-Street \cite{DS})
Let $\cc{I}$ be a 2-filtered \mbox{2-category} and $\ff{F}:\cc{I}\rightarrow \cc{C}at$ a 2-functor.
We define a category $\mathcal{L}(\ff{F})$ in two steps as follows:

\emph{\bf First step} (\cite[Definition 1.5]{DS}):

\emph{Objects}: $(C,i)$ with $C \in \ff{F}i$

\emph{Premorphisms}: A premorphism between $(C,i)$ and $(D,j)$ is a triple $(u,f,v)$ where $i \mr{u} k$, $j \mr{v} k$ in $\cc{I}$ and $\ff{F}(u)(C)\mr{f} \ff{F}(v)(D)$ in $\ff{F}k$.

\emph{Homotopies}: An homotopy between two premorphisms $(u_1,f_1,v_1)$ and $(u_2,f_2,v_2)$ is a quadruple $(w_1,w_2,\alpha,\beta)$ where $k_1\mr{w_1} k$, $k_2\mr{w_2} k$ are \mbox{1-cells} of $\mathcal{I}$ and $w_1v_1\mr{\alpha} w_2v_2$,
$w_1u_1\mr{\beta} w_2u_2$ are invertible 2-cells of $\mathcal{I}$ such that the following diagram commutes in $\ff{F}k$:
$$
\xymatrix@C=7ex
          {
           \ff{F}(w_1)\ff{F}(u_1)(C) = \ff{F}(w_1u_1)(C)
                \ar[r]^{\ff{F}(\beta)C}
                \ar@<-6ex>[d]_{\ff{F}(w_1)(f_1)}
          &
           \ff{F}(w_2u_2)(C) = \ff{F}(w_2)\ff{F}(u_2)(C)
                \ar@<6ex>[d]^{\ff{F}(w_2)(f_2)}
          \\
           \ff{F}(w_1)\ff{F}(v_1)(D) = \ff{F}(w_1v_1)(D)
                \ar[r]_{\ff{F}(\alpha)D}
          &
           \ff{F}(w_2v_2)(D) = \ff{F}(w_2)\ff{F}(v_2)(D)
          }
$$

We say that two premorphisms $f_1, f_2$ are equivalent if there is an \mbox{homotopy} between them. In that case, we write $f_1 \sim f_2$.

\vspace{1ex}

Equivalence is indeed an equivalence relation, and premorphisms can be (non uniquely) composed. Up to equivalence, composition is independent of the choice of representatives and of the choice of the composition between them. Since associativity holds and
identities exist, the following actually does define a category:

\vspace{1ex}

\emph{\bf Second step} (\cite[Definition 1.13]{DS}):

\emph{Objects}: $(C, \; i)$ with $C \in
Fi$.

\emph{Morphisms}: equivalence classes of premorphisms.

\emph{Composition}:
defined by composing representative premorphisms.
\end{sinnadastandard}

\begin{proposition}\emph{\cite[Theorem 1.19]{DS}} \label{construccionDS}
Let $\cc{I}$ be a 2-filtered \mbox{2-category,} \mbox{$\ff{F}:\cc{I}\rightarrow \cc{C}at$} a 2-functor, $i \mr{u} j$ in $\cc{I}$ and $C \mr{f} D \in \ff{F}i$.
The \mbox{following} formulas define a pseudocone $\ff{F} \Mr{\lambda} \cc{L}(\ff{F})$:
$$
\lambda_i(C) =(C,i)
\hspace{3ex}
\lambda_i(f) =[i,f,i]
\hspace{3ex}
(\lambda_u)_C =[u,\ff{F}u(C),j]
$$
which is a pseudocolimit for the 2-functor $\ff{F}$. \cqd
\end{proposition}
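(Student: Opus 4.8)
The plan is to prove the two assertions separately: first that the displayed data constitute a pseudocone $\ff{F} \Mr{\lambda} \cc{L}(\ff{F})$ (with $\cc{L}(\ff{F})$ regarded as the vertex, i.e.\ as the constant $2$-functor at it), and then that this pseudocone is universal in the sense of Definition \ref{colimits}: for every category $\ff{A}$, precomposition with $\lambda$ yields an \emph{isomorphism} of categories $\cc{C}at(\cc{L}(\ff{F}), \ff{A}) \mr{} \ff{PC}_{\cc{C}at}(\ff{F}, \ff{A})$. Throughout I would take for granted, as established in \ref{defLF}, that premorphisms compose up to homotopy and that the second step of Construction LL genuinely defines a category; I would also use freely the $2$-functoriality of $\ff{F}$ and the axioms F0--F2.

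For the pseudocone claim I would first check that each $\lambda_i \colon \ff{F}i \mr{} \cc{L}(\ff{F})$, $\lambda_i(f) = [i,f,i]$, is a functor: identities and composites of premorphisms supported over the single object $i$ are formed without appealing to $2$-filteredness, so $\lambda_i(id_C) = id_{(C,i)}$ and $\lambda_i(gf) = \lambda_i(g)\lambda_i(f)$ hold on the nose. Next I would check that, for $u \colon i \mr{} j$, the components $(\lambda_u)_C = [u, \ff{F}u(C), j]$ form a natural transformation $\lambda_i \Mr{} \lambda_j\ff{F}u$, and that each such component is invertible with inverse the class $[id_j, \ff{F}u(C), u]$ (the two composites being homotopic to identities through homotopies built from $u$). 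It then remains to verify PC0, PC1, PC2: PC0 is immediate from $\ff{F}(id_i)=id$; PC1 amounts to the homotopy $[v,\ff{F}v\ff{F}u(C),k]\circ[u,\ff{F}u(C),j] \sim [vu,\ff{F}(vu)(C),k]$ for $i \mr{u} j \mr{v} k$; and PC2 amounts to $[v,\ff{F}v(C),j] \sim [j,(\ff{F}\alpha)_C,j]\circ[u,\ff{F}u(C),j]$ for a $2$-cell $\alpha \colon u \Mr{} v$. In each case I would exhibit the relevant homotopy quadruple $(w_1,w_2,\alpha,\beta)$ (in the notation of \ref{defLF}) using F0--F2.

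For universality I would use the factorization: every morphism of $\cc{L}(\ff{F})$ has the form $[u,f,v] = (\lambda_v)_D^{-1}\circ\lambda_k(f)\circ(\lambda_u)_C$ with $u \colon i \mr{} k$, $v \colon j \mr{} k$ and $f \colon \ff{F}u(C) \mr{} \ff{F}v(D)$ in $\ff{F}k$. Hence, given a pseudocone $\mu = \{\mu_i,\mu_u\} \in \ff{PC}_{\cc{C}at}(\ff{F},\ff{A})$, any functor $H$ with $H\lambda = \mu$ is forced to satisfy $H(C,i) = \mu_i(C)$ and $H[u,f,v] = (\mu_v)_D^{-1}\circ\mu_k(f)\circ(\mu_u)_C$; this gives uniqueness for free, and for existence I would \emph{define} $H$ by these formulas and check: (a) independence of the chosen representative premorphism --- a homotopy $(w_1,w_2,\alpha,\beta)$ between $(u_1,f_1,v_1)$ and $(u_2,f_2,v_2)$ must equalise the two expressions, which is exactly where the coherence equations PC1, PC2 of $\mu$ enter, applied to $w_1,w_2,\alpha,\beta$ together with the commuting square that defines the homotopy; (b) preservation of identities (clear) and of composition, which reduces, via the factorization and the explicit composite of premorphisms in \ref{defLF}, to a further application of PC1, PC2 for $\mu$ and the defining square of that composite; (c) $H\lambda_i = \mu_i$ and $H\lambda_u = \mu_u$, immediate from the formulas and PC0. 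Finally I would treat the $2$-cell level: a modification between pseudocones $\mu$ and $\mu'$ has components $\mu_i(C) \mr{} \mu'_i(C)$, and $(C,i) \mapsto$ (that component) is a natural transformation $H \Mr{} H'$ (the modification condition gives naturality on each $[u,f,v]$ after applying the factorization), while conversely every natural transformation $H \Mr{} H'$ restricts along the $\lambda_i$ to such a modification; this last correspondence is a straightforward bijection. Together these give the isomorphism of categories, hence the universal property.

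The step I expect to be the main obstacle is (a)--(b) above --- well-definedness and multiplicativity of $H$ --- together with the dual verifications of PC1 and PC2 for $\lambda$ itself. The homotopy relation on premorphisms and the composition law of $\cc{L}(\ff{F})$ each carry auxiliary cocones and invertible $2$-cells chosen via $2$-filteredness, and the heart of the argument is to match those choices against the pseudocone coherence of $\mu$ by a careful diagram chase in the categories $\ff{F}k$; everything else (functoriality of the $\lambda_i$, invertibility of the $\lambda_u$, the correspondence between modifications and natural transformations) is routine bookkeeping. I would also note that this statement is \cite[Theorem 1.19]{DS}, so the argument can alternatively be quoted verbatim from there.
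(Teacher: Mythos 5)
Correct. The paper gives no proof of its own here — the statement is quoted as \cite[Theorem 1.19]{DS} — and your outline faithfully reconstructs that cited argument: verification of PC0--PC2 for $\lambda$, then the strict universal property via the factorization $[u,f,v]=(\lambda_v)_D^{-1}\circ\lambda_k(f)\circ(\lambda_u)_C$, with well-definedness of the induced functor checked against the homotopy relation using the pseudocone coherences, exactly as in \cite{DS}.
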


\subsection{Further results.} $ $

A. Joyal pointed to us the notion of \emph{flexible} functors, related with some of our results on pseudo colimits of representable 2-functors. We \mbox{recall} now this notion since it bears some significance for the concept of \mbox{2-pro-object} developed in this paper. Any 2-pro-object determines a 2-functor which is flexible, and some of our results find their right place stated in the context of flexible 2-functors.

\vspace{1ex}

{\bf Warning:} \emph{In this subsection 2-categories are assumed to be locally small, except the illegitimate constructions $\cc{H}om$ and $\cc{H}om_p$.}

The inclusion $\cc{H}om(\cc{C},\cc{C}at) \mr{i} \cc{H}om_p(\cc{C},\cc{C}at)$ has a left adjoint $(-)' \dashv i$, we refer the reader to \cite{BKP}. The 2-natural counit of this adjunction $\ff{F}' \Mr{\varepsilon_\ff{F}} \ff{F}$ is an equivalence in $\cc{H}om_p(\cc{C},\cc{C}at)$, with a section given by the pseudonatural unit $\ff{F} \Mr{\eta_\ff{F}} \ff{F'}$,
$\varepsilon_\ff{F} \eta_\ff{F} = id_\ff{F}$,
$\eta_\ff{F} \varepsilon_\ff{F} \cong id_\ff{F'}$, \mbox{\cite[Proposition 4.1]{BKP}}

\begin{definition}[\emph{\cite[Proposition 4.2]{BKP}}] \label{flexible}
A 2-functor $\cc{C} \mr{\ff{F}} \cc{C}at$ is \emph{flexible} if the counit
$\ff{F'} \Mr{\varepsilon_\ff{F}} \ff{F}$ has a 2-natural section $\ff{F} \Mr{\lambda} \ff{F'}$,
$\varepsilon_\ff{F} \lambda = id_\ff{F}$,
\mbox{$\lambda \varepsilon_\ff{F} \cong id_\ff{F'}$}, which determines an equivalence in $\cc{H}om(\cc{C},\cc{C}at)$.
\end{definition}

We state now a useful characterization of flexible 2-functors $\ff{F}$ independent of the left adjoint $(-)'$, the proof will appear elsewhere \cite{DD}.

\begin{proposition} \label{flexiblechar}
A 2-functor $\cc{C} \mr{\ff{F}} \cc{C}at$ is flexible
$\iff$ for all \mbox{2-functors} $\ff{G}$, the inclusion $\cc{H}om(\cc{C},\cc{C}at)(\ff{F},\ff{G}) \mr{i_\ff{G}} \cc{H}om_p(\cc{C},\cc{C}at)(\ff{F},\ff{G})$ has a retraction
$\alpha_\ff{G}$ natural in $\ff{G}$, $\alpha_\ff{G} i_\ff{G} = id$, $i_\ff{G} \alpha_\ff{G} \cong id$, which determines an equivalence of categories. \cqd
\end{proposition}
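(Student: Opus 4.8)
The plan is to reduce the statement to a property of a single \emph{strict} precomposition functor, using the $\cc{C}at$-enriched adjunction $(-)' \dashv i$ recalled above. Being $\cc{C}at$-enriched \cite{BKP}, this adjunction yields, for each 2-functor $\ff{G}$, an \emph{isomorphism of categories} $\Phi_\ff{G}\colon \cc{H}om(\cc{C},\cc{C}at)(\ff{F}',\ff{G}) \mr{\cong} \cc{H}om_p(\cc{C},\cc{C}at)(\ff{F},\ff{G})$, natural in $\ff{G}$, given by $\Phi_\ff{G}(\sigma) = i(\sigma)\circ\eta_\ff{F}$. First I would note that the triangle identity $\varepsilon_\ff{F}\eta_\ff{F} = id_\ff{F}$ gives, for every 2-natural $\tau\colon\ff{F}\Mr{}\ff{G}$, the equality $\Phi_\ff{G}(\tau\circ\varepsilon_\ff{F}) = i(\tau)$; in other words the inclusion $i_\ff{G}$ factors as $i_\ff{G} = \Phi_\ff{G}\circ\varepsilon_\ff{F}^{*}$, where $\varepsilon_\ff{F}^{*}$ denotes precomposition with the 2-natural transformation $\varepsilon_\ff{F}\colon\ff{F}'\Mr{}\ff{F}$ (we write $\ff{f}^{*}$, $\ff{f}_{*}$ for pre- and post-composition as in the Notation above). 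Since $\Phi_\ff{G}$ is an isomorphism natural in $\ff{G}$, conjugation by $\Phi$ shows that the property of $i_\ff{G}$ asserted in the proposition holds if and only if the analogous property holds for $\varepsilon_\ff{F}^{*}\colon\cc{H}om(\cc{C},\cc{C}at)(\ff{F},\ff{G})\mr{}\cc{H}om(\cc{C},\cc{C}at)(\ff{F}',\ff{G})$: that $\varepsilon_\ff{F}^{*}$ has a retraction $\beta_\ff{G}$, natural in $\ff{G}$, with $\beta_\ff{G}\varepsilon_\ff{F}^{*} = id$, $\varepsilon_\ff{F}^{*}\beta_\ff{G}\cong id$, determining an equivalence. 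So it suffices to prove that $\ff{F}$ is flexible if and only if $\varepsilon_\ff{F}^{*}$ admits such a retraction.

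For the forward implication, assume $\ff{F}$ flexible and let $\lambda\colon\ff{F}\Mr{}\ff{F}'$ be the 2-natural section of $\varepsilon_\ff{F}$ of Definition \ref{flexible}, so $\varepsilon_\ff{F}\lambda = id_\ff{F}$ and $\lambda\varepsilon_\ff{F}\cong id_{\ff{F}'}$ in $\cc{H}om(\cc{C},\cc{C}at)$. I would set $\beta_\ff{G} := \lambda^{*}$, precomposition with $\lambda$. Then $\beta_\ff{G}\varepsilon_\ff{F}^{*}$ is precomposition with $\varepsilon_\ff{F}\lambda = id_\ff{F}$, hence the identity, while $\varepsilon_\ff{F}^{*}\beta_\ff{G}$ is precomposition with $\lambda\varepsilon_\ff{F}$, hence isomorphic to the identity by whiskering with the invertible modification $\lambda\varepsilon_\ff{F}\cong id_{\ff{F}'}$; naturality in $\ff{G}$ is automatic, precomposition in $\ff{F},\ff{F}'$ and postcomposition in $\ff{G}$ commuting. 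Together these make $\varepsilon_\ff{F}^{*}$ and $\beta_\ff{G}$ mutually quasi-inverse, so equivalences of categories.

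For the converse, suppose $\varepsilon_\ff{F}^{*}$ has such a retraction $\beta_\ff{G}$. Taking $\ff{G} = \ff{F}'$, I would set $\lambda := \beta_{\ff{F}'}(id_{\ff{F}'})\colon\ff{F}\Mr{}\ff{F}'$, a 2-natural transformation. Naturality of $\beta$ in $\ff{G}$, applied to the 1-cell $\varepsilon_\ff{F}\colon\ff{F}'\mr{}\ff{F}$, reads $(\varepsilon_\ff{F})_{*}\circ\beta_{\ff{F}'} = \beta_{\ff{F}}\circ(\varepsilon_\ff{F})_{*}$; evaluating at $id_{\ff{F}'}$ and using $(\varepsilon_\ff{F})_{*}(id_{\ff{F}'}) = \varepsilon_\ff{F} = \varepsilon_\ff{F}^{*}(id_\ff{F})$ together with $\beta_\ff{F}\varepsilon_\ff{F}^{*} = id$ gives $\varepsilon_\ff{F}\lambda = id_\ff{F}$. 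On the other hand, the component at $id_{\ff{F}'}$ of the isomorphism $\varepsilon_\ff{F}^{*}\beta_{\ff{F}'}\cong id$ is an invertible modification $\lambda\varepsilon_\ff{F} = \varepsilon_\ff{F}^{*}(\lambda)\cong id_{\ff{F}'}$. Since $\varepsilon_\ff{F}^{*}\beta_\ff{G}$ is an equivalence, $\lambda$ and $\varepsilon_\ff{F}$ are mutually quasi-inverse in $\cc{H}om(\cc{C},\cc{C}at)$, so $\lambda$ is a 2-natural section of $\varepsilon_\ff{F}$ determining an equivalence, i.e. $\ff{F}$ is flexible.

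The step I expect to be the main obstacle is not conceptual but the bookkeeping: one must keep the variances of all the precomposition and postcomposition functors straight, and — most delicately — check that the clause ``natural in $\ff{G}$'', including the \emph{isomorphism} parts $i_\ff{G}\alpha_\ff{G}\cong id$, is preserved when transposing along the natural isomorphism $\Phi$ and when whiskering with the modification $\lambda\varepsilon_\ff{F}\cong id_{\ff{F}'}$; this is a routine but somewhat tedious interchange computation. One also uses throughout that $\Phi_\ff{G}$ is an isomorphism of hom-\emph{categories}, i.e. that $(-)'\dashv i$ is genuinely $\cc{C}at$-enriched and not merely an ordinary adjunction.
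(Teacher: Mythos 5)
The paper itself contains no proof of Proposition \ref{flexiblechar}: it is stated with the remark that the proof will appear elsewhere (\cite{DD}), so there is no in-paper argument to compare yours against. Judged on its own, your proof is correct and complete in outline. The factorization $i_\ff{G}=\Phi_\ff{G}\circ\varepsilon_\ff{F}^{*}$ follows from the identity $\varepsilon_\ff{F}\eta_\ff{F}=id_\ff{F}$ that the paper recalls from \cite{BKP}; the forward direction via $\beta_\ff{G}=\lambda^{*}$ is immediate from Definition \ref{flexible}; and the converse --- taking $\ff{G}=\ff{F}'$, setting $\lambda=\beta_{\ff{F}'}(id_{\ff{F}'})$, using naturality of $\beta$ at the 1-cell $\varepsilon_\ff{F}$ to get $\varepsilon_\ff{F}\lambda=id_\ff{F}$, and evaluating the isomorphism $\varepsilon_\ff{F}^{*}\beta_{\ff{F}'}\cong id$ at $id_{\ff{F}'}$ to get the invertible modification $\lambda\varepsilon_\ff{F}\cong id_{\ff{F}'}$ --- is a clean Yoneda-style argument landing exactly on Definition \ref{flexible} (note that since $\lambda\varepsilon_\ff{F}$ and $id_{\ff{F}'}$ are 2-natural, an invertible 2-cell between them in the hom-category is automatically a 2-cell of $\cc{H}om(\cc{C},\cc{C}at)$, as required).

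Two small points are worth making explicit. First, you rely on the adjunction $(-)'\dashv i$ being genuinely $\cc{C}at$-enriched, so that $\Phi_\ff{G}$ is an isomorphism of hom-\emph{categories} 2-natural in $\ff{G}$; the paper only says ``has a left adjoint'', but the enriched form is indeed what \cite{BKP} provides, and you flag this correctly. Second, ``natural in $\ff{G}$'' in the statement can only mean naturality with respect to 2-natural transformations $\ff{G}\Rightarrow\ff{H}$ (the codomain of $\alpha_\ff{G}$ is not functorial in pseudonatural ones), and this is exactly the naturality used both in your conjugation by $\Phi$ and in your application of $(\varepsilon_\ff{F})_{*}$ in the converse, so the bookkeeping you defer does go through.
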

Let $\cc{H}om(\cc{C},\cc{C}at)_f$ and  $\cc{H}om_p(\cc{C},\cc{C}at)_f$ be the subcategories whose objects are the flexible 2-functors. We have the following corollaries:
\begin{corollary} \label{flexible2=p}
The 2-categories $\cc{H}om(\cc{C},\cc{C}at)_f$ and $\cc{H}om_p(\cc{C},\cc{C}at)_f$ are \emph{pseudoequivalent} in the sense they have the same objects and retract equivalent hom categories. \cqd
\end{corollary}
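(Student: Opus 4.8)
The statement about objects holds by definition: both $\cc{H}om(\cc{C},\cc{C}at)_f$ and $\cc{H}om_p(\cc{C},\cc{C}at)_f$ are the sub-$2$-categories whose objects are the flexible $2$-functors $\cc{C} \mr{} \cc{C}at$, so they literally have the same objects and the inclusion $\cc{H}om(\cc{C},\cc{C}at)_f \mr{} \cc{H}om_p(\cc{C},\cc{C}at)_f$ is the identity on objects. Hence the whole content reduces to the claim that, for every pair of flexible $2$-functors $\ff{F}$ and $\ff{G}$, the inclusion $i_\ff{G}\colon \cc{H}om(\cc{C},\cc{C}at)(\ff{F},\ff{G}) \mr{} \cc{H}om_p(\cc{C},\cc{C}at)(\ff{F},\ff{G})$ of hom categories admits a retraction which is an equivalence of categories.

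The plan is to obtain this directly from Proposition \ref{flexiblechar}. Since $\ff{F}$ is flexible, that proposition provides, for \emph{every} $2$-functor $\ff{G}$, a functor $\alpha_\ff{G}$ with $\alpha_\ff{G}\, i_\ff{G} = id$ and $i_\ff{G}\, \alpha_\ff{G} \cong id$, natural in $\ff{G}$, and such that $i_\ff{G}$ (and hence $\alpha_\ff{G}$) is an equivalence of categories. Specializing $\ff{G}$ to a flexible $2$-functor immediately yields the desired retract equivalence of hom categories, which is exactly what ``retract equivalent hom categories'' means; this already establishes the corollary as stated.

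It is worth recording what the naturality in $\ff{G}$ buys beyond the pointwise statement. The inclusion $\cc{H}om(\cc{C},\cc{C}at)_f \mr{} \cc{H}om_p(\cc{C},\cc{C}at)_f$ is a strict $2$-functor, identity on objects and equal to $i_\ff{G}$ on each hom category, and the naturality of $\alpha_\ff{G}$ in $\ff{G}$ says precisely that the $\alpha_\ff{G}$ commute with post-composition by $2$-natural transformations; together with the isomorphisms $i_\ff{G}\,\alpha_\ff{G} \cong id$ this lets the family $\{\alpha_\ff{G}\}$ be read as a weak (identity-on-objects) retraction of the inclusion $2$-functor. I do not expect any genuine obstacle here: the only care needed is to note that Proposition \ref{flexiblechar} asserts naturality only in the codomain variable $\ff{G}$, so one verifies compatibility with post-composition rather than with arbitrary whiskering; since in $\cc{H}om(\cc{C},\cc{C}at)$ and $\cc{H}om_p(\cc{C},\cc{C}at)$ every $1$- and $2$-cell is assembled from such data, this suffices, and the verification is a routine diagram chase.
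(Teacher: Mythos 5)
Your proof is correct and is exactly the argument the paper intends: the corollary is stated as an immediate consequence of Proposition \ref{flexiblechar} (the paper gives no further proof), obtained by noting the two sub-2-categories share the flexible 2-functors as objects and then specializing the retract equivalence $\alpha_\ff{G} i_\ff{G} = id$, $i_\ff{G}\alpha_\ff{G} \cong id$ to flexible codomains $\ff{G}$. Your additional remarks on what naturality in $\ff{G}$ provides are accurate but not needed for the statement as given.
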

We mention that following the usual lines (based in the axiom of choice) in the proof of \ref{eqsiif&f}, it can be seen that the inclusion 2-functor
$\cc{H}om(\cc{C},\cc{C}at)_f \mr{} \cc{H}om_p(\cc{C},\cc{C}at)_f$ has the identity (on objects) as a retraction quasi-inverse \emph{pseudofunctor}, with the equality as the invertible pseudonatural transformation $\ff{F} \mr{=} \ff{F}$ in  $\cc{H}om_p(\cc{C},\cc{C}at)_f$.

An important property of flexible 2-functors, false in general, is the \mbox{following}:
\begin{corollary}\label{eqencadaCeseq}
Let $\theta:\ff{G}\Rightarrow \ff{F}\in \cc{H}om(\cc{C},\cc{C}at)_f$ be such that $\theta_\ff{C}:\ff{G}\ff{C}\rightarrow \ff{F}\ff{C}$ is an equivalence of categories for each $\ff{C}\in \cc{C}$. Then, $\theta$ is an equivalence in $\cc{H}om(\cc{C},\cc{C}at)_f$.
\end{corollary}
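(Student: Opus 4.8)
The plan is to use two facts: that a $2$-natural transformation which is a pointwise equivalence is automatically an equivalence in the \emph{pseudo} $2$-functor $2$-category $\cc{H}om_p(\cc{C},\cc{C}at)$, and that flexibility of the source and target lets one replace a pseudonatural quasi-inverse by a $2$-natural one. (The statement is genuinely false without flexibility: in general such a $\theta$ is an equivalence in $\cc{H}om_p(\cc{C},\cc{C}at)$ but not in $\cc{H}om(\cc{C},\cc{C}at)$.) \emph{Step 1.} First I would show that $\theta$ is an equivalence in $\cc{H}om_p(\cc{C},\cc{C}at)$. Choosing (axiom of choice) for each $\ff{C}$ a quasi-inverse $\ff{F}\ff{C}\mr{\phi_\ff{C}}\ff{G}\ff{C}$ together with invertible $2$-cells $\phi_\ff{C}\theta_\ff{C}\cong id$ and $\theta_\ff{C}\phi_\ff{C}\cong id$, one builds, for each $\ff{f}\colon\ff{C}\to\ff{D}$ in $\cc{C}$, an invertible $2$-cell $\phi_\ff{f}\colon\ff{G}\ff{f}\,\phi_\ff{C}\Rightarrow\phi_\ff{D}\,\ff{F}\ff{f}$ via the chain $\ff{G}\ff{f}\,\phi_\ff{C}\cong\phi_\ff{D}\theta_\ff{D}\ff{G}\ff{f}\,\phi_\ff{C}=\phi_\ff{D}\ff{F}\ff{f}\,\theta_\ff{C}\phi_\ff{C}\cong\phi_\ff{D}\ff{F}\ff{f}$, where the middle equality is the $2$-naturality of $\theta$ (here $\theta_\ff{f}=id$). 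Checking PN0--PN2 is routine, as is verifying that the chosen isomorphisms assemble into invertible modifications $\phi\theta\cong id_\ff{G}$ and $\theta\phi\cong id_\ff{F}$. Since $\ff{F}$ and $\ff{G}$ are flexible, $\phi$ lies in $\cc{H}om_p(\cc{C},\cc{C}at)_f$, so $\theta$ is already an equivalence there.

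\emph{Step 2.} Next I would transport the quasi-inverse to $\cc{H}om(\cc{C},\cc{C}at)$. By Proposition \ref{flexiblechar} applied to the flexible $\ff{F}$, the inclusion $\cc{H}om(\cc{C},\cc{C}at)(\ff{F},\ff{G})\mr{i_\ff{G}}\cc{H}om_p(\cc{C},\cc{C}at)(\ff{F},\ff{G})$ has a retraction $\alpha_\ff{G}$ with $i_\ff{G}\alpha_\ff{G}\cong id$. Put $\psi:=\alpha_\ff{G}(\phi)$, a $2$-natural transformation $\ff{F}\Rightarrow\ff{G}$; then $i_\ff{G}\psi\cong\phi$ in $\cc{H}om_p(\cc{C},\cc{C}at)$, so $\psi$ is again a quasi-inverse of $\theta$ there, i.e.\ there are invertible modifications $\psi\theta\cong id_\ff{G}$ and $\theta\psi\cong id_\ff{F}$ in $\cc{H}om_p(\cc{C},\cc{C}at)$. (Equivalently, one may invoke the retraction quasi-inverse pseudofunctor $\cc{H}om_p(\cc{C},\cc{C}at)_f\to\cc{H}om(\cc{C},\cc{C}at)_f$ mentioned after Corollary \ref{flexible2=p} and the fact that pseudofunctors preserve equivalences.)

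\emph{Step 3.} Finally I would observe that $\psi\theta$, $\theta\psi$, $id_\ff{G}$ and $id_\ff{F}$ are all $2$-natural, and that for $2$-natural $\sigma,\tau$ the equation defining a modification $\sigma\Rightarrow\tau$ in $\cc{H}om_p(\cc{C},\cc{C}at)$ collapses (since $\sigma_\ff{f}=\tau_\ff{f}=id$) to the equation defining a modification in $\cc{H}om(\cc{C},\cc{C}at)$. Hence the two invertible modifications of Step 2 already live in $\cc{H}om(\cc{C},\cc{C}at)$, and since $\ff{F},\ff{G}$ are flexible, in $\cc{H}om(\cc{C},\cc{C}at)_f$. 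Therefore $\theta$ is an equivalence in $\cc{H}om(\cc{C},\cc{C}at)_f$ with quasi-inverse $\psi$.

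The main obstacle is Step 1: assembling the pointwise quasi-inverses into a genuine pseudonatural transformation and verifying its coherence. Although this is a classical argument, it is the only part of the proof that requires real bookkeeping (and the axiom of choice); Steps 2 and 3 are formal transports along the retract-equivalences furnished by flexibility.
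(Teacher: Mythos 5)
Your proof follows the paper's argument exactly: first produce a pseudonatural quasi-inverse from the pointwise equivalences, then use Proposition \ref{flexiblechar} (flexibility of $\ff{F}$) to replace it by a $2$-natural $\psi$ isomorphic to it in $\cc{H}om_p$, and finally observe that the resulting invertible modifications relate $2$-natural transformations and hence already satisfy the strict modification equation, so they live in $\cc{H}om(\cc{C},\cc{C}at)$. The only point worth flagging in your Step 1 is that the pointwise quasi-inverses should be chosen as adjoint equivalences so that PN0 and the modification axioms for $\phi\theta\cong id_\ff{G}$, $\theta\phi\cong id_\ff{F}$ come out exactly; this is part of the routine bookkeeping you acknowledge, and the paper glosses over it as well.
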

\begin{proof}
It is easy to check that there is a pseudonatural transformation \mbox{$\eta':\ff{F}\Rightarrow \ff{G}$} such that $\theta\eta'\cong \ff{F}$ and $\eta'\theta\cong \ff{G}$ in $\cc{H}om_p(\ff{F},\ff{F})$ and $\cc{H}om_p(\ff{G},\ff{G})$ respectively. Now, by \ref{flexiblechar}, there is a 2-natural transformation $\eta:\ff{F}\Rightarrow \ff{G}$ such that $\eta\cong \eta'$ in $\cc{H}om_p(\ff{F},\ff{G})$. Then, $\theta\eta\cong \ff{F}$ and $\eta\theta\cong \ff{G}$ in $\cc{H}om(\ff{F},\ff{F})$ and $\cc{H}om(\ff{G},\ff{G})$ respectively and so $\theta$ is an equivalence in $\cc{H}om(\cc{C},\cc{C}at)$.
\end{proof}

\begin{proposition} \label{colimflexible}
Small pseudocolimits of flexible 2-functors are \mbox{flexible.}
\end{proposition}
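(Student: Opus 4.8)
The plan is to verify flexibility of $\ff{F}:=\coLim{i\in\cc{I}}{\ff{F}_i}$ (the pseudocolimit in $\cc{H}om(\cc{C},\cc{C}at)$ of a small diagram $\cc{I}\mr{\ff{F}_\bullet}\cc{H}om(\cc{C},\cc{C}at)$, $i\mapsto\ff{F}_i$, all $\ff{F}_i$ flexible) by means of the characterization in Proposition \ref{flexiblechar}, rather than by exhibiting a $2$-natural section of $\varepsilon_\ff{F}$ directly. First I would record the geometric input: by Proposition \ref{pointwisebilimit} the universal pseudocone $\ff{F}_i\mr{\lambda_i}\ff{F}$ is computed pointwise in $\cc{C}at$, and the very same data is also a pseudocolimit pseudocone for $\ff{F}_\bullet$ regarded as a diagram in $\cc{H}om_p(\cc{C},\cc{C}at)$. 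Hence, for every $2$-functor $\ff{G}$, precomposition with the $\lambda_i$ yields isomorphisms of categories, natural in $\ff{G}$,
$$
\cc{H}om(\cc{C},\cc{C}at)(\ff{F},\ff{G})\;\mr{\cong}\;\Lim{i\in\cc{I}^{op}}{\cc{H}om(\cc{C},\cc{C}at)(\ff{F}_i,\ff{G})},\qquad
\cc{H}om_p(\cc{C},\cc{C}at)(\ff{F},\ff{G})\;\mr{\cong}\;\Lim{i\in\cc{I}^{op}}{\cc{H}om_p(\cc{C},\cc{C}at)(\ff{F}_i,\ff{G})},
$$
and — because the inclusion of strict into pseudonatural transformations commutes both with precomposition by the $2$-natural $\lambda_i$ and with the transition functors $(\ff{F}u)^*$ — these isomorphisms identify the inclusion $\iota_\ff{G}$ (the map $i_\ff{G}$ of Proposition \ref{flexiblechar} for $\ff{F}$) with the pseudolimit $\Lim{i}{\iota^i_\ff{G}}$ of the corresponding inclusions $\iota^i_\ff{G}$ for the $\ff{F}_i$.

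Next, using flexibility of each $\ff{F}_i$ and Proposition \ref{flexiblechar}, each $\iota^i_\ff{G}$ has a retraction $r^i_\ff{G}$, natural in $\ff{G}$, with $r^i_\ff{G}\iota^i_\ff{G}=id$, $\iota^i_\ff{G}r^i_\ff{G}\cong id$, which is an equivalence. The key observation is that $\iota^i_\ff{G}$ is fully faithful and injective on objects (a modification between $2$-natural transformations is literally the same thing whether the transformations are regarded as strict or as pseudonatural), so such a retraction is unique up to a unique natural isomorphism, and $(\iota^i_\ff{G},r^i_\ff{G})$ can be upgraded to an adjoint equivalence with identity unit. Transporting the (strict) naturality of the family $\{\iota^i_\ff{G}\}_i$ in $i\in\cc{I}^{op}$ across this uniqueness endows $\{r^i_\ff{G}\}_i$ with an automatically coherent pseudonatural structure over $\cc{I}^{op}$, still natural in $\ff{G}$; i.e. $r^\bullet_\ff{G}$ becomes a morphism of $\cc{I}^{op}$-diagrams of categories.

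Finally, since the pseudolimit $\Lim{i\in\cc{I}^{op}}{(-)}$ is functorial on pseudonatural transformations of diagrams — being, dually, the object representing pseudocones, $\ff{PC}_\cc{A}(\ff{F},\ff{A})=\cc{H}om_p(\cc{I},\cc{A})(\ff{F},\ff{A})$ (Definition \ref{pseudocone}, Remark \ref{PCbifunctor}) — I would set $r_\ff{G}:=\Lim{i}{r^i_\ff{G}}$ and check that it is a retraction of $\iota_\ff{G}$ which is an equivalence of categories: taking pseudolimits preserves composition and identities, so $r_\ff{G}\iota_\ff{G}=\Lim{i}{(r^i_\ff{G}\iota^i_\ff{G})}=id$ — the adjoint-equivalence data with identity units is precisely what makes the structure $2$-cells of $r^\bullet_\ff{G}$ restrict to identities on the image of $\iota^\bullet_\ff{G}$, so this composite is literally the identity, not merely isomorphic to it — while $\iota_\ff{G}r_\ff{G}=\Lim{i}{(\iota^i_\ff{G}r^i_\ff{G})}\cong id$. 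Naturality in $\ff{G}$ is inherited throughout, so Proposition \ref{flexiblechar} applies and $\ff{F}$ is flexible.

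The main obstacle is the middle step: the retractions $r^i_\ff{G}$ are produced index by index with no a priori compatibility, and they must be promoted to a genuine pseudonatural transformation of $\cc{I}^{op}$-diagrams before their pseudolimit can be formed — this is where uniqueness of retractions of fully faithful functors is indispensable, and it is also what forces the careful choice of adjoint-equivalence data so that the retraction of $\iota_\ff{G}$ obtained at the end is \emph{strict}, as Proposition \ref{flexiblechar} demands. (An alternative route, using that the left adjoint $(-)'$ preserves pseudocolimits so that $\ff{F}'\cong\coLim{i}{\ff{F}_i'}$ and $\varepsilon_\ff{F}\cong\coLim{i}{\varepsilon_{\ff{F}_i}}$, runs into the same difficulty in reverse, namely strictifying a quasi-inverse of $\varepsilon_\ff{F}$ to an honest $2$-natural section.)
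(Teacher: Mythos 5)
Your proposal is correct and follows essentially the same route as the paper: the paper's proof consists precisely of the chain of isomorphisms $\cc{A}(\ff{F},\ff{G})\cong\Lim{j}{\cc{A}(\ff{F}_j,\ff{G})}\mr{i}\Lim{j}{\cc{A}_p(\ff{F}_j,\ff{G})}\cong\cc{A}_p(\ff{F},\ff{G})$, identifying $i$ as the pseudolimit of the retract equivalences given by flexibility of each $\ff{F}_j$, and then asserting that ``it is not difficult to check'' that such a pseudolimit is again a retract equivalence. Your middle and final paragraphs (assembling the index-by-index retractions into a coherent pseudonatural family via uniqueness of retractions of the fully faithful inclusions, and arranging the adjoint-equivalence data so that the resulting retraction of $i_\ff{G}$ is strict) are exactly the content of that unproved check, worked out in detail.
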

\begin{proof}
Let $\ff{F} = \coLim{j \in \cc{I}}{\ff{F}_j}$, where each $\ff{F}_j$ is flexible, and let $\ff{G}$ be any other 2-functor. Set $\cc{A} = \cc{H}om(\cc{C},\cc{C}at)$ and
$\cc{A}_p = \cc{H}om_p(\cc{C},\cc{C}at)$. Then:
$$
\cc{A}(\ff{F}, \ff{G}) \cong
\Lim{j \in \cc{I}}{\cc{A}(\ff{F}_j,\, \ff{G})} \mr{i}
\Lim{j \in \cc{I}}{\cc{A}_p(\ff{F}_j,\, \ff{G})} \cong
\cc{A}_p(\ff{F}, \ff{G}).
$$
The two isomorphisms are given by definition  \ref{colimits}. The arrow $i$ is the pseudolimit of the equivalences with retraction quasi-inverses corresponding to each $\ff{F}_j$. It is not difficult to check that $i$  is also such an equivalence.
\end{proof}
It follows also from \ref{flexiblechar} that the pseudo-Yoneda lemma (\ref{pseudoYoneda}, \ref{repflexible}) says that the representable 2-functors are flexible, so we have:
\begin{corollary}\label{pseudo=2forpro}
Small pseudocolimits of representable 2-functors are \mbox{flexible.} \cqd
\end{corollary}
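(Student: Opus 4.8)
The plan is to obtain this as an immediate consequence of Proposition~\ref{colimflexible}, once we know that representable 2-functors are flexible.

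\textbf{Step 1: representables are flexible.} Fix a locally small 2-category $\cc{C}$ and an object $\ff{C}\in\cc{C}$. By Corollary~\ref{repflexible}, for every 2-functor $\ff{F}$ the inclusion $i\colon \cc{H}om(\cc{C},\cc{C}at)(\cc{C}(\ff{C},-),\ff{F})\mr{}\cc{H}om_p(\cc{C},\cc{C}at)(\cc{C}(\ff{C},-),\ff{F})$ has a retraction $\alpha$, natural in $\ff{F}$, with $\alpha\, i=id$ and $i\,\alpha\cong id$, which is an equivalence of categories. This is precisely the hypothesis in the characterization of flexibility given by Proposition~\ref{flexiblechar} (applied with $\ff{F}$ replaced by the representable $\cc{C}(\ff{C},-)$), so $\cc{C}(\ff{C},-)$ is flexible.

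\textbf{Step 2: apply stability under pseudocolimits.} If $\ff{F}=\coLim{j\in\cc{I}}{\cc{C}(\ff{C}_j,-)}$ is a small pseudocolimit of representable 2-functors, then each $\cc{C}(\ff{C}_j,-)$ is flexible by Step~1, and Proposition~\ref{colimflexible} asserts that small pseudocolimits of flexible 2-functors are flexible; hence $\ff{F}$ is flexible.

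I do not expect any genuine obstacle, since the statement is a formal combination of \ref{repflexible}, \ref{flexiblechar} and \ref{colimflexible}. The only point requiring care lies inside the proof of Proposition~\ref{colimflexible}: one must check that the pseudolimit of the retract-equivalences $\cc{H}om(\cc{C},\cc{C}at)(\cc{C}(\ff{C}_j,-),\ff{G})\mr{}\cc{H}om_p(\cc{C},\cc{C}at)(\cc{C}(\ff{C}_j,-),\ff{G})$ is again a retract-equivalence, so that composing it with the two isomorphisms from Definition~\ref{colimits} produces the retract-equivalence $\cc{H}om(\cc{C},\cc{C}at)(\ff{F},\ff{G})\mr{}\cc{H}om_p(\cc{C},\cc{C}at)(\ff{F},\ff{G})$ demanded by Proposition~\ref{flexiblechar}; this verification is exactly what is already carried out there, so nothing new is needed.
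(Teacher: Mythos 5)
Your proof is correct and follows exactly the route the paper takes: the sentence preceding the corollary notes that \ref{flexiblechar} together with the pseudo-Yoneda lemma (\ref{pseudoYoneda}, \ref{repflexible}) shows representables are flexible, and the corollary is then an immediate application of Proposition \ref{colimflexible}. Nothing to add.
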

Note that  \ref{colimflexible} and \ref{pseudo=2forpro} hold for any pseudocolimit that may exist.
\section{2-Pro-objects}\label{2-Pro-objects}

{\bf Warning:} \emph{In this section 2-categories are assumed to be locally small, except illegitimate constructions as $\cc{H}om$, $\cc{H}om_p$ or
$2\hbox{-}\cc{CAT}$.}

\vspace{1ex}

The main results of this paper are in this section. In the first subsection we define the 2-category of 2-pro-objects of a 2-category $\cc{C}$ and establish the basic formula for the morphisms and 2-cells of this \mbox{2-category.} Then in the next subsection we develop the notion of a \mbox{2-cell} in $\cc{C}$ \emph{representing} a 2-cell in $\Pro{C}$,  inspired in  the 1-dimensional notion of an arrow  representing a morphism of pro-objects found in \cite{AM}. We use this in the third subsection to construct the $2$-filtered category which serves as the index \mbox{2-category} for the 2-filtered pseudolimit of \mbox{2-pro-objects.} This is also inspired in a \mbox{construction} for the same purpose found in \cite{AM}. We were forced to have recourse to this complicated construction because the conceptual treatment of this problem found in \cite{G2} does not apply in the 2-category case. This is so because a \mbox{2-functor} is not the pseudocolimit of 2-representables indexed by its 2-category of elements. 
Finally, in the last
subsection we prove the universal properties of $\Pro{C}$.

\subsection{Definition of the 2-category of 2-pro-objects}
$ $

In this subsection we define the 2-category of 2-pro-objects of a fixed \mbox{2-category} and prove its basic properties. A 2-pro-object over a \mbox{2-category} $\cc{C}$ will be a small \mbox{2-cofiltered} diagram in $\cc{C}$ and it will be the pseudolimit of it's own diagram in the 2-category $\Pro{C}$.

\begin{definition} \label{2proc}
Let $\cc{C}$ be a 2-category. We define the 2-category of \mbox{2-pro-objects} of $\cc{C}$, which we denote by $2$-$\cc{P}ro(\cc{C})$, as follows:
\begin{enumerate}
\item
Its objects are the 2-functors
$\cc{I}^{op}\mr{\ff{X}}\cc{C}$,
$\ff{X} = (\ff{X}_i,\, \ff{X}_u,\, \ff{X}_\alpha)_{i,\, u,\, \alpha \in \cc{I}}$, with
$\cc{I}$ a small 2-filtered  2-category. Often we are going to abuse the notation by saying $\ff{X} = (\ff{X}_i)_{i\in \cc{I}}$.
\item
If $\ff{X}=(\ff{X}_i)_{i\in \cc{I}}$ and $\ff{Y}=(\ff{Y}_j)_{j\in \cc{J}}$ are two 2-pro-objects,
$$
\Pro{C}(\ff{X},\ff{Y})=\cc{H}om(\cc{C},\, \cc{C}at)^{op}(\Lim{i \in \cc{I}^{op}}{\cc{C}(\ff{X}_i,-)}, \Lim{j \in \cc{J}^{op}}{\cc{C}(\ff{Y}_j,-)})
$$
$$
=\cc{H}om(\cc{C},\, \cc{C}at)(\coLim{j \in \cc{J}}{\cc{C}(\ff{Y}_j,-)},\coLim{i \in \cc{I}}{\cc{C}(\ff{X}_i,-)})
$$
\end{enumerate}

The compositions are given by the corresponding compositions in the \mbox{2-category} $\cc{H}om(\cc{C},\, \cc{C}at)^{op}$ so it is easy to check that $2$-$\cc{P}ro(\cc{C})$ is indeed a 2-category.
\end{definition}

\begin{proposition}\label{eqconloscolimderep}
By definition there is a 2-fully-faithful \mbox{2-functor}
\mbox{$2\hbox{-}\cc{P}ro(\cc{C}) \mr{\ff{L}} \cc{H}om(\cc{C},\, \cc{C}at)^{op}$.}
Thus, there is a contravariant \mbox{2-equivalence} of 2-categories \mbox{$2$-$\cc{P}ro(\cc{C}) \mr{\ff{L}} \cc{H}om(\cc{C},\, \cc{C}at)^{op}_{fc}$}, where $\cc{H}om(\cc{C},\, \cc{C}at)_{fc}$ stands for the full subcategory of $\cc{H}om(\cc{C},\, \cc{C}at)$ whose objects are those \mbox{2-functors} which are small 2-filtered pseudocolimits of \mbox{representable} 2-functors. \mbox{However,} it is important to note that this equivalence is not injective on \mbox{objects.} \cqd
\end{proposition}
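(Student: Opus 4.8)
The plan is to read essentially everything off Definition \ref{2proc} and then invoke the standard criterion for 2-equivalences, Proposition \ref{eqsiif&f}. First I would make the 2-functor $\ff{L}$ explicit: on objects it sends a 2-pro-object $\ff{X} = (\ff{X}_i)_{i \in \cc{I}}$ to the 2-functor $\coLim{i \in \cc{I}}{\cc{C}(\ff{X}_i,-)}$, regarded as an object of $\cc{H}om(\cc{C},\cc{C}at)^{op}$ (equivalently $\Lim{i \in \cc{I}^{op}}{\cc{C}(\ff{X}_i,-)}$); and on each hom category it is, by the very definition of $\Pro{C}(\ff{X},\ff{Y})$, the identity functor onto $\cc{H}om(\cc{C},\cc{C}at)^{op}(\ff{L}\ff{X},\ff{L}\ff{Y})$. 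Since the compositions of $\Pro{C}$ were declared to be exactly those of $\cc{H}om(\cc{C},\cc{C}at)^{op}$, this $\ff{L}$ is a (strict) 2-functor, and each $\ff{L}_{\ff{X},\ff{Y}}$ is an isomorphism (indeed an equality) of categories; so $\ff{L}$ is 2-fully-faithful in the sense of \ref{2f&f}. This is the first assertion, and it is purely a matter of unwinding Definition \ref{2proc}.

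Next I would identify the (essential) image of $\ff{L}$ with $\cc{H}om(\cc{C},\cc{C}at)_{fc}$. By construction every $\ff{L}\ff{X}$ is a small 2-filtered pseudocolimit of representables, so the image is contained in $\cc{H}om(\cc{C},\cc{C}at)^{op}_{fc}$. Conversely, given a small 2-filtered 2-category $\cc{I}$ and a 2-functor $\ff{D}\colon\cc{I}\mr{}\cc{H}om(\cc{C},\cc{C}at)$ taking values among representables, the 2-fully-faithfulness of the Yoneda 2-functor (Corollary \ref{yonedaff}) lets me lift $\ff{D}$ to a 2-functor $\ff{X}\colon\cc{I}^{op}\mr{}\cc{C}$ such that $i\mapsto\cc{C}(\ff{X}_i,-)$ recovers $\ff{D}$ on the nose; then $\coLim{i}{\ff{D}i} = \ff{L}\ff{X}$ lies in the image. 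Hence the corestriction $\ff{L}\colon\Pro{C}\mr{}\cc{H}om(\cc{C},\cc{C}at)^{op}_{fc}$ is 2-fully-faithful and surjective on objects, in particular essentially surjective on objects, so by Proposition \ref{eqsiif&f} it is a 2-equivalence of 2-categories. As the target carries the opposite 1-cell structure, this is precisely the asserted contravariant 2-equivalence between $\Pro{C}$ and $\cc{H}om(\cc{C},\cc{C}at)_{fc}$.

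Finally, for the last clause I would exhibit two distinct 2-pro-objects with the same image. Any 2-category with a terminal object is 2-filtered, and a pseudocolimit indexed by such a 2-category is computed at the terminal object; so for a fixed $\ff{C}\in\cc{C}$ both the terminal 2-category $\mathbf{1}$ and, say, the 2-category $\{0\to 1\}$ (terminal object $1$, only identity 2-cells) are small 2-filtered, and the corresponding constant diagrams at $\ff{C}$ are genuinely different objects of $\Pro{C}$ that $\ff{L}$ both send to $\cc{C}(\ff{C},-)$. Thus $\ff{L}$ is not injective on objects, i.e. the equivalence above is not an isomorphism of 2-categories.

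There is no hard computation here; the proposition is a repackaging of Definition \ref{2proc}. The only points that need a little care, and hence the closest thing to an obstacle, are the variance bookkeeping — keeping straight the passage between $\cc{I}$ and $\cc{I}^{op}$ and the $(-)^{op}$ on $\cc{H}om(\cc{C},\cc{C}at)$ — and the use of Corollary \ref{yonedaff} to see that a 2-filtered diagram of representable 2-functors arises, uniquely, from a 2-filtered diagram in $\cc{C}$, which is exactly what makes the image of $\ff{L}$ coincide with $\cc{H}om(\cc{C},\cc{C}at)_{fc}$ rather than merely land inside it.
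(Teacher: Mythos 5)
Your argument is correct and is exactly the unwinding the paper intends: the paper offers no proof beyond ``by definition,'' and your use of \ref{yonedaff} to lift a $2$-filtered diagram of representables back into $\cc{C}$ and of \ref{eqsiif&f} to conclude the $2$-equivalence is the standard completion of that. The only caveat is cosmetic: depending on how one fixes the pseudocolimits defining $\ff{L}$, your two constant diagrams may have merely isomorphic rather than literally equal images (and a representable value of $\ff{D}$ may only be isomorphic to a strict hom-functor, so the lift need not recover $\ff{D}$ ``on the nose''), but this still witnesses that $\ff{L}$ is an equivalence and not an isomorphism, which is all the last clause asserts.
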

From Corollary \ref{pseudo=2forpro} it follows:
\begin{proposition} \label{proisflexible}
For any 2-pro-object $\ff{X}$, the corresponding \mbox{2-functor} \mbox{$\ff{LX}$ is flexible.} \cqd

\end{proposition}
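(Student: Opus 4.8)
The plan is direct: unwind the definition of $\ff{L}$ and then invoke Corollary \ref{pseudo=2forpro}. First I would recall that, by Definition \ref{2proc} together with Proposition \ref{eqconloscolimderep}, the 2-functor $\ff{L}$ sends a 2-pro-object $\ff{X} = (\ff{X}_i)_{i\in\cc{I}}$, with $\cc{I}$ a small 2-filtered 2-category, to the object $\coLim{i\in\cc{I}}{\cc{C}(\ff{X}_i,-)}$ of $\cc{H}om(\cc{C},\cc{C}at)$ — that is, to a small 2-filtered pseudocolimit of the representable 2-functors $\cc{C}(\ff{X}_i,-)$. This pseudocolimit does exist in $\cc{H}om(\cc{C},\cc{C}at)$: by Proposition \ref{pointwisebilimit} it is computed pointwise, and pointwise it is the 2-filtered pseudocolimit of categories furnished by Construction LL (Proposition \ref{construccionDS}).

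Next I would note that each $\cc{C}(\ff{X}_i,-)$ is a representable 2-functor and hence flexible — this is exactly the content of the pseudo Yoneda lemma read through the characterization \ref{flexiblechar} (see \ref{pseudoYoneda}, \ref{repflexible}, and the remark preceding Corollary \ref{pseudo=2forpro}). Therefore Corollary \ref{pseudo=2forpro}, which states that small pseudocolimits of representable 2-functors are flexible, applies verbatim to $\ff{LX} = \coLim{i\in\cc{I}}{\cc{C}(\ff{X}_i,-)}$, and we conclude that $\ff{LX}$ is flexible.

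There is essentially no obstacle in this argument: all the substance has already been placed in the cited results. The real work lives in Proposition \ref{colimflexible} (small pseudocolimits of flexible 2-functors are flexible), whose proof in turn rests on the characterization \ref{flexiblechar} of flexibility independent of the left adjoint $(-)'$ and on the pointwise computation of pseudolimits in functor 2-categories (Proposition \ref{pointwisebilimit}). The only item to verify at this stage is the bookkeeping that $\ff{LX}$ is genuinely the pseudocolimit in question, which is immediate from Definition \ref{2proc}.
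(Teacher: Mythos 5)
Your proposal is correct and is exactly the paper's argument: the paper derives this proposition directly from Corollary \ref{pseudo=2forpro}, since $\ff{LX}$ is by definition a small 2-filtered pseudocolimit of representables. The extra bookkeeping you supply (existence and pointwise computation of the pseudocolimit, flexibility of representables via the pseudo Yoneda lemma) is just an expansion of the same route.
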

\begin{remark}
If we use pseudonatural transformations to define morphisms of 2-pro-objects we obtain a 2-category $2$-$\cc{P}ro_p(\cc{C})$, which anyway, by \ref{proisflexible}, results pseudoequivalent  (see \ref{flexible2=p}) to
$2$-$\cc{P}ro(\cc{C})$, with the same objects and retract equivalent hom categories. We think our choice of morphisms, which is much more convenient to use, will prove to be the good one for the applications.
\end{remark}

Next we establish the basic formula which is essential in many \mbox{computations} in the $2$-category $2$-$\cc{P}ro(\cc{C})$:
\begin{proposition}\label{iso}
There is an isomorphism of categories:
\addtocounter{equation}{-1}
\begin{equation} \label{basica2}
  \Pro{C}(\ff{X},\ff{Y})\cong \Lim{j\in \cc{J}^{op}}{\coLim{i\in \cc{I}}
  {\cc{C}(\ff{X}_i,\ff{Y}_j)}}
\end{equation}
\end{proposition}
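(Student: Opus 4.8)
The plan is to obtain \eqref{basica2} by composing three canonical isomorphisms of categories, starting from the second description of the hom-category in Definition \ref{2proc},
$$
\Pro{C}(\ff{X},\ff{Y}) \;=\; \cc{H}om(\cc{C},\cc{C}at)\Big(\coLim{j\in \cc{J}}{\cc{C}(\ff{Y}_j,-)},\;\; \coLim{i\in \cc{I}}{\cc{C}(\ff{X}_i,-)}\Big).
$$
Write $\ff{Z} := \coLim{i\in \cc{I}}{\cc{C}(\ff{X}_i,-)}$, an object of $\cc{H}om(\cc{C},\cc{C}at)$: this pseudocolimit exists because $\cc{I}$ is $2$-filtered, and by Proposition \ref{pointwisebilimit} it is computed pointwise, from the $2$-filtered pseudocolimits of small categories of Construction LL, so that $\ff{Z}\ff{C} = \coLim{i\in \cc{I}}{\cc{C}(\ff{X}_i,\ff{C})}$ for every $\ff{C}\in\cc{C}$.

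First I would apply the universal property of the pseudocolimit $\coLim{j\in \cc{J}}{\cc{C}(\ff{Y}_j,-)}$ (Definition \ref{colimits}, with $\cc{A}=\cc{H}om(\cc{C},\cc{C}at)$ and $\ff{A}=\ff{Z}$): precomposition with the colimit injections gives an isomorphism
$$
\cc{H}om(\cc{C},\cc{C}at)\Big(\coLim{j\in \cc{J}}{\cc{C}(\ff{Y}_j,-)},\; \ff{Z}\Big) \;\cong\; \Lim{j\in \cc{J}^{op}}{\,\cc{H}om(\cc{C},\cc{C}at)\big(\cc{C}(\ff{Y}_j,-),\, \ff{Z}\big)\,},
$$
the right-hand side being the pseudolimit of the $2$-functor $\cc{J}^{op}\to\cc{CAT}$, $j\mapsto\cc{H}om(\cc{C},\cc{C}at)(\cc{C}(\ff{Y}_j,-),\ff{Z})$, whose value on an arrow $u$ of $\cc{J}$ is precomposition with $(\ff{Y}_u)^{*}$. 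Secondly, the $\cc{C}at$-enriched Yoneda lemma (Proposition \ref{2Yoneda} in its two-variable form: the isomorphism $\cc{H}om(\cc{C},\cc{C}at)(\cc{C}(\ff{C},-),\ff{F})\cong\ff{F}\ff{C}$ is $2$-natural in $(\ff{C},\ff{F})$, its $\ff{C}$-component being the $2$-fully-faithfulness of Corollary \ref{yonedaff}), after restricting the $\ff{C}$-variable along $\ff{Y}:\cc{J}^{op}\to\cc{C}$ and fixing $\ff{F}=\ff{Z}$, yields a $2$-natural isomorphism of $2$-functors $\cc{J}^{op}\to\cc{CAT}$ between $j\mapsto\cc{H}om(\cc{C},\cc{C}at)(\cc{C}(\ff{Y}_j,-),\ff{Z})$ and $j\mapsto\ff{Z}\ff{Y}_j$; since the pseudolimit is functorial and carries $2$-natural isomorphisms to isomorphisms of categories, we get $\Lim{j\in \cc{J}^{op}}{\cc{H}om(\cc{C},\cc{C}at)(\cc{C}(\ff{Y}_j,-),\ff{Z})} \cong \Lim{j\in \cc{J}^{op}}{\ff{Z}\ff{Y}_j}$.

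Thirdly, by the pointwise description of $\ff{Z}$ recalled above, the $2$-functor $\ff{Z}\circ\ff{Y}:\cc{J}^{op}\to\cc{C}at$ is (canonically identified with) the $2$-functor $j\mapsto\coLim{i\in \cc{I}}{\cc{C}(\ff{X}_i,\ff{Y}_j)}$, restriction along $\ff{Y}$ commuting with the pointwise pseudocolimits; hence $\Lim{j\in \cc{J}^{op}}{\ff{Z}\ff{Y}_j} = \Lim{j\in \cc{J}^{op}}{\coLim{i\in \cc{I}}{\cc{C}(\ff{X}_i,\ff{Y}_j)}}$. Chaining the three isomorphisms gives \eqref{basica2}.

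The main obstacle is the bookkeeping of naturality in the index $j$: one must check that each of the three isomorphisms is not merely an isomorphism of categories for fixed $j$, but is $2$-natural in $j$, so that it descends to an isomorphism between the pseudolimits over $\cc{J}^{op}$. For the first step this is built into the definition of a pseudocolimit; for the second it is the two-variable enriched Yoneda lemma together with the fact that $\ff{Y}$ is a genuine $2$-functor, so it transports $2$-cells and not only arrows; for the third it is the compatibility of restriction along $\ff{Y}$ with the pointwise pseudocolimits guaranteed by Proposition \ref{pointwisebilimit}. Beyond this, the argument is a routine unwinding of Definitions \ref{2proc} and \ref{colimits}.
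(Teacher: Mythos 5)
Your proposal is correct and follows essentially the same route as the paper's proof: the universal property of the pseudocolimit (Definition \ref{colimits}) for the first isomorphism, then the Yoneda lemma \ref{2Yoneda} combined with the pointwise computation of the pseudocolimit for the second. You merely make explicit the naturality-in-$j$ bookkeeping and split the Yoneda step into two, which the paper leaves implicit.
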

\begin{proof}
\begin{multline*}
\Pro{C}(\ff{X},\ff{Y}) \;=\; \cc{H}om(\cc{C},\, \cc{C}at)(\coLim{j \in \cc{J}}{\cc{C}(\ff{Y}_j,-)}, \;\coLim{i \in \cc{I}}{\cc{C}(\ff{X}_i,-))} \;\cong\;
\\
\Lim{j \in \cc{J}^{op}}{\cc{H}om(\cc{C},\, \cc{C}at)(\cc{C}(\ff{Y}_j,-), \;\coLim{i \in \cc{I}}{\cc{C}(\ff{X}_i,-))}} \;\cong\;
\Lim{j\in \cc{J}^{op}}{\coLim{i\in \cc{I}}{\cc{C}(\ff{X}_i,\ff{Y}_j)}}
\end{multline*}

The first isomorphism is due to \ref{colimits} and the second one to \ref{2Yoneda}.
\end{proof}

\begin{corollary}
The 2-category $\Pro{C}$ is locally small.
\end{corollary}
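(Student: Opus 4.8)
The plan is to deduce this directly from the basic formula of Proposition \ref{iso}, reducing the statement to a routine bookkeeping check on sizes through the two nested weak limits that appear there.

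First I would fix two $2$-pro-objects $\ff{X} = (\ff{X}_i)_{i\in\cc{I}}$ and $\ff{Y} = (\ff{Y}_j)_{j\in\cc{J}}$ and invoke the isomorphism of categories
\[
\Pro{C}(\ff{X},\ff{Y}) \;\cong\; \Lim{j\in \cc{J}^{op}}{\coLim{i\in \cc{I}}{\cc{C}(\ff{X}_i,\ff{Y}_j)}}
\]
from \ref{iso}. Since isomorphic categories have the same size, it suffices to prove that the right-hand side is a small category. For the inner pseudocolimit: by the standing assumption of this section $\cc{C}$ is locally small, so each hom category $\cc{C}(\ff{X}_i,\ff{Y}_j)$ is small; by Definition \ref{2proc} the index $2$-category $\cc{I}$ is small; and $\cc{C}at$ has all small pseudocolimits of small categories (see \cite{BKP}, \cite{K2}). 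Hence for each fixed $j$ the category $\coLim{i\in \cc{I}}{\cc{C}(\ff{X}_i,\ff{Y}_j)}$ is again small. If one prefers a hands-on argument, this is also manifest from Construction LL: its objects are a quotient of a subset of $\coprod_i \mathrm{Ob}\,\cc{C}(\ff{X}_i,\ff{Y}_j)$, and likewise for its morphisms.

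Then I would apply the same reasoning one level up. We now have a $2$-functor $\cc{J}^{op} \mr{} \cc{C}at$ landing in small categories, with $\cc{J}$ small by Definition \ref{2proc}; since $\cc{C}at$ also has all small pseudolimits of small categories, the pseudolimit $\Lim{j\in \cc{J}^{op}}{\coLim{i\in \cc{I}}{\cc{C}(\ff{X}_i,\ff{Y}_j)}}$ is small. (Again the explicit construction referred to in \ref{limincat} makes this transparent: objects and morphisms are certain families indexed by the small $2$-category $\cc{J}$, subject to compatibility conditions, so there is only a set of them.) Therefore $\Pro{C}(\ff{X},\ff{Y})$ is small, which is exactly local smallness of $\Pro{C}$.

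There is no genuine obstacle here; the only point needing a little care is checking that the two index $2$-categories entering the formula are the small ones — which is built into the definition of a $2$-pro-object — and that each successive layer of the construction stays inside small categories, which is guaranteed by the quoted existence of small pseudolimits and pseudocolimits in $\cc{C}at$.
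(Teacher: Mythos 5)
Your proof is correct and is exactly the argument the paper intends: the corollary is read off from the formula of Proposition \ref{iso} together with the standing local smallness of $\cc{C}$, the smallness of the index 2-categories $\cc{I}$ and $\cc{J}$, and the fact (recalled in the paper with the same references) that $\cc{C}at$ is closed under small pseudolimits and pseudocolimits. Nothing further is needed.
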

\begin{corollary} \label{CisPro}
There is a canonical 2-fully-faithful 2-functor
\mbox{$\cc{C}\mr{c} 2$-$\cc{P}ro(\cc{C})$} which sends an object of $\cc{C}$ into the corresponding \mbox{2-pro-object} with index \mbox{2-category} $\{*\}$. Since this 2-functor is also injective on objects, we can identify $\cc{C}$ with a 2-full subcategory of
$2\hbox{-}\cc{P}ro(\cc{C})$.
\cqd
\end{corollary}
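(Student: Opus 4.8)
The plan is to realize $c$ as a factorization of the Yoneda 2-functor $\ff{y}^{(-)}$ of \ref{Y2functor} through the 2-fully-faithful 2-functor $\ff{L}\colon\Pro{C}\to\cc{H}om(\cc{C},\cc{C}at)^{op}$ of Proposition \ref{eqconloscolimderep}, and then to deduce 2-fully-faithfulness of $c$ from that of $\ff{y}^{(-)}$ together with that of $\ff{L}$.

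First I would define $c$ on objects: for $\ff{C}\in\cc{C}$, let $c\ff{C}$ be the 2-pro-object whose index 2-category is the terminal one $\{*\}$ and whose single value is $\ff{C}$ --- that is, $c\ff{C}$ is literally the 2-functor $\{*\}^{op}\to\cc{C}$ selecting $\ff{C}$. The point is that the pseudolimit $\ff{L}(c\ff{C})=\Lim{i\in\{*\}^{op}}{\cc{C}(\ff{C},-)}$ may be taken to be the representable $\cc{C}(\ff{C},-)=\ff{y}^\ff{C}$ itself: in the explicit construction recalled in \ref{limincat}, a pseudocone over a one-object diagram reduces --- by the axiom PC0, which forces its structure 2-cell to be an identity --- to a single morphism into the vertex, so the universal pseudocone is an identity and $\ff{L}(c\ff{C})=\ff{y}^\ff{C}$. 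Since each representable is thereby in the image of $\ff{L}$, and $\ff{L}$ is 2-fully-faithful, each hom functor $\ff{L}_{c\ff{C},c\ff{D}}$ is an isomorphism of categories; hence there is a unique way to define $c$ on an arrow $\ff{f}$ and on a 2-cell $\alpha$ of $\cc{C}$ so that $\ff{L}(c\ff{f})=\ff{y}^\ff{f}=\ff{f}^*$ and $\ff{L}(c\alpha)=\ff{y}^\alpha=\alpha^*$. Faithfulness of $\ff{L}$ lets it reflect the 2-functor axioms from $\ff{y}^{(-)}$, so $c$ is a 2-functor, and by construction $\ff{L}\circ c=\ff{y}^{(-)}$.

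For 2-fully-faithfulness, fix $\ff{C},\ff{D}\in\cc{C}$ and factor the Yoneda hom functor as $(\ff{y}^{(-)})_{\ff{C},\ff{D}}=\ff{L}_{c\ff{C},c\ff{D}}\circ c_{\ff{C},\ff{D}}$; the composite is an isomorphism of categories by Corollary \ref{yonedaff} and $\ff{L}_{c\ff{C},c\ff{D}}$ is an isomorphism by \ref{eqconloscolimderep}, hence so is $c_{\ff{C},\ff{D}}$. (One can instead argue straight from the basic formula \ref{basica2}: $\Pro{C}(c\ff{C},c\ff{D})\cong\Lim{j\in\{*\}^{op}}{\coLim{i\in\{*\}}{\cc{C}(\ff{C},\ff{D})}}$, and since both the pseudolimit and the pseudocolimit are indexed by the terminal 2-category they collapse to $\cc{C}(\ff{C},\ff{D})$; one then checks that this composite isomorphism is precisely $c_{\ff{C},\ff{D}}$.) Injectivity on objects is immediate: $c\ff{C}=c\ff{D}$ means these are the same 2-functor $\{*\}^{op}\to\cc{C}$, whence $\ff{C}=(c\ff{C})_*=(c\ff{D})_*=\ff{D}$. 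Being 2-fully-faithful and injective on objects, $c$ restricts to an isomorphism of 2-categories from $\cc{C}$ onto the full sub-2-category of $\Pro{C}$ on the objects $c\ff{C}$, $\ff{C}\in\cc{C}$, which is the asserted identification.

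The single point needing care --- routine, but worth pinning down, as is usual in this subject --- is the reduction of pseudolimits and pseudocolimits over the terminal 2-category $\{*\}$ to the underlying object and category, together with the verification that the comparison isomorphism on hom categories supplied by \ref{iso} is literally the functor $c_{\ff{C},\ff{D}}$ coming from the definition of $c$ above; equivalently, that defining $c$ by $\ff{L}\circ c=\ff{y}^{(-)}$ agrees with the naive description of $c$ on arrows and 2-cells read off directly from Definition \ref{2proc}.
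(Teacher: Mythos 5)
Your proposal is correct. The paper gives no written proof: the corollary is stated as immediate from Definition \ref{2proc} together with the basic formula \ref{basica2}, i.e.\ precisely the route you mention parenthetically, $\Pro{C}(c\ff{C},c\ff{D})\cong\Lim{j\in\{*\}^{op}}{\coLim{i\in\{*\}}{\cc{C}(\ff{C},\ff{D})}}\cong\cc{C}(\ff{C},\ff{D})$, with both weighted (co)limits over the terminal 2-category collapsing by PC0. Your primary route — defining $c$ by the factorization $\ff{L}\circ c=\ff{y}^{(-)}$ and deducing that $c_{\ff{C},\ff{D}}$ is an isomorphism from 2-fully-faithfulness of $\ff{y}^{(-)}$ (Corollary \ref{yonedaff}) and of $\ff{L}$ (Proposition \ref{eqconloscolimderep}) — is an equivalent but slightly more structured packaging of the same content: since the hom categories of $\Pro{C}$ are by definition hom categories in $\cc{H}om(\cc{C},\cc{C}at)^{op}$, the 2-fully-faithfulness of $\ff{L}$ is tautological, and the Yoneda isomorphism is exactly what the collapse of the $\{*\}$-indexed pseudocolimit encodes. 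What your version buys is a clean, choice-free description of $c$ on arrows and 2-cells ($\ff{L}(c\ff{f})=\ff{f}^*$, $\ff{L}(c\alpha)=\alpha^*$) and reflection of the 2-functor axioms along $\ff{L}$; the only point to keep explicit, which you do flag, is that one either takes the pseudocolimit over $\{*\}$ to be the representable itself or carries the canonical isomorphism with the construction of \ref{defLF}, and that the resulting identification of $c_{\ff{C},\ff{D}}$ with the isomorphism of \ref{iso} is the evident one. The remaining assertions (injectivity on objects, identification with a 2-full subcategory) are handled exactly as intended.
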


Where there is no risk of confusion, we will omit to indicate notationally this identification. By the very definition of $2\hbox{-}\cc{P}ro(\cc{C})$ it follows:

\begin{proposition}\label{X=lim}
If $\ff{X}=(\ff{X}_i)_{i\in \cc{I}}$ is any 2-pro-object of $\cc{C}$, then
\mbox{$\ff{X}=\Lim{i\in \cc{I}^{op}}{\ff{X}_i}\;$} in \mbox{$2$-$\cc{P}ro(\cc{C})$.} $\ff{X}$ is equipped with projections, for each
$i \in \cc{I}$, $\ff{X} \mr{\pi_i} \ff{X}_i$, and a pseudocone structure,
for each $i \mr{u} j \, \in \cc{I}$, invertible $2$-cells
$\pi_i \Mr{\pi_u} \ff{X}_u \, \pi_j$.

 \vspace{1ex}

Under the isomorphism
$\Pro{C}(\ff{X},\, \ff{X}_i) \cong \coLim{k \in
\cc{I}}{\cc{C}(\ff{X}_k,\, \ff{X}_i})$ \eqref{basica2},
the projections $\ff{X} \mr{\pi_i} \ff{X}_i$
correspond to the object $(id_{\ff{X}_i},\, i)$ in construction \ref{defLF}.
\begin{flushright}
\cqd 
\end{flushright}

\end{proposition}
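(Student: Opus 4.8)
The plan is to transport the universal property of the pseudocolimit $\ff{L}\ff{X} = \coLim{i\in\cc{I}}{\cc{C}(\ff{X}_i,-)}$ back along the $2$-fully-faithful $2$-functor $\ff{L}\colon \Pro{C} \mr{} \cc{H}om(\cc{C},\cc{C}at)^{op}$ of Proposition \ref{eqconloscolimderep}. First I recall that, by Corollary \ref{CisPro}, each $\ff{X}_i$ viewed as a $2$-pro-object is $c(\ff{X}_i)$ and $\ff{L}(c(\ff{X}_i)) = \cc{C}(\ff{X}_i,-)$; hence the diagram $\cc{I}^{op}\mr{} \Pro{C}$, $i\mapsto\ff{X}_i$, is carried by $\ff{L}$ to the diagram $i\mapsto\cc{C}(\ff{X}_i,-)$, whose pseudocolimit in $\cc{H}om(\cc{C},\cc{C}at)$ is, \emph{by the very Definition \ref{2proc}}, the $2$-functor $\ff{L}\ff{X}$. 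Since $\ff{L}$ is $2$-fully-faithful it induces an isomorphism between the category of pseudocones on $\ff{X}$ over $i\mapsto\ff{X}_i$ in $\Pro{C}$ and the category of pseudocones on $\ff{L}\ff{X}$ over $i\mapsto\cc{C}(\ff{X}_i,-)$ in $\cc{H}om(\cc{C},\cc{C}at)^{op}$ — a pseudocone is a pseudonatural transformation into a constant $2$-functor, so all its data and axioms take place in the hom categories, on which $\ff{L}$ is an isomorphism. I then \emph{define} $\ff{X}\mr{\pi_i}\ff{X}_i$ together with the invertible $2$-cells $\pi_i\Mr{\pi_u}\ff{X}_u\,\pi_j$ to be the pseudocone corresponding under this isomorphism to the universal pseudocolimit pseudocone $\cc{C}(\ff{X}_i,-)\mr{\lambda_i}\ff{L}\ff{X}$.

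To check the universal property, fix a $2$-pro-object $\ff{A}$. Applying in turn the $2$-fully-faithfulness of $\ff{L}$, the universal property of the pseudocolimit $\ff{L}\ff{X}$ together with the identification $\ff{PC}\cong\Lim{}{}$ of Definition \ref{colimits}, the $2$-fully-faithfulness of $\ff{L}$ pointwise with the identity $\ff{L}(c(\ff{X}_i))=\cc{C}(\ff{X}_i,-)$, and finally the dual of Definition \ref{colimits}, one obtains a chain of isomorphisms of categories
\begin{multline*}
\Pro{C}(\ff{A},\ff{X}) \cong \cc{H}om(\cc{C},\cc{C}at)(\ff{L}\ff{X},\ff{L}\ff{A}) \cong \Lim{i\in\cc{I}^{op}}{\cc{H}om(\cc{C},\cc{C}at)(\cc{C}(\ff{X}_i,-),\ff{L}\ff{A})} \cong \\ \Lim{i\in\cc{I}^{op}}{\Pro{C}(\ff{A},\ff{X}_i)} \cong \ff{PC}_{\Pro{C}}(\ff{X}_\bullet,\ff{A}).
\end{multline*}
By naturality in $\ff{A}$ and the way the $\pi_i$, $\pi_u$ were chosen, this composite is precomposition with the pseudocone $(\pi_i,\pi_u)$, so that pseudocone is universal; this is exactly $\ff{X}=\Lim{i\in\cc{I}^{op}}{\ff{X}_i}$ in $\Pro{C}$. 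The step requiring care — and the main obstacle of the proof — is precisely the verification that this composite isomorphism is the canonical comparison functor (``commuting with the pseudocones'' in Definition \ref{colimits}); this comes down to unwinding the compositions of $\Pro{C}$ and the definition of $\ff{L}$, together with naturality of the Yoneda isomorphism, and is essentially bookkeeping.

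For the final assertion, take $\ff{A}=\ff{X}$ and trace $id_\ff{X}$ through the identifications: the projection $\pi_i\in\Pro{C}(\ff{X},\ff{X}_i)$ corresponds under $\ff{L}$ to the pseudocolimit injection $\cc{C}(\ff{X}_i,-)\mr{\lambda_i}\coLim{k\in\cc{I}}{\cc{C}(\ff{X}_k,-)}$ in $\cc{H}om(\cc{C},\cc{C}at)$. Now the isomorphism
$$\Pro{C}(\ff{X},\ff{X}_i)\cong\cc{H}om(\cc{C},\cc{C}at)\bigl(\cc{C}(\ff{X}_i,-),\,\coLim{k\in\cc{I}}{\cc{C}(\ff{X}_k,-)}\bigr)\cong\coLim{k\in\cc{I}}{\cc{C}(\ff{X}_k,\ff{X}_i)}$$
is, by the Yoneda Lemma \ref{2Yoneda} and the pointwise computation of pseudocolimits (Proposition \ref{pointwisebilimit}), the assignment sending a pseudonatural transformation to its component at $\ff{X}_i$ evaluated at $id_{\ff{X}_i}$. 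Hence $\pi_i$ is sent to $(\lambda_i)_{\ff{X}_i}(id_{\ff{X}_i})$, which by the explicit formula of Construction LL (Proposition \ref{construccionDS}), applied to the $2$-functor $k\mapsto\cc{C}(\ff{X}_k,\ff{X}_i)$, is the object $(id_{\ff{X}_i},\,i)$ of Construction \ref{defLF}. This completes the argument.
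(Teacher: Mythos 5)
Your argument is correct and is exactly the one the paper leaves implicit: the paper asserts this proposition follows "by the very definition" of $\Pro{C}$ (morphism categories being homs of pseudocolimits of representables), and your writeup simply transports the pseudocolimit universal property of $\ff{L}\ff{X}$ along the $2$-fully-faithful $\ff{L}$ and traces $\pi_i$ through the Yoneda and Construction \ref{defLF} identifications to get $(id_{\ff{X}_i},i)$. No gaps beyond the routine compatibility check you already flag as bookkeeping.
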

Note that from this proposition it follows:
\begin{remark} \label{XisLim}
Given any two pro-objects $\ff{X}, \, \ff{Z} \, \in $ 2-$\cc{P}ro(\cc{C})$, there is an isomorphism of categories
2-$\cc{P}ro(\cc{C})(\ff{Z},\, \ff{X}) \mr{\cong}
\ff{PC}_{2\hbox{-}\cc{P}ro(\cc{C})}(\ff{Z},\, c\ff{X})$, where
$\ff{PC}_{2\hbox{-}\cc{P}ro(\cc{C})}$ is the category of pseudocones for the 2-functor $c\ff{X}$ with \mbox{vertex
$\ff{Z}$.}
\end{remark}
\vspace{1ex}

It is important to note that when $\Lim{i\in \cc{I}^{op}}{\ff{X}_i}$ exists in $\cc{C}$, this pseudolimit would not be isomorphic to $\ff{X}$ in $2$-$\cc{P}ro(\cc{C})$. In general, the functor $c$ does not preserve 2-cofiltered pseudolimits, in fact, it will preserve them only when $\cc{C}$ is already a category of $2$-pro-objects, in which case $c$ is an equivalence.
\subsection{Lemmas to compute with 2-pro-objects.} $ $

\begin{definition} \label{representa} ${}$

\begin{enumerate}
 \item Let $\ff{X} \mr{\ff{f}} \ff{Y}$ be an arrow in $2$-$\cc{P}ro(\cc{C})$. We say that a pair $(\ff{r},\varphi)$ \emph{represents} $\ff{f}$, if $\varphi$ is an invertible $2$-cell $\pi_j \, \ff{f} \Mr{\varphi} \ff{r} \, \pi_i$. That is, if
we have the following diagram in $2$-$\cc{P}ro(\cc{C})$:
$$
\xymatrix@C=0.5pc@R=0.5pc
           {
            \ff{X} \ar[rr]^{\ff{f}}  \ar[dd]_{\pi_i}
            & & \ff{Y} \ar[dd]^{\pi_j}
           \\
            & \Downarrow \cong \varphi
            &
           \\ \ff{X}_i \ar[rr]_{\ff{r}}
            & & \ff{Y}_j
           }
$$

\vspace{-2ex}

\item Let $\ff{X} \cellrd{\ff{f}}{\alpha}{\ff{g}} \ff{Y}$ and
$\;\ff{X}_i \cellrd{\ff{r}}{\theta}{\ff{s}} \ff{Y}_j$ be 2-cells in
$2$-$\cc{P}ro(\cc{C})$ and $\cc{C}$ as in the following diagram:
$$
\xymatrix@C=9ex@R=7ex
        {
         \ff{X}   \ar@<1.6ex>[r]^{\ff{f}}
             \ar@{}@<-1.3ex>[r]^{\!\! {\alpha} \, \!\Downarrow}
             \ar@<-1.1ex>[r]_{\ff{g}}
             \ar[d]^{\pi_i}
         & \ff{Y} \ar[d]^{\pi_j}
        \\
         \ff{X}_i
             \ar@<1.6ex>[r]^{\ff{r}}
             \ar@{}@<-1.3ex>[r]^{\!\! {\theta} \, \!\Downarrow}
             \ar@<-1.1ex>[r]_{\ff{s}}
         & \ff{Y}_j
        }
$$
We say that $(\theta, \ff{r}, \varphi, \ff{s}, \psi)$ \emph{represents} $\alpha$ if $(\ff{r},\varphi)$ represents $\ff{f}$, $(\ff{s},\psi)$ represents $\ff{g}$,
and the \mbox{following} diagram commutes in $2$-$\cc{P}ro(\cc{C})$:
$$\vcenter{\xymatrix
        {
          \pi_j \ff{f} \ar@{=>}[r]^{\varphi}_\cong
                  \ar@{=>}[d]_{\pi_j\alpha}
        & \ff{r} \pi_i \ar@{=>}[d]^{\theta\pi_i}
        \\
          \pi_j \ff{g}
                  \ar@{=>}[r]_{\cong}^{\psi}
        & \ff{s} \pi_i} }
        \vcenter{\xymatrix@C=-.4pc{\quad \quad i.e. \quad \quad  }}
\vcenter{\xymatrix@C=0ex
          {
           \pi_j \dl & \dc{\varphi} & \dr \ff{f}
          \\
           \ff{r} \dcell{\theta} && \pi_i \deq
          \\
           \ff{s} && \pi_i
          }}
\vcenter{\xymatrix{\comw{\pi_j \ff{f}} \\ = \\ \comw{\pi_j \ff{f}} }}
\vcenter{\xymatrix@C=0ex
          {
           \pi_j \deq && \ff{f} \dcell{\alpha}
          \\
           \pi_j \dl & \dc{\psi} & \ff{g} \dr
          \\
           \ff{s} && \pi_i
          }}$$
          
That is, $\theta \pi_i = \pi_j \alpha$  "modulo" a pair of invertible 2-cells $\varphi,\, \psi$.

\vspace{1ex}

Clearly, if $\alpha$ is invertible, then so is $\theta$.

\end{enumerate}

\end{definition}

\begin{proposition} \label{idrepresenta} ${}$
Let $\ff{X} = (\ff{X}_i)_{i\in \cc{I}}$ and
$\ff{Y} = (\ff{Y}_j)_{j\in \cc{J}}$ be any two objects in
$2$-$\cc{P}ro(\cc{C})$:
\begin{enumerate}

\item Let $\ff{X} \mr{\ff{f}} \ff{Y}$, then, for any $j \in \cc{J}$ there is an
$i \in \cc{I}$ and $\ff{X}_i \mr{\ff{r}} \ff{Y}_j$ in $\cc{C}$, such that $(\ff{r},id)$ represents $\ff{f}$.

\item Let $\ff{X} \cellrd{\ff{f}}{\alpha}{\ff{g}} \ff{Y}$, then, for any $j \in \cc{J}$ there is an
$i \in \cc{I}$, $\ff{X}_i \cellrd{\ff{r}}{\theta}{\ff{s}} \ff{Y}_j$ in
$\cc{C}$, and \mbox{appropriate} invertible 2-cells $\varphi$ and $\psi$ such that $(\theta,\ff{r},\varphi,\ff{s},\psi)$ represents $\alpha$.

\end{enumerate}

\end{proposition}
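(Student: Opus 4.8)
The plan is to deduce both parts from the explicit description of $2$-filtered pseudocolimits in \textbf{Construction LL} (\ref{defLF}, \ref{construccionDS}) together with the basic formula \eqref{basica2}. Fix $j\in\cc{J}$; post-composition with the projection $\pi_j:\ff{Y}\to\ff{Y}_j$ of \ref{X=lim} sends the arrow $\ff{f}$ (resp.\ the $2$-cell $\alpha$) to an arrow $\pi_j\ff{f}$ (resp.\ a $2$-cell $\pi_j\alpha$) of $\Pro{C}(\ff{X},\ff{Y}_j)$. Applying \eqref{basica2} with $\ff{Y}$ replaced by the one-object diagram $\ff{Y}_j$ gives an isomorphism of categories $\Phi:\Pro{C}(\ff{X},\ff{Y}_j)\xrightarrow{\cong}\coLim{i\in\cc{I}}{\cc{C}(\ff{X}_i,\ff{Y}_j)}$, and, since $\cc{I}$ is $2$-filtered, the right-hand side is the category $\cc{L}(\ff{G}_j)$ produced by Construction LL from the $2$-functor $\ff{G}_j:=\cc{C}(\ff{X}_{(-)},\ff{Y}_j):\cc{I}\to\cc{C}at$. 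Because the isomorphism of \eqref{basica2} is built from \ref{colimits} and the Yoneda lemma \ref{2Yoneda}, which are natural and compatible with the colimit cocones, $\Phi$ intertwines the colimit leg $\lambda_i$ of $\cc{L}(\ff{G}_j)$ (see \ref{construccionDS}) with pre-composition by $\pi_i:\ff{X}\to\ff{X}_i$, and the pseudonatural $2$-cell $(\lambda_u)$ with whiskering by the pseudocone $2$-cell $\pi_u$ of \ref{X=lim}; recording this compatibility carefully is the one preliminary step.

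\textbf{Part (1).} Every object of $\cc{L}(\ff{G}_j)$ is literally a pair $(\ff{r},i)$ with $\ff{r}\in\ff{G}_j(i)=\cc{C}(\ff{X}_i,\ff{Y}_j)$, and by \ref{construccionDS} one has $\lambda_i(\ff{r})=(\ff{r},i)$, hence $\Phi(\ff{r}\,\pi_i)=\lambda_i(\ff{r})=(\ff{r},i)$. Given $\ff{f}$, put $(\ff{r},i):=\Phi(\pi_j\ff{f})$; then $\Phi(\pi_j\ff{f})=(\ff{r},i)=\Phi(\ff{r}\,\pi_i)$, and since $\Phi$ is an isomorphism we get $\pi_j\ff{f}=\ff{r}\,\pi_i$ in $\Pro{C}$, i.e.\ $(\ff{r},id)$ represents $\ff{f}$.

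\textbf{Part (2).} Apply (1) to $\ff{f}$ and to $\ff{g}$, getting $i_1,\ff{r}_1$ with $\pi_j\ff{f}=\ff{r}_1\pi_{i_1}$ and $i_2,\ff{r}_2$ with $\pi_j\ff{g}=\ff{r}_2\pi_{i_2}$, so $\Phi(\pi_j\ff{f})=(\ff{r}_1,i_1)$ and $\Phi(\pi_j\ff{g})=(\ff{r}_2,i_2)$. Then $\Phi(\pi_j\alpha)$ is a morphism $(\ff{r}_1,i_1)\to(\ff{r}_2,i_2)$ of $\cc{L}(\ff{G}_j)$, hence (second step of Construction LL) the class $[u,\gamma,v]$ of a premorphism with $i_1\mr{u}i$, $i_2\mr{v}i$ in $\cc{I}$ and $\gamma:\ff{G}_j(u)(\ff{r}_1)\Rightarrow\ff{G}_j(v)(\ff{r}_2)$ in $\ff{G}_j(i)$, that is, $\gamma:\ff{r}_1\ff{X}_u\Rightarrow\ff{r}_2\ff{X}_v$ in $\cc{C}(\ff{X}_i,\ff{Y}_j)$. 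Now set $\ff{r}:=\ff{r}_1\ff{X}_u$, $\ff{s}:=\ff{r}_2\ff{X}_v$, $\theta:=\gamma$, and take $\varphi:=\ff{r}_1\pi_u$ (an invertible $2$-cell $\pi_j\ff{f}=\ff{r}_1\pi_{i_1}\Rightarrow\ff{r}_1\ff{X}_u\pi_i=\ff{r}\,\pi_i$) and $\psi:=\ff{r}_2\pi_v$; then $(\ff{r},\varphi)$ represents $\ff{f}$ and $(\ff{s},\psi)$ represents $\ff{g}$. It remains to verify $(\theta\pi_i)\circ\varphi=\psi\circ(\pi_j\alpha)$, which I would do by applying $\Phi$: the left side becomes $\lambda_i(\gamma)\circ(\lambda_u)_{\ff{r}_1}$ and the right side $(\lambda_v)_{\ff{r}_2}\circ[u,\gamma,v]$, and both compute, via the composition recipe for premorphisms of Construction LL (taking the identity wherever an invertible $2$-cell is needed to merge cocones), to the same class $[u,\gamma,id_i]:(\ff{r}_1,i_1)\to(\ff{s},i)$. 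When $\alpha$ is invertible, so is $\theta=\gamma$, since $\Phi$ and the $\pi$'s are, matching the remark in \ref{representa}.

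\textbf{Main difficulty.} There is no deep obstacle; the work is entirely bookkeeping, and its delicate point is making the preliminary compatibility statement precise enough, namely that the composite isomorphism $\Phi$ of \eqref{basica2} (a) identifies, on objects, arrows $\ff{X}\to\ff{Y}_j$ of $\Pro{C}$ with pairs $(\ff{r},i)$ via pre-composition with $\pi_i$, and (b) carries $\varphi=\ff{r}_1\pi_u$ to the structural morphism $(\lambda_u)_{\ff{r}_1}$ and $\theta\pi_i=\gamma\pi_i$ to $\lambda_i(\gamma)$. Granting this, the only remaining computation is the purely combinatorial composition of premorphisms inside $\cc{L}(\ff{G}_j)$, which is routine.
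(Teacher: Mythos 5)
Your proposal is correct and follows essentially the same route as the paper, whose proof is precisely the one-line instruction to post-compose with $\pi_j$, apply formula \eqref{basica2} to $\ff{X}\rightrightarrows\ff{Y}_j$, and read off representatives from the explicit construction of $2$-filtered pseudocolimits (\ref{defLF}, \ref{construccionDS}); you have simply spelled out the bookkeeping. The compatibility of $\Phi$ with $\pi_i$, $\pi_u$ and the colimit legs that you flag as the delicate point is exactly the content the paper also relies on (and reuses in Lemmas \ref{lema1} and \ref{lema2}), so no gap remains.
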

\begin{proof}
Consider $\ff{X}\cellrd{\pi_j \ff{f}}{\pi_j\alpha}{\pi_j \ff{g}}\ff{Y}_j$ and use formula \ref{iso} plus the constructions of pseudolimits and  2-filtered pseudocolimits, \ref{limincat}, \ref{defLF}.
\end{proof}
\begin{lemma}\label{lema1}
Let
$\ff{X}=(\ff{X}_i)_{i\in \cc{I}}\in 2\hbox{-}\cc{P}ro(\mathcal{C})$, let $\ff{X}_i\mr{\ff{r}} \ff{C}$,
\mbox{$\ff{X}_{j}\mr{\ff{s}} \ff{C}\in \cc{C}$}, and let
$\ff{X} \cellrd{\ff{r} \pi_i}{\alpha}{\ff{s} \pi_{j}} \ff{C}\in 2$-$\cc{P}ro(\mathcal{C})$. Then, $\exists \vcenter{\xymatrix@R=-0.6pc@C=0.8pc{i \ar[rd]^{u} &\\& k \\ j\ar[ru]_{v}}}$ and $\ff{X}_{k} \cellrd{\ff{r} \ff{X}_u}{\theta}{\ff{s} \ff{X}_v} \ff{C}$ such that the following diagram commutes in $\Pro{C}$:

$$\vcenter
    {
     \xymatrix@R=3ex
     {
     \ff{X}\ar[rr]^{\pi_i} \ar[d]_{\pi_{k}}
     &&
     \ff{X}_i \ar[dd]^{\ff{r}}
     \\
     \ff{X}_{k} \ar[rru]_{\ff{X}_u} \ar@{}[ru]|>>>>>>{\displaystyle \pi_u \!\Downarrow} \ar[d]_{\ff{X}_v}
     &
     \ar@{}[d]|<<<{\displaystyle \Downarrow \theta}
     \\
     \ff{X}_{j} \ar[rr]_{\ff{s}}
     &&
     \ff{C}
      }
     }
 \vcenter{\xymatrix@C=-.4pc{\quad = \quad}}
 \vcenter
   {
    \xymatrix@R=3ex
     {
     & \ff{X} \ar[dl]_{\pi_{k}}
              \ar[rr]^{\pi_i}
              \ar[dd]^{\pi_{j}}
     &&
     \ff{X}_i \ar[dd]^{\ff{r}}
     \\
     \ff{X}_{k} \ar[dr]_{\ff{X}_v} \ar@<-1ex>@{}[r]|>>>>>>{\displaystyle \Leftarrow}  \ar@<1ex>@{}[r]|>>>>>>{\displaystyle \pi_v} &
     &
     \Downarrow  \alpha
     &
     \\ &
     \ff{X}_{j} \ar[rr]_{\ff{s}}
     &&
     \ff{C}
     }
   }
 $$
$$
\vcenter
  {
   \xymatrix
      {
        \ff{r}\,\pi_i  \ar@{=>}[r]^{\ff{r} \pi_u}_\cong
                       \ar@{=>}[d]_{\alpha}
      & \ff{r}\,\ff{X}_u \pi_{k}
                       \ar@{=>}[d]^{\theta\pi_{k}}
      \\
        \ff{s} \pi_{j} \ar@{=>}[r]^{\ff{s}\pi_v}_\cong
      & \ff{s} \ff{X}_v  \pi_{k}
      }
  }
\vcenter{\xymatrix@C=-.4pc{\quad \quad i.e. \quad \quad  }}
\vcenter{\xymatrix@C=-2ex
          {
           \ff{r} \deq & & & \pi_i \dcellop{\pi_u} & 
          \\
           \ff{r} \dl & \comw{\ff{X}_u} \dc{\theta} & \ff{X}_u \dr & & \pi_k \deq
          \\
           \ \ff{s} \  && \ff{X}_v & & \pi_k
          }}
\vcenter{\xymatrix{\comw{\pi_j \ff{f}} \\ = \\ \comw{\pi_j \ff{f}} }}
\vcenter{\xymatrix@C=-2ex
          {
           \ff{r} \dl && \ar@{}[dl]|{\alpha} & \pi_i \dr 
          \\
           \ff{s} \deq &  &  & \pi_j \dcellop{\pi_v}
          \\
           \ff{s} & \comw{\ff{X}_v} & \ff{X}_v & & \pi_k
          }}$$
Clearly, if $\alpha$ is invertible, then so is $\theta$.
\end{lemma}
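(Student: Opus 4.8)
The plan is to deduce the statement from the basic formula \eqref{basica2}, by viewing the codomain $\ff{C}$ as the 2-pro-object $c\ff{C}$ with index 2-category $\{*\}$, and then unwinding the explicit description of 2-filtered pseudocolimits of categories (Construction LL, \ref{defLF}, \ref{construccionDS}). Specializing \eqref{basica2} to $\ff{Y}=c\ff{C}$, the outer pseudolimit is trivial and one obtains
\[
\Pro{C}(\ff{X},\ff{C})\;\cong\;\coLim{i\in\cc{I}}{\cc{C}(\ff{X}_i,\ff{C})}\;=\;\cc{L}(\ff{F}),
\]
where $\ff{F}\colon\cc{I}\mr{}\cc{C}at$ is the 2-functor $\ff{F}(i)=\cc{C}(\ff{X}_i,\ff{C})$, $\ff{F}(u)=(\ff{X}_u)^*$ (covariant in $\cc{I}$, since $\ff{X}$ is contravariant on $\cc{I}$), and the last identification is \ref{construccionDS}, $\cc{I}$ being 2-filtered (\ref{2filtered}).

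Next I would set up the dictionary for this isomorphism. Using \ref{X=lim} together with the explicit universal pseudocone $\lambda$ of \ref{construccionDS} (so that $\ff{g}\pi_i$ corresponds to the object $(\ff{g},i)$), one checks that $\ff{r}\pi_i$ corresponds to $(\ff{r},i)\in\cc{L}(\ff{F})$, that $\ff{s}\pi_j$ corresponds to $(\ff{s},j)$, and that, for $i\mr{u}k$ in $\cc{I}$, the invertible 2-cell $\ff{r}\pi_u\colon\ff{r}\pi_i\Mr{}\ff{r}\ff{X}_u\pi_k$ corresponds to the canonical isomorphism $(\lambda_u)_\ff{r}\colon(\ff{r},i)\mr{}(\ff{r}\ff{X}_u,k)$ of $\cc{L}(\ff{F})$ (and symmetrically for $v$ and $\ff{s}$).

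Granting this, $\alpha\colon\ff{r}\pi_i\Mr{}\ff{s}\pi_j$ is a morphism $(\ff{r},i)\mr{}(\ff{s},j)$ of $\cc{L}(\ff{F})$, hence by the second step of Construction LL it is the class of a premorphism: arrows $i\mr{u}k$, $j\mr{v}k$ of $\cc{I}$ and a morphism $\theta$ of $\ff{F}k=\cc{C}(\ff{X}_k,\ff{C})$ from $\ff{F}(u)(\ff{r})=\ff{r}\ff{X}_u$ to $\ff{F}(v)(\ff{s})=\ff{s}\ff{X}_v$ — that is, precisely a 2-cell $\theta\colon\ff{r}\ff{X}_u\Mr{}\ff{s}\ff{X}_v$ in $\cc{C}$. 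Writing out what it means for $(u,\theta,v)$ to represent $\alpha$ (namely that in $\cc{L}(\ff{F})$ one has $\alpha=(\lambda_v)_\ff{s}^{-1}\circ\lambda_k(\theta)\circ(\lambda_u)_\ff{r}$) and transporting this back along the isomorphism above by means of the dictionary just set up yields exactly $\theta\pi_k\circ\ff{r}\pi_u=\ff{s}\pi_v\circ\alpha$, which is the commuting square — equivalently the elevator identity — in the statement; the pasting-diagram version is then just a rewriting of this square using the pseudocone axiom PC1 for $\ff{X}$, and the factorization $\pi_k$, $\ff{X}_u\pi_k\cong\pi_i$, $\ff{X}_v\pi_k\cong\pi_j$ accounts for the left-hand diagram.

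The one genuinely delicate point is the bookkeeping behind this last translation: verifying that ``a premorphism represents a morphism of $\cc{L}(\ff{F})$'' corresponds, under \eqref{basica2}, exactly to the displayed square, with the pseudocone 2-cells $\pi_u,\pi_v$ of $\ff{X}$ playing the roles of the structural isomorphisms $(\lambda_u)_\ff{r},(\lambda_v)_\ff{s}$ of the pseudocolimit, keeping all the variances straight (recall again that $\ff{X}$ is contravariant on $\cc{I}$, so $\ff{X}_u\colon\ff{X}_k\mr{}\ff{X}_i$ while $\ff{F}$ is covariant). Finally, for the last sentence: if $\alpha$ is invertible then $\theta\pi_k=\ff{s}\pi_v\circ\alpha\circ(\ff{r}\pi_u)^{-1}$ is invertible in $\Pro{C}(\ff{X},\ff{C})\cong\cc{L}(\ff{F})$; since the latter is a 2-filtered pseudocolimit of categories, invertibility of the image of $\theta$ is witnessed by a further transition $k\mr{w}k'$, so after replacing $u,v,\theta$ by $wu,wv,\theta\ff{X}_w$ we may assume $\theta$ itself invertible.
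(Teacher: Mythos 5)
Your proposal is correct and follows essentially the same route as the paper: it deduces the lemma from the basic formula \eqref{basica2} together with the explicit description of 2-filtered pseudocolimits in Construction LL, reading $\alpha$ as a class $[u,\theta,v]$ of a premorphism $(\ff{r},i)\mr{}(\ff{s},j)$ in $\coLim{i\in\cc{I}}{\cc{C}(\ff{X}_i,\ff{C})}$ and unwinding the dictionary. Your extra remark that invertibility of $\theta$ may require a further transition $k\mr{w}k'$ is a welcome refinement of the paper's bare ``clearly''.
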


\begin{proof}
By formula $\ref{iso}$ and the construction of 2-filtered pseudocolimits (\ref{defLF}), $\alpha$ corresponds to a $(\ff{r},i)\mr{[u,\theta,v]} (\ff{s},j)\in \coLim{i\in \cc{I}}{\cc{C}(\ff{X}_i,\ff{C})}$ . So, $\exists \vcenter{\xymatrix@R=-0.6pc@C=0.8pc{i \ar[rd]^{u} &\\& k \\ j\ar[ru]_{v}}}$ and $\ff{X}_{k} \cellrd{\ff{r} \ff{X}_u}{\theta}{\ff{s} \ff{X}_v} \ff{C}$ such that $\pi_v \ff{s}\circ \alpha=\pi_k \theta \circ \pi_u \ff{r}$\ \ \ i.e.

\hspace{8ex} 
$\vcenter{\xymatrix@C=-2ex
          {
           \ff{r} \deq & & & \pi_i \dcellop{\pi_u} & 
          \\
           \ff{r} \dl & \comw{\ff{X}_u} \dc{\theta} & \ff{X}_u \dr & & \pi_k \deq
          \\
           \ \ff{s} \  && \ff{X}_v & & \pi_k
          }}
\vcenter{\xymatrix{\comw{\pi_j \ff{f}} \\ = \\ \comw{\pi_j \ff{f}} }}
\vcenter{\xymatrix@C=-2ex
          {
           \ff{r} \dl && \ar@{}[dl]|{\alpha} & \pi_i \dr 
          \\
           \ff{s} \deq &  &  & \pi_j \dcellop{\pi_v}
          \\
           \ff{s} & \comw{\ff{X}_v} & \ff{X}_v & & \pi_k
          }}$, \;\; as we wanted to prove.
\end{proof}

The following is an immediate consequence of \cite[Lemma 2.2.]{DS}

\begin{remark} \label{u=v}
If $i=j$, then one can choose $u=v$. \cqd
\end{remark}
\begin{lemma}\label{lema4}

Let $\ff{X}=(\ff{X}_i)_{i\in \cc{I}}\in 2$-$\cc{P}ro(\cc{C})$ and $\ff{X}_i\cellpairrd{\ff{f}}{\theta}{\theta'}{\ff{g}}\ff{C} \in \cc{C}$ be such that $\theta\pi_i=\theta'\pi_i$ in $2$-$\cc{P}ro(\cc{C})$. Then $\exists \ i\mr{u} i'$ such that $\theta \ff{X}_u=\theta' \ff{X}_u$.
\end{lemma}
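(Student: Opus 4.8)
The plan is to reduce Lemma \ref{lema4} to the computation of morphisms in $\Pro{C}$ given by formula \eqref{basica2}, exactly as in the proof of Lemma \ref{lema1}. By \ref{iso}, with $\ff{Y}$ the constant 2-pro-object $\ff{C}$ (index 2-category $\{*\}$), we have $\Pro{C}(\ff{X},\ff{C}) \cong \coLim{i\in\cc{I}}{\cc{C}(\ff{X}_i,\ff{C})}$, and under this isomorphism the 2-cell $\theta\pi_i \Mr{} \theta'\pi_i$ — wait, these are equal by hypothesis — corresponds to an equality of morphisms in the category $\coLim{i\in\cc{I}}{\cc{C}(\ff{X}_i,\ff{C})}$. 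More precisely, $\ff{f}\pi_i$ and $\ff{g}\pi_i$ are the images under $\lambda_i$ of $\ff{f},\ff{g}\in\cc{C}(\ff{X}_i,\ff{C})$, and $\theta\pi_i$, $\theta'\pi_i$ are the images $\lambda_i(\theta)=[i,\theta,i]$, $\lambda_i(\theta')=[i,\theta',i]$ under construction \ref{defLF}. So the hypothesis $\theta\pi_i=\theta'\pi_i$ says precisely that $[i,\theta,i]=[i,\theta',i]$ in $\cc{L}(\cc{C}(\ff{X}_\bullet,\ff{C}))$.

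Next I would unwind what equality of these two premorphism classes means. Two premorphisms $(u_1,f_1,v_1)$ and $(u_2,f_2,v_2)$ are equal in $\cc{L}(\ff{F})$ iff there is a homotopy between them, i.e. 1-cells $w_1,w_2$ and invertible 2-cells $\alpha\colon w_1v_1\cong w_2v_2$, $\beta\colon w_1u_1\cong w_2u_2$ making the square of \ref{defLF} commute. Applying this to our two premorphisms $(id_i,\theta,id_i)$ and $(id_i,\theta',id_i)$ — both with the same source and target indices — yields $i\mr{w_1}k$, $i\mr{w_2}k$ and invertible 2-cells $\alpha\colon w_1\cong w_2$, $\beta\colon w_1\cong w_2$ such that $\ff{F}(\alpha)_\ff{C}\circ \ff{F}(w_1)(\theta) = \ff{F}(w_2)(\theta')\circ\ff{F}(\beta)_\ff{C}$, where $\ff{F}=\cc{C}(\ff{X}_\bullet,\ff{C})$, so $\ff{F}(w)(\theta) = \theta\,\ff{X}_w$ and $\ff{F}(\alpha)_\ff{C} = \ff{X}_\alpha$ etc. — i.e.
$$
\ff{X}_\alpha \circ (\theta\,\ff{X}_{w_1}) = (\theta'\,\ff{X}_{w_2})\circ \ff{X}_\beta \quad \text{in } \cc{C}(\ff{X}_k,\ff{C}).
$$
At this point $\theta\,\ff{X}_{w_1}$ and $\theta'\,\ff{X}_{w_2}$ are conjugate via the invertible 2-cells $\ff{X}_\alpha,\ff{X}_\beta$ in $\cc{I}$ (applied by $\ff{X}$), but I still need to (a) make the two legs $w_1,w_2$ into a single 1-cell $i\mr{u}i'$, and (b) get rid of the conjugating 2-cells so the equality becomes strict $\theta\,\ff{X}_u = \theta'\,\ff{X}_u$. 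For (a): the pair $w_1,w_2\colon i\to k$ plus the invertible 2-cell (say $\alpha\colon w_1\cong w_2$) is precisely the kind of data one equalizes using axiom F1 — but actually $w_1,w_2$ already have a common codomain, so I instead use axiom F1 on the pair $i\mrpair{w_1}{w_2}k$ to get $k\mr{h}k'$ with an invertible $\gamma\colon hw_1\cong hw_2$. For (b): the real content — I need $\theta\,\ff{X}_{hw_1}$ and $\theta'\,\ff{X}_{hw_2}$ to become genuinely equal after a further arrow; having $hw_1\cong hw_2$ and a coherence relating $\ff{X}_\alpha,\ff{X}_\beta,\ff{X}_\gamma$ and then applying axiom F2 to kill the discrepancy 2-cell. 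This is exactly the content of \cite[Lemma 2.2]{DS} (cited in Remark \ref{u=v}): the statement that in $\cc{L}(\ff{F})$, two premorphisms of the special form $(id,\theta,id)$ and $(id,\theta',id)$ with equal boundary are equal iff there is a single $u\colon i\to i'$ with $\ff{F}(u)(\theta)=\ff{F}(u)(\theta')$.

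So the cleanest plan: invoke \ref{iso} to translate $\theta\pi_i=\theta'\pi_i$ into $[i,\theta,i]=[i,\theta',i]$ in the 2-filtered pseudocolimit $\coLim{i}{\cc{C}(\ff{X}_i,\ff{C})}$ of construction \ref{defLF}, and then apply \cite[Lemma 2.2]{DS} — the same result already being used in Remark \ref{u=v} — which gives directly the desired arrow $i\mr{u}i'$ with $\theta\,\ff{X}_u = \theta'\,\ff{X}_u$. The only thing to check explicitly is that $\lambda_i$ sends a 2-cell $\theta\colon\ff{f}\Rightarrow\ff{g}$ of $\cc{C}(\ff{X}_i,\ff{C})$ to $[i,\theta,i]$ and that $\ff{F}(u)(\theta) = \theta\ff{X}_u$, both of which are immediate from \ref{construccionDS} and the definition of $\ff{F}=\cc{C}(\ff{X}_\bullet,\ff{C})$. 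The main obstacle is really just correctly identifying that the equality of 2-cells in $\Pro{C}$ unwinds to equality of \emph{premorphism classes} (not premorphisms) in the DS-colimit — once that is in place, the filtration axioms F1, F2 (packaged in \cite[Lemma 2.2]{DS}) do all the remaining work, and no genuinely new argument is needed.
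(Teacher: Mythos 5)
Your proposal is correct and follows essentially the same route as the paper: translate the hypothesis via the isomorphism of Proposition \ref{iso} and Proposition \ref{construccionDS} into the equality of classes $[i,\theta,i]=[i,\theta',i]$ in the Dubuc--Street pseudocolimit, and then invoke the lemma of \cite{DS} asserting that homotopic premorphisms of the form $(id,\theta,id)$, $(id,\theta',id)$ become equal after applying a single $u$. The only slip is the reference: the statement you need is \cite[Lemma 1.20]{DS} (the one the paper cites here), whereas \cite[Lemma 2.2]{DS} is the source of Remark \ref{u=v}.
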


\begin{proof}
It follows from \ref{basica2} and \cite[Lemma 1.20]{DS}.
\end{proof}

\begin{lemma}\label{lema5}
Let $\ff{X} \cellrd{\ff{f}}{\alpha}{\ff{g}} \ff{Y}$ in $2$-$\cc{P}ro(\cc{C})$ and
$\ff{X}_i \cellpairrd{\ff{r}}{\theta}{\theta'}{\ff{s}} \ff{Y}_j$ in $\cc{C}$ such that $(\theta,\,\ff{r},\,\varphi,\,\ff{s},\,\psi)$ and $(\theta',\,\ff{r},\,\varphi,\,\ff{s},\,\psi)$ both represents $\alpha$. Then, there exists $i \mr{u} i' \in \cc{I}$ such that $\theta \ff{X}_u=\theta' \ff{X}_u$.
\end{lemma}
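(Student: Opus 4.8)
The plan is to cancel the invertible comparison $2$-cells and then apply Lemma~\ref{lema4}.

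First I would unwind Definition~\ref{representa}(2). By hypothesis the two quintuples $(\theta,\ff{r},\varphi,\ff{s},\psi)$ and $(\theta',\ff{r},\varphi,\ff{s},\psi)$ represent $\alpha$ and differ only in their first entry; in particular they carry the \emph{same} invertible $2$-cells $\varphi\colon\pi_j\ff{f}\Rightarrow\ff{r}\pi_i$ (witnessing that $(\ff{r},\varphi)$ represents $\ff{f}$) and $\psi\colon\pi_j\ff{g}\Rightarrow\ff{s}\pi_i$ (witnessing that $(\ff{s},\psi)$ represents $\ff{g}$). The commutativity of the defining square for each of them therefore reads, in $\Pro{C}$,
$$
(\theta\pi_i)\circ\varphi \;=\; \psi\circ(\pi_j\alpha) \;=\; (\theta'\pi_i)\circ\varphi .
$$
Since $\varphi$ is invertible, composing on the right with $\varphi^{-1}$ gives $\theta\pi_i = \theta'\pi_i$ in $\Pro{C}$.

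Now $\theta,\theta'\colon\ff{r}\Rightarrow\ff{s}$ are $2$-cells between arrows $\ff{X}_i\to\ff{Y}_j$ of $\cc{C}$ (with $\ff{Y}_j\in\cc{C}$), and we have just shown that they become equal after whiskering with the projection $\pi_i\colon\ff{X}\to\ff{X}_i$. This is exactly the hypothesis of Lemma~\ref{lema4} (applied with $\ff{C}=\ff{Y}_j$, $\ff{f}=\ff{r}$, $\ff{g}=\ff{s}$), which then yields an arrow $i\mr{u} i'$ in $\cc{I}$ with $\theta\ff{X}_u=\theta'\ff{X}_u$, as required.

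There is essentially no obstacle here. The only subtlety is that the two representing data are assumed to share the same $\varphi$ and $\psi$: this is precisely what makes the comparison $2$-cells cancel and reduces the statement to Lemma~\ref{lema4}. Once this is noticed the proof is immediate.
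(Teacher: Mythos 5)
Your proposal is correct and follows the same route as the paper: cancel the shared invertible $2$-cell $\varphi$ in the two defining squares to get $\theta\pi_i=\theta'\pi_i$, then apply Lemma~\ref{lema4} with $\ff{C}=\ff{Y}_j$. The only difference is that you spell out the cancellation explicitly, which the paper leaves implicit.
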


\begin{proof}
Since both $(\theta,\ff{r},\varphi,\ff{s},\psi)$ and $(\theta',\ff{r},\varphi,\ff{s},\psi)$ represents $\alpha$, and $\varphi\hbox{, }\psi$ are invertible, it follows that  $\theta\pi_i=\theta'\pi_i$. Then, by \ref{lema4}, there exists $i \mr{u} i' \in \cc{I}$ such that $\theta \ff{X}_u=\theta' \ff{X}_u$.
\end{proof}

\begin{lemma}\label{lema2}
Let $\ff{X} \cellrd{\ff{f}}{\alpha}{\ff{g}} \ff{Y} \in $ $2$-$\cc{P}ro(\cc{C})$, $(\ff{r},\varphi)$ representing $\ff{f}$, \mbox{$\ff{X}_i\mr{\ff{r}} \ff{Y}_j$} and $(\ff{s},\psi)$ representing $\ff{g}$, $\ff{X}_{i'}\mr{\ff{s}} \ff{Y}_j$. Then, $\exists \vcenter{\xymatrix@R=-0.6pc@C=0.8pc{i \ar[rd]^{u} &\\& k \\ i'\ar[ru]_{v}}}$ and $\ff{X}_{k} \cellrd{\ff{r}\ff{X}_u}{\theta}{\ff{s}\ff{X}_v} \ff{Y}_j$  such that $(\theta,\; \ff{r} \ff{X}_u,\; \ff{r} \pi_u \circ \varphi, \; \ff{s}\ff{X}_v, \; \ff{s} \pi_v \circ \psi)$ represents $\alpha$.
Clearly, if $\alpha$ is invertible, then so is $\theta$.
\end{lemma}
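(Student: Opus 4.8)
The plan is to reduce the statement to Lemma \ref{lema1}. Since $(\ff{r},\varphi)$ represents $\ff{f}$ and $(\ff{s},\psi)$ represents $\ff{g}$, we have invertible $2$-cells $\varphi\colon\pi_j\ff{f}\Rightarrow\ff{r}\,\pi_i$ and $\psi\colon\pi_j\ff{g}\Rightarrow\ff{s}\,\pi_{i'}$ in $\Pro{C}$, where $\pi_i,\pi_{i'},\pi_j$ denote the projections of Proposition \ref{X=lim}. Whiskering $\alpha$ with $\pi_j\colon\ff{Y}\to\ff{Y}_j$ gives $\pi_j\alpha\colon\pi_j\ff{f}\Rightarrow\pi_j\ff{g}$, and conjugating it by $\varphi$ and $\psi$ yields the $2$-cell
\[
\beta \;:=\; \psi\circ\pi_j\alpha\circ\varphi^{-1}\;\colon\;\ff{r}\,\pi_i\;\Longrightarrow\;\ff{s}\,\pi_{i'}
\]
in $\Pro{C}$, which is a $2$-cell of exactly the form $\ff{r}\pi_i\Rightarrow\ff{s}\pi_{i'}$ to which Lemma \ref{lema1} applies (taking $\ff{C}=\ff{Y}_j$ and letting $i'$ play the role of the second index there).

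Applying Lemma \ref{lema1} to $\beta$, I obtain a span $i\mr{u}k\ml{v}i'$ and a $2$-cell $\theta\colon\ff{r}\ff{X}_u\Rightarrow\ff{s}\ff{X}_v$ between the two resulting arrows $\ff{X}_k\to\ff{Y}_j$, such that
\[
\theta\,\pi_k\circ\ff{r}\,\pi_u \;=\; \ff{s}\,\pi_v\circ\beta ,
\]
and moreover $\theta$ is invertible whenever $\beta$ is, i.e. whenever $\alpha$ is. I then claim that the tuple $(\theta,\ \ff{r}\ff{X}_u,\ \ff{r}\pi_u\circ\varphi,\ \ff{s}\ff{X}_v,\ \ff{s}\pi_v\circ\psi)$ represents $\alpha$ in the sense of Definition \ref{representa}(2), which is exactly the assertion of the lemma.

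To verify the claim I check the three conditions of Definition \ref{representa}(2). That $(\ff{r}\ff{X}_u,\ \ff{r}\pi_u\circ\varphi)$ represents $\ff{f}$ and $(\ff{s}\ff{X}_v,\ \ff{s}\pi_v\circ\psi)$ represents $\ff{g}$ is immediate, since the pseudocone $2$-cells $\pi_u\colon\pi_i\Rightarrow\ff{X}_u\pi_k$ and $\pi_v\colon\pi_{i'}\Rightarrow\ff{X}_v\pi_k$ are invertible (Proposition \ref{X=lim}) and $\varphi,\psi$ are invertible, so $\ff{r}\pi_u\circ\varphi\colon\pi_j\ff{f}\Rightarrow\ff{r}\ff{X}_u\pi_k$ and $\ff{s}\pi_v\circ\psi\colon\pi_j\ff{g}\Rightarrow\ff{s}\ff{X}_v\pi_k$ are invertible $2$-cells of the required shape. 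For the coherence square of Definition \ref{representa}(2) I must check that $\theta\pi_k\circ(\ff{r}\pi_u\circ\varphi)$ equals $(\ff{s}\pi_v\circ\psi)\circ\pi_j\alpha$; using the identity supplied by Lemma \ref{lema1}, the left-hand side rewrites as $\ff{s}\pi_v\circ\beta\circ\varphi=\ff{s}\pi_v\circ\psi\circ\pi_j\alpha\circ\varphi^{-1}\circ\varphi=\ff{s}\pi_v\circ\psi\circ\pi_j\alpha$, which is the right-hand side. The invertibility clause of the lemma has already been recorded in the previous paragraph.

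There is essentially no genuinely hard step here: once $\beta$ is introduced the whole argument is a formal manipulation of whiskerings and vertical composites. The only point requiring care is the bookkeeping — keeping track of the directions of the whiskerings $\ff{r}\pi_u$, $\ff{s}\pi_v$ and of which way each of the several squares commutes — which is handled most transparently in the elevator calculus, where the computation of the previous paragraph is nothing more than repeated use of the basic move \eqref{basicelevator} together with $\varphi^{-1}\circ\varphi=id$.
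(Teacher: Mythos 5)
Your proposal is correct and follows essentially the same route as the paper: the paper's proof likewise applies Lemma \ref{lema1} with $\ff{C}=\ff{Y}_j$ to the conjugated $2$-cell $\psi\circ\pi_j\alpha\circ\varphi^{-1}$ (your $\beta$) and then rearranges the resulting identity by composing with $\varphi$ to obtain the representing square. Your explicit verification that the composite $2$-cells $\ff{r}\pi_u\circ\varphi$ and $\ff{s}\pi_v\circ\psi$ are invertible of the required shape is a detail the paper leaves implicit, but the argument is the same.
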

\begin{proof}
In lemma $\ref{lema1}$, take $\ff{C}=\ff{Y}_j$, and
$\alpha = \psi \circ \pi_j\alpha \circ \varphi^{-1}$. Then, $\exists \vcenter{\xymatrix@R=-0.6pc@C=0.8pc{i \ar[rd]^{u} &\\& k \\ i'\ar[ru]_{v}}}$ and $\ff{X}_{k} \cellrd{\ff{r}\ff{X}_u}{\theta}{\ff{s}\ff{X}_v} \ff{Y}_j$ such that $
 \; \theta \pi_k \circ \ff{r} \pi_u \,=\, s\pi_v \circ \psi \circ \pi_j \alpha \circ \varphi^{-1} \,,$ or equivalently
$
 \; \theta \pi_k \circ \ff{r} \pi_u \circ \varphi \,=\, s\pi_v \circ \psi \circ \pi_j \alpha \,,$ i.e. the following diagram commutes in $\Pro{C}$:

\vspace{-2ex}

$$
\xymatrix@R=1ex
       {
        & {\ff{r} \pi_i} \ar@{=>}[r]^{\ff{r} \pi_u}
        & {\ff{r} \ff{X}_u \pi_k}
                         \ar@{=>}[rd]^{\theta \pi_k}
        \\
          {\pi_j \ff{f}} \ar@{=>}[ru]^{\varphi}
                         \ar@{=>}[rd]^{\pi_j \alpha}
        &&& {\ff{s} \ff{X}_v \pi_k}
        \\
        & {\pi_j \ff{g}} \ar@{=>}[r]^{\psi}
        & {\ff{s} \pi_{i'}} \ar@{=>}[ru]^{\ff{s} \pi_v}
       }
       \vcenter{\xymatrix@C=-.4pc{\quad i.e.  \quad  }}
\vcenter{\xymatrix@C=-2ex
          {\pi_j \dl && \ar@{}[dl]|{\varphi} & \ff{f} \dr \\
          \ff{r} \deq &&& \pi_i \dcellop{\pi_u} \\
          \ff{r} \dl & \comw{\ff{X}_u} \dc{\theta} & \ff{X}_u \dr && \pi_k \deq \\
          \ff{s} && \ff{X}_v && \pi_k           }
          }
\vcenter{\xymatrix{\comw{\pi_j \ff{f}} \\ = \\ \comw{\pi_j \ff{f}} }}
\vcenter{\xymatrix@C=-2ex
          {\pi_j \deq &&& \ff{f} \dcell{\alpha} \\
          \pi_j \dl && \ar@{}[dl]|{\psi} & \ff{g} \dr \\
          \ff{s} \deq &&& \pi_{i'} \dcellop{\pi_v} \\
          \ff{s} &\comw{\ff{X}_v} & \ff{X}_v && \pi_k       
          }}
$$

This proves that $(\theta,\; \ff{r} \ff{X}_u,\; \ff{r} \pi_u \circ \varphi, \; \ff{s}\ff{X}_v, \; \ff{s} \pi_v \circ \psi)$ \mbox{represents $\alpha$.}
\end{proof}
From remark \ref{u=v} we have:
\begin{remark}
If $i=i'$, then one can choose $u=v$. \cqd
\end{remark}

\vspace{0.1ex}
 
\subsection{2-cofiltered pseudolimits in $2\hbox{-}\cc{P}ro(\cc{C})$.}  $ $

Let $\cc{J}$ be a small  2-filtered 2-category and
$\cc{J}^{op} \mr{\ff{X}}  2\hbox{-}\cc{P}ro(\cc{C})$ a \mbox{2-functor,}
\mbox{$\ff{X}^j = (\ff{X}^j_i)_{i\in \cc{I}_j}$,} $\cc{I}_j^{op} \mr{\ff{X}^j} \cc{C}$. Recall (\ref{X=lim}) that for each $j$ in $\cc{J}$, $\ff{X}^j$ is equipped with a pseudolimit pseudocone $\{\pi^j_i\}_{i \in \cc{I}_j}$,
$\{\pi^j_u\}_{i \mr{u} i' \in \cc{I}_j}$ for the \mbox{2-functor}
$\ff{X}^j$.

We are going to construct a 2-pro-object which is going to be the pseudolimit of
$\ff{X}$ in $2\hbox{-} \cc{P}ro(\cc{C})$. First we construct its index category
\begin{definition}\label{kequis}
Let $\cc{K}_\ff{X}$ be the 2-category consisting on:
\vspace{2ex}
\begin{enumerate}
	\item 0-cells of $\cc{K}_\ff{X}$: $(i,j)$, where $j\in \cc{J}$, $i\in \cc{I}_j$.
	\vspace{1ex}
	\item 1-cells of $\cc{K}_\ff{X}$: $(i,j)\mr{(a,\ff{r},\varphi)} (i',j')$, where $j\smr{a} j'\in \cc{J}$, $\ff{X}_{i'}^{j'}\smr{\ff{r}} \ff{X}_{i}^{j}$ are such that $(\ff{r},\varphi)$ represents $\ff{X}^a$.
	\vspace{1ex}
	\item 2-cells of $\cc{K}_\ff{X}$: $(a,\ff{r},\varphi)\Mr{(\alpha,\theta)}(b,\ff{s},\psi)$, where $a \Mr{\alpha} b \in \cc{J}$ and $(\theta,\ff{r},\varphi,\ff{s},\psi)$ represents $\ff{X}^\alpha$.
\end{enumerate}
\vspace{2ex}

The 2-category structure is given as follows:

$$
\xymatrix@C=6pc
             {
              (i,j) \ar@<4.8ex>[r]^{(a,\ff{r},\varphi)}
                    \ar@{}@<3.5ex>[r]|{\Downarrow(\alpha,\theta)}
                    \ar[r]^{(b,\ff{s},\psi)}
                    \ar@{}@<-1.3ex>[r]|{\Downarrow(\beta,\eta)}
                    \ar@<-4.8ex>[r]^{(c,\ff{t},\phi)}
                 &
             (i',j') \ar@<4.8ex>[r]^{(a',\ff{r}',\varphi')}
                     \ar@{}@<3.5ex>[r]|{\Downarrow(\alpha',\theta')}
                     \ar[r]^{(b', \ff{s}',\psi')}
                     \ar@{}@<-1.3ex>[r]|{\Downarrow(\beta',\eta')}
                     \ar@<-4.8ex>[r]^{(c', \ff{t}',\phi')}
                 &
             (i'',j'')
             }
$$

\vspace{2ex}
\begin{enumerate}
\item $(a',\ff{r}',\varphi')(a,\ff{r},\varphi)=(a' a,\ff{r}\ff{r}',\ff{r}\varphi'\circ \varphi \ff{X}^{a'})$
\vspace{1ex}
\item $(\alpha',\theta')(\alpha,\theta)=(\alpha'\alpha,\theta\theta')$
\vspace{1ex}
\item $(\beta,\eta)\circ (\alpha,\theta)=(\beta\circ \alpha,\eta\circ\theta)$

\end{enumerate}

One can easily check that the structure so defined is indeed a 2-category, which  is clearly small.\end{definition}

\begin{proposition}
The 2-category $\cc{K}_\ff{X}$ is 2-filtered.
\end{proposition}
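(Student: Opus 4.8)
The plan is to verify directly the three axioms F0, F1, F2 of Definition \ref{2filtered} for $\cc{K}_\ff{X}$, building everything from the $2$-filteredness of $\cc{J}$ and of each index $2$-category $\cc{I}_j$ (available since every $\ff{X}^j$ is a $2$-pro-object) together with the representation lemmas \ref{idrepresenta}, \ref{lema2}, \ref{lema5}. The elementary tool I would isolate first is \emph{index refinement}: if $(\ff{r},\varphi)$ represents $\ff{f}\colon\ff{X}^{j'}\to\ff{X}^j$ with $\ff{r}\colon\ff{X}^{j'}_{i'}\to\ff{X}^j_i$, then for every $w\colon i'\to i''$ in $\cc{I}_{j'}$ the pair $(\ff{r}\,\ff{X}^{j'}_w,\ \ff{r}\,\pi^{j'}_w\circ\varphi)$ again represents $\ff{f}$ (now with index $i''$), and likewise a representation of a $2$-cell may be whiskered on the right by $\ff{X}^{j'}_w$; this is immediate from the pseudocone structure of $\ff{X}^{j'}$ (\ref{X=lim}) and is a special case of the computations in \ref{lema1}--\ref{lema2}. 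For F0, given $(i_1,j_1),(i_2,j_2)$, I would use F0 in $\cc{J}$ to get $j$ with $a_k\colon j_k\to j$, then \ref{idrepresenta}(1) applied to $\ff{X}^{a_k}\colon\ff{X}^j\to\ff{X}^{j_k}$ to obtain $i_k'\in\cc{I}_j$ and $\ff{r}_k\colon\ff{X}^j_{i_k'}\to\ff{X}^{j_k}_{i_k}$ with $(\ff{r}_k,id)$ representing $\ff{X}^{a_k}$, then F0 in the $2$-filtered $\cc{I}_j$ to get $i$ with $w_k\colon i_k'\to i$; index refinement then makes $(a_k,\ff{r}_k\ff{X}^j_{w_k},\ff{r}_k\pi^j_{w_k})\colon(i_k,j_k)\to(i,j)$ the required $1$-cells.

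For F1, given parallel $1$-cells $(a,\ff{r},\varphi),(b,\ff{s},\psi)\colon(i,j)\to(i',j')$, I would apply F1 in $\cc{J}$ to $a,b$ to get $c\colon j'\to j''$ and an invertible $\gamma\colon ca\cong cb$; since $\ff{X}$ is a $2$-functor, $\ff{X}^\gamma\colon\ff{X}^{ca}\cong\ff{X}^{cb}$ is invertible and $\ff{X}^{ca}=\ff{X}^a\ff{X}^c$, $\ff{X}^{cb}=\ff{X}^b\ff{X}^c$. Choosing $\ff{t}_0\colon\ff{X}^{j''}_{i_0''}\to\ff{X}^{j'}_{i'}$ with $(\ff{t}_0,id)$ representing $\ff{X}^c$ (via \ref{idrepresenta}(1)), the pairs $(\ff{r}\ff{t}_0,\varphi\ff{X}^c)$ and $(\ff{s}\ff{t}_0,\psi\ff{X}^c)$ represent $\ff{X}^{ca}$ and $\ff{X}^{cb}$ \emph{with the same source index} $i_0''$; so Lemma \ref{lema2} applied to the $2$-cell $\ff{X}^\gamma$, together with \ref{u=v} to force a single refining arrow $w\colon i_0''\to i''$, yields $\omega$ such that $(\omega,\ \ff{r}\ff{t}_0\ff{X}^{j''}_w,\ -,\ \ff{s}\ff{t}_0\ff{X}^{j''}_w,\ -)$ represents $\ff{X}^\gamma$. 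Setting $\ff{t}=\ff{t}_0\ff{X}^{j''}_w$, index refinement makes $(\ff{t},\ff{t}_0\pi^{j''}_w)$ represent $\ff{X}^c$, so $(c,\ff{t},\ff{t}_0\pi^{j''}_w)\colon(i',j')\to(i'',j'')$ is a $1$-cell of $\cc{K}_\ff{X}$; using the composition formulas of \ref{kequis} I would then check that its composites with $(a,\ff{r},\varphi)$ and $(b,\ff{s},\psi)$ are exactly the source and target of $(\gamma,\omega)$, which is therefore a $2$-cell, and it is invertible because $\gamma$ is and, $\ff{X}^\gamma$ being invertible, $\omega$ is too by the last remark in \ref{representa}.

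For F2, given parallel $2$-cells $(\alpha,\theta),(\beta,\eta)\colon(a,\ff{r},\varphi)\Rightarrow(b,\ff{s},\psi)$ I would use F2 in $\cc{J}$ to get $c\colon j'\to j''$ with $c\alpha=c\beta$, hence $\ff{X}^\alpha\ff{X}^c=\ff{X}^\beta\ff{X}^c$; choosing $\ff{t}_0$ with $(\ff{t}_0,id)$ representing $\ff{X}^c$ and composing, both $(\theta\ff{t}_0,\ff{r}\ff{t}_0,\varphi\ff{X}^c,\ff{s}\ff{t}_0,\psi\ff{X}^c)$ and $(\eta\ff{t}_0,\ff{r}\ff{t}_0,\varphi\ff{X}^c,\ff{s}\ff{t}_0,\psi\ff{X}^c)$ represent $\ff{X}^\alpha\ff{X}^c$ with identical decorated data, so Lemma \ref{lema5} gives $w\colon i_0''\to i''$ with $\theta\ff{t}_0\ff{X}^{j''}_w=\eta\ff{t}_0\ff{X}^{j''}_w$; taking $\ff{t}=\ff{t}_0\ff{X}^{j''}_w$, the $1$-cell $(c,\ff{t},\ff{t}_0\pi^{j''}_w)$ then left-whiskers $(\alpha,\theta)$ and $(\beta,\eta)$ to the same pair $(c\alpha,\theta\ff{t})=(c\beta,\eta\ff{t})$ by the composition formulas of \ref{kequis}. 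The step I expect to be the real obstacle is making these common-refinement arguments compatible with the bookkeeping forced by Definition \ref{kequis}: \ref{idrepresenta} delivers only \emph{some} representative of a morphism or $2$-cell of $\Pro{C}$, whereas the cells of $\cc{K}_\ff{X}$ require the representatives of composites to be literal composites $\ff{r}\ff{t},\ff{s}\ff{t}$ with a \emph{common} $\ff{t}$ and carrying the precise decorations $\ff{r}\varphi'\circ\varphi\ff{X}^{a'}$ prescribed there; this is exactly why I route the proof through the refined Lemma \ref{lema2} (not \ref{idrepresenta}(2)) and the clause \ref{u=v}, after which the verification that the decorations $\varphi\ff{X}^c$, $\ff{r}\ff{t}_0\pi^{j''}_w$, etc.\ match the formulas of \ref{kequis} is routine but must still be carried out.
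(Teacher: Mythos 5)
Your proposal is correct and follows essentially the same route as the paper's own proof: F0 via 2-filteredness of $\cc{J}$, Proposition \ref{idrepresenta}(1) and 2-filteredness of $\cc{I}_{j''}$ plus index refinement; F1 via F1 in $\cc{J}$, a representative $(\ff{t},id)$ of $\ff{X}^c$, Lemma \ref{lema2} with the common-index Remark \ref{u=v}; and F2 via F2 in $\cc{J}$ and Lemma \ref{lema5}. The "index refinement" principle you isolate up front is exactly what the paper uses implicitly when it writes representatives of the form $(\ff{r}\ff{X}^{j''}_u,\ff{r}\pi^{j''}_u)$, and your concern about matching the decorations to the composition formulas of Definition \ref{kequis} is resolved the same way the paper resolves it, namely by the precise form of the 2-cell produced in Lemma \ref{lema2}.
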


\vspace{-2ex}

\begin{proof}

\vspace{-1ex}

$F0.$ Let $(i,j)$,$(i',j')\in \cc{K}_\ff{X}$. Since $\cc{J}$ is 2-filtered, $\exists \vcenter{\xymatrix@R=-0.6pc@C=0.8pc{j \ar[rd]^{a} &\\& j'' \\ j'\ar[ru]_{b}}}$. By \ref{idrepresenta}, $\exists\  \ff{X}_{i_1}^{j''}\mr{\ff{r}_1}{} \ff{X}_i^j$ and $ \ff{X}_{i_2}^{j''}\mr{\ff{r}_2}{} \ff{X}_{i'}^{j'}$ such that $(\ff{r}_1,id)$ represents $\ff{X}^{a}$ and $(\ff{r}_2,id)$ represents $\ff{X}^{b}$.
Since $\cc{I}_{j''}$ is 2-filtered, $\exists \vcenter{\xymatrix@R=-0.6pc@C=0.8pc{i_1 \ar[rd]^{u} &\\ & i'' \\ i_2\ar[ru]_{v}}}$. Then, we have the following situation in $\cc{K}_\ff{X}$ which proves $F0.$:
$$
\xymatrix@R=.3pc@C=6pc
         {
           (i,j) \hspace{2ex} \ar[rd]^*[l]
                 {
                  \hspace{-5ex} (a,\ff{r}_1 \ff{X}^{j''}_{u},\ff{r}
                                                        _1{\pi_{u}^{j''}})
                 }
           &
           \\
           & \hspace{3ex} (i'',j'')
           \\ (i',j') \hspace{2ex} \ar[ru]_*[l]
                 {
                  \hspace{-5ex} (b,\ff{r}_2 \ff{X}^{j''}_{v},\ff{r}
                                                        _2{\pi_{v}^{j''}})
                 }
          }
$$

\vspace{-1ex}

$F1.$ Let $\xymatrix{(i,j)\ar@<1ex>[r]^{(a,\ff{r},\varphi)} \ar@<-1ex>[r]_{(b,\ff{s},\psi)} & (i',j')}\in \cc{K}_\ff{X}$. Since $\cc{J}$ is 2-filtered, $\exists \  j'\mr{c} j''$ and an invertible 2-cell $ca \Mr{\alpha}cb$. By \ref{idrepresenta}, $\exists \ \ff{X}_{k}^{j''}\mr{\ff{t}} \ff{X}_{i'}^{j'}$ such that $(\ff{t},id)$ \mbox{represents} $\ff{X}^c$. Then $(\ff{r}\ff{t},\varphi \ff{X}^c)$ represents $\ff{X}^{ca}$ and $(\ff{s}\ff{t},\psi \ff{X}^c)$ \mbox{represents} $\ff{X}^{cb}$, so, by \ref{lema2},
there \mbox{exists} \mbox{$k \mr{w} i''\in \cc{I}_{j''}$} and an invertible 2-cell \mbox{$\ff{r}\ff{t}\ff{X}^{j''}_{w}\Mr{\theta}\ff{s}\ff{t}\ff{X}^{j''}_{w}$} such that
\mbox{$(\theta,\,\ff{r}\ff{t}\ff{X}^{j''}_{w},\,\ff{r}\ff{t}\pi_{w} \circ \varphi \ff{X}^c,\,\ff{s}\ff{t}\ff{X}^{j''}_{w},\,\ff{s}\ff{t}\pi_{w} \circ \psi \ff{X}^c)$}
represents $\ff{X}^{\alpha}$.
Then we have an invertible 2-cell in $\cc{K}_\ff{X}$ $\xymatrix{(i,j)\ar@<1.5ex>[rr]^{(c,\ff{t}\ff{X}^{j''}_{w},t\pi_w)(a,\ff{r},\varphi)} \ar@<-1.5ex>[rr]_{(c,\ff{t}\ff{X}^{j''}_{w},\ff{t}\pi_{w})(b,\ff{s},\psi)} & \Downarrow (\alpha,\theta)& (i'',j'')}$ \mbox{which proves $F1.$}

$F2.$ Let $\xymatrix@C=0.2pc{(i,j)\ar@<1.5ex>[rr]^{(a,\ff{r},\varphi)} \ar@<-1.5ex>[rr]_{(b,\ff{s}, \psi)} & \Downarrow (\alpha,\theta) \Downarrow (\alpha',\theta')& (i',j')} \in \cc{K}_\ff{X}$. Since $\cc{J}$ is \mbox{2-filtered,} $\exists\ j' \mr{c} j'' \in \cc{J}$ such that \mbox{$c\alpha=c\alpha'$}. Also, by \ref{idrepresenta}, $\exists\  \ff{X}_{k}^{j''}\mr{\ff{t}} \ff{X}_{i'}^{j'}$ such that $(\ff{t},id)$ represents $\ff{X}^c$. Then, it is easy to check that $(\ff{t},\ff{t},id,\ff{t},id)$ represents $\ff{X}^c$ and therefore we have that $(\theta \ff{t},\ff{r}\ff{t},\varphi \ff{X}^c,\ff{s}\ff{t},\psi \ff{X}^c)$ and $(\theta'\ff{t},\ff{r}\ff{t},\varphi \ff{X}^c,\ff{s}\ff{t},\psi \ff{X}^c)$ both represent $\ff{X}^{c\alpha}$: 

$$ \vcenter{\xymatrix@C=-0.3pc {\pi_i \dl & \dc{\varphi} & \ff{X}^a \dr && \ff{X}^c \deq \\
			      \ff{r} \deq && \pi_{i'} \dl & \dc{=} & \ff{X}^c \dr \\
			      \ff{r} \dcell{\theta} && \ff{t} \deq & \! \comw{r} \! & \pi_k \deq \\
			      \ff{s} && \ff{t} && \pi_k }}
 \vcenter{\xymatrix@C=1pc {\ \ \ = \ \ \ }}
  \vcenter{\xymatrix@C=-.3pc {\pi_i \dl & \dc{\varphi} & \ff{X}^a \dr && \ff{X}^c \deq \\
			      \ff{r} \dcell{\theta} && \pi_{i'} \deq &  & \ff{X}^c \deq \\   
			      \ff{s}  \deq && \pi_{i'} \dl & \dc{=} & \ff{X}^c \dr\\
			      \ff{s} && \ff{t} & \! \comw{r} \! & \pi_k  }}
   \vcenter{\xymatrix@C=1pc {\ \ \ = \ \ \ }}
    \vcenter{\xymatrix@C=-.3pc { \pi_i \deq && \ff{X}^a \dcellb{\ff{X}^{\alpha} } && \ff{X}^c \deq \\
				  \pi_i \dl & \dc{\psi} & \ff{X}^b \dr && \ff{X}^c \deq \\
				  \ff{s} \deq && \pi_{i'} \dl &\dc{=} & \ff{X}^c \dr \\
				  \ff{s} && \ff{t} & \! \comw{r} \! & \pi_k }}$$

\noindent where the second equality is due to the fact that $(\theta,\ff{r},\varphi,\ff{s},\psi)$ represents $\ff{X}^\alpha$.

Then, by \ref{lema5}, $\exists \, k \mr{w} i'' \in \cc{I}_{j''}$ such that \mbox{$\theta  \ff{t} \ff{X}^{j''}_{w}=\theta'\ff{t}\ff{X}^{j''}_{w}$}, so $(c,\ff{t}\ff{X}^{j''}_{w},\ff{t}\pi_{w})(\alpha,\theta)=(c,\ff{t}\ff{X}^{j''}_{w},t\pi_{w})(\alpha',\
theta')$, which proves $F2$.
\end{proof}

\begin{theorem}\label{teo} Let $\widetilde{\ff{X}}$ be the 2-pro-object
$\cc{K}_\ff{X}^{op} \mr{\widetilde{\ff{X}}} \cc{C}$ defined by
\mbox{$\widetilde{\ff{X}}_{(i, j)} = \ff{X}^j_i$,}
$\widetilde{\ff{X}}_{(a,\ff{r}, \varphi)} = \ff{r}$, and
$\widetilde{\ff{X}}_{(\alpha, \theta)} = \theta$.
Then the following equation holds in $2$-$\cc{P}ro(\cc{C})$:
$$\widetilde{\ff{X}}=\Lim{j\in \cc{J}^{op}}{\ff{X}^j}$$
\end{theorem}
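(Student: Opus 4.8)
The plan is to verify directly the defining universal property of the pseudolimit (Definition \ref{colimits}), which characterizes it up to isomorphism. Thus I must equip $\widetilde{\ff{X}}$ with a pseudocone $\{\widetilde{\ff{X}} \mr{p^j} \ff{X}^j\}_{j \in \cc{J}}$ for the 2-functor $\ff{X}\colon \cc{J}^{op} \to \Pro{C}$, and show that for every $\ff{Z} \in \Pro{C}$ post-composition with it is an isomorphism of categories $\Pro{C}(\ff{Z}, \widetilde{\ff{X}}) \cong \ff{PC}_{\Pro{C}}(\ff{Z}, \ff{X})$ (the category of pseudocones over $\ff{X}$ with vertex $\ff{Z}$), natural in $\ff{Z}$. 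Since $\widetilde{\ff{X}}$ is a $2$-pro-object with index $\cc{K}_\ff{X}$ (which is $2$-filtered, just proved, so $\widetilde{\ff{X}}$ is legitimately an object of $\Pro{C}$, and a $2$-functor because $\widetilde{\ff{X}}$ merely retains the $\ff{r}$ and $\theta$ parts of the data while the composition law of Definition \ref{kequis} was designed so that these compose correctly), Proposition \ref{X=lim} and Remark \ref{XisLim} already tell us that $\widetilde{\ff{X}} = \Lim{(i,j) \in \cc{K}_\ff{X}^{op}}{\ff{X}^j_i}$ with projections $\pi_{(i,j)}\colon \widetilde{\ff{X}} \to \widetilde{\ff{X}}_{(i,j)} = \ff{X}^j_i$ and coherence $2$-cells $\pi_{(a,\ff{r},\varphi)}$, and that $\Pro{C}(\ff{Z}, \widetilde{\ff{X}})$ is canonically the category of pseudocones over this $\cc{K}_\ff{X}^{op}$-diagram. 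So the whole statement amounts to a Fubini-type comparison of the two pseudolimits $\Lim{(i,j) \in \cc{K}_\ff{X}^{op}}{\ff{X}^j_i}$ and $\Lim{j \in \cc{J}^{op}}{\bigl(\Lim{i \in \cc{I}_j^{op}}{\ff{X}^j_i}\bigr)}$, with $\cc{K}_\ff{X}$ playing the role of the total index $2$-category.

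To build the pseudocone, I would first note that for each $j$ the assignment $i \mapsto (i,j)$, $(u\colon i \to i') \mapsto (\mathrm{id}_j, \ff{X}^j_u, \pi^j_u)$, $(\alpha\colon u \Rightarrow u') \mapsto (\mathrm{id}_{\mathrm{id}_j}, \ff{X}^j_\alpha)$ is a $2$-functor $\cc{I}_j \to \cc{K}_\ff{X}$: this is exactly the assertion that $(\ff{X}^j_u, \pi^j_u)$ represents $\mathrm{id}_{\ff{X}^j}$ (the required invertible $2$-cell being $\pi^j_u$ itself) and that the pseudolimit axioms PC1--PC2 for the projection pseudocone of $\ff{X}^j$ are the compatibility conditions of Definition \ref{kequis}. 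Composing with $\widetilde{\ff{X}}$ recovers $\ff{X}^j\colon \cc{I}_j^{op} \to \cc{C}$, so restricting the $\pi_{(i,j)}$ and their coherence $2$-cells along $\cc{I}_j \hookrightarrow \cc{K}_\ff{X}$ gives a pseudocone over $\ff{X}^j$ with vertex $\widetilde{\ff{X}}$ which, by Remark \ref{XisLim}, is the pseudocone of a unique morphism $p^j\colon \widetilde{\ff{X}} \to \ff{X}^j$ with $\pi^j_i\, p^j = \pi_{(i,j)}$. For $a\colon j \to j'$ in $\cc{J}$ the transition isomorphism $p^j \cong \ff{X}^a\, p^{j'}$ is defined the same way: for each $i \in \cc{I}_j$, Proposition \ref{idrepresenta} produces $i' \in \cc{I}_{j'}$ and a representative $(\ff{r},\varphi)$ of $\ff{X}^a$, hence a $1$-cell $(a, \ff{r}, \varphi)\colon (i,j) \to (i',j')$ of $\cc{K}_\ff{X}$, and $\pi_{(a,\ff{r},\varphi)}$ composed with $\varphi$ supplies the $i$-th component; Lemma \ref{lema5} makes this independent of the chosen representative, and the pseudocone equations PC0--PC2 for $\{p^j\}$ reduce, through Remark \ref{XisLim}, to identities among the coherence $2$-cells of $\widetilde{\ff{X}}$ that hold by the composition laws (1)--(3) of Definition \ref{kequis}.

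For the universal property, fix $\ff{Z}$. By Remark \ref{XisLim} a morphism $\ff{Z} \to \widetilde{\ff{X}}$ is the same as a pseudocone $\{q_{(i,j)}\colon \ff{Z} \to \ff{X}^j_i\}$ over the $\cc{K}_\ff{X}^{op}$-diagram. Restricting to each $\cc{I}_j \hookrightarrow \cc{K}_\ff{X}$ and using $\ff{X}^j = \Lim{i}{\ff{X}^j_i}$ yields morphisms $q^j\colon \ff{Z} \to \ff{X}^j$; the coherence $2$-cells attached to the $1$-cells $(a,\ff{r},\varphi)$ with $a$ nontrivial assemble---again because $\ff{X}^j$ is a pseudolimit, with Lemma \ref{lema2} used to reconcile choices of representatives---into transition isomorphisms $q^j \cong \ff{X}^a q^{j'}$, and the PC-equations for $\{q^j\}$ follow from those of $\{q_{(i,j)}\}$ together with the definition of composition in $\cc{K}_\ff{X}$. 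Conversely, from a pseudocone $\{q^j\colon \ff{Z} \to \ff{X}^j\}$ over $\ff{X}$ I would set $q_{(i,j)} = \pi^j_i\, q^j$ and build the coherence $2$-cell for a $1$-cell $(a,\ff{r},\varphi)\colon (i,j) \to (i',j')$ out of $\varphi$ (which relates $\pi^j_i \ff{X}^a$ to $\ff{r}\,\pi^{j'}_{i'}$), the transition isomorphism $q^j \cong \ff{X}^a q^{j'}$, and the structure $2$-cells $\pi^j_u$ of $\ff{X}^j$; the axioms for the $2$-cells $(\alpha,\theta)$ of $\cc{K}_\ff{X}$ then come from PC2 of $\{q^j\}$ and the fact that $(\theta,\ff{r},\varphi,\ff{s},\psi)$ represents $\ff{X}^\alpha$. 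These two assignments are mutually inverse by the uniqueness clauses of Proposition \ref{X=lim}, Lemma \ref{lema4} and Lemma \ref{lema5}, and the same argument matches $2$-cells $\ff{Z} \to \widetilde{\ff{X}}$ with modifications of pseudocones; naturality in $\ff{Z}$ is automatic since every construction is by (pre)composition.

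The main obstacle is precisely the bookkeeping in the last paragraph: a pseudocone over $\ff{X}$ carries one invertible $2$-cell of $\Pro{C}$ per arrow of $\cc{J}$, whereas a pseudocone over the $\cc{K}_\ff{X}^{op}$-diagram carries coherence data indexed by the far more intricate $1$-cells $(a,\ff{r},\varphi)$ and must satisfy the $2$-cell axioms for every $(\alpha,\theta)$; one must check that the translation between the two is well defined (independent of all the chosen representatives), compatible with composition, and bijective, both on objects and on $2$-cells. Each of these points is what the representation lemmas \ref{idrepresenta}, \ref{lema1}, \ref{lema2}, \ref{lema4} and \ref{lema5} of the previous subsection were set up to handle, so the proof becomes a long but essentially routine assembly of them.
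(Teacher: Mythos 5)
Your proposal is correct and follows essentially the same route as the paper: both exploit that $\widetilde{\ff{X}}$, being a 2-pro-object, is the pseudolimit of its own $\cc{K}_\ff{X}$-diagram while each $\ff{X}^j$ is the pseudolimit of its $\cc{I}_j$-diagram, and then translate between $\cc{J}$-indexed pseudocones and $\cc{K}_\ff{X}$-indexed ones by restricting along $\cc{I}_j \hookrightarrow \cc{K}_\ff{X}$, using the representation lemmas to handle independence of chosen representatives (the paper's Claims 1 and 2). The only difference is organizational: you first make the limiting pseudocone $\{p^j\}$ on $\widetilde{\ff{X}}$ explicit and then check the bijection, whereas the paper directly builds the comparison functor $p$ from pseudocones over $\ff{X}$ to morphisms into $\widetilde{\ff{X}}$ and proves it bijective on objects and fully faithful.
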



\begin{proof}
Let $\ff{Z} \in$ 2-$\cc{P}ro(\cc{C})$, and
$\{\ff{Z}\mr{\ff{h}_j} \ff{X}^j\}_{j\in \cc{J}}$,
\mbox{$\{\ff{h}_j \Mr{\ff{h}_a} \ff{X}^a \ff{h}_{j'}\}_{j \mr{a}j' \in \cc{J}}$}
be a pseudocone for $\ff{X}$ with vertex $\ff{Z}$ (\ref{pseudocone}). Given
\mbox{$(i,j)\mr{(a,\ff{r},\varphi)} (i',j') \; \in \cc{K}_\ff{X}$}, the definitions
$\ff{h}_{(i,j)} = \pi^j_i \ff{h}_j$ and
$\ff{h}_{(a,\ff{r},\varphi)} = \varphi \ff{h}_{j'} \circ \pi^j_i \ff{h}_a$ determine a pseudocone for $c\widetilde{\ff{X}}$ with vertex $\ff{Z}$:

\noindent PC0: It's clear.

\noindent PC1: Given $(i,j) \mr{(a,\ff{r},\varphi)} (i',j') \mr{(b, \ff{s}, \psi)} (i'',j'')$,

$$\vcenter{ \xymatrix@C=-0.3pc{ \pi_i^j \deq &&&& \ff{h}_j \ar@{-}[dll] \ar@{-}[drr] \dc{\ff{h}_{ba}} \\
			       \pi_i^j \dl & \dc{\varphi} & \ff{X}^a \dr && \ff{X}^b \deq && \ff{h}_{j''} \deq \\
			       \ff{r} \deq && \pi_{i'}^{j'} \dl & \dc{\psi} & \ff{X}^b \dr && \ff{h}_{j''} \deq \\
			       \ff{r} && \ff{s} && \pi_{i''}^{j''} && \ff{h}_{j''} } }
\vcenter{ \xymatrix@C=-0pc{ \  = \  } }
\vcenter{ \xymatrix@C=-0.3pc{ \pi_i^j \deq &&&& \ff{h}_j \ar@{-}[dll] \ar@{-}[dr] \ar@{}[d]|{\ff{h}_a} \\
			     \pi_i^j \deq && \ff{X}^a \deq &&& \ff{h}_{j'} \op{\ff{h}_b} \\
			     \pi_i^j \dl & \dc{\varphi} & \ff{X}^a \dr && \ff{X}^b \deq && \ff{h}_{j''} \deq \\
			     \ff{r} \deq && \pi_{i'}^{j'} \dl & \dc{\psi} & \ff{X}^b \dr && \ff{h}_{j''} \deq \\
			     \ff{r} && \ff{s} && \pi_{i''}^{j''} && \ff{h}_{j''}   } }
\vcenter{ \xymatrix@C=-0pc{ \  = \  } }
\vcenter{ \xymatrix@C=-0.3pc{   \pi_i^j \deq &&&& \ff{h}_j \ar@{-}[dll] \ar@{-}[dr] \ar@{}[d]|{\ff{h}_a} \\
			     \pi_i^j \dl & \dc{\varphi} & \ff{X}^a \dr &&& \ff{h}_{j'} \deq \\
			     \ff{r} \deq && \pi_{i'}^{j'} \deq &&& \ff{h}_{j'} \op{\ff{h}_b} \\
			     \ff{r} \deq && \pi_{i'}^{j'} \dl & \dc{\psi} & \ff{X}^b \dr && \ff{h}_{j''} \deq \\
			     \ff{r} && \ff{s} && \pi_{i''}^{j''} && \ff{h}_{j''}   } }$$

\noindent where the first equality is due to the fact that $\ff{h}$ is a pseudocone.	

\noindent PC2: Given $(i,j) \cellrd{(a,\ff{r},\varphi)}{(\alpha,\theta)}{(b,\ff{s},\psi)} (i',j')$,

$$\vcenter{ \xymatrix@C=-0.3pc{ \pi_i^j \deq &&& \ff{h}_j \op{\ff{h}_a} \\
				\pi_i^j \dl & \dc{\varphi} & \ff{X}^a \dr && \ff{h}_{j'} \deq \\
				\ff{r} \dcell{\theta} && \pi_{i'}^{j'} \deq && \ff{h}_{j'} \deq \\
				\ff{s} && \pi_{i'}^{j'} && \ff{h}_{j'}    }}
\vcenter{ \xymatrix@C=0pc{ \  = \ }}
\vcenter{ \xymatrix@C=-0.3pc{  \pi_i^j \deq &&& \ff{h}_j \op{\ff{h}_a} \\
				\pi_i^j \deq && \ff{X}^a \dcell{\ff{X}^{\alpha}} && \ff{h}_{j'} \deq \\
				\pi_i^j \dl & \dc{\psi} & \ff{X}^b \dr && \ff{h}_{j'} \deq \\
				\ff{s} && \pi_{i'}^{j'} && \ff{h}_{j'}    }}
\vcenter{ \xymatrix@C=0pc{\  = \ }}
\vcenter{ \xymatrix@C=-0.3pc{  \pi_i^j \deq &&& \ff{h}_j \op{\ff{h}_b} \\
				\pi_i^j \dl & \dc{\psi} & \ff{X}^b \dr && \ff{h}_{j'} \deq \\
				\ff{s} && \pi_{i'}^{j'} && \ff{h}_{j'}    }} $$

\noindent where the first equality is due to the fact that $(\theta,\ff{r},\varphi, \ff{s},\psi)$ represents $\ff{X}^\alpha$ and the second one is valid because $\ff{h}$ is a pseudocone.
				
It is straightforward to check that this extends to a functor, that we denote $p$ (for the isomorphism below see \ref{XisLim}):
$$
\ff{PC}_{2\hbox{-}\cc{P}ro(\cc{C})}(\ff{Z},\ff{X}) \mr{p} \ff{PC}_{2\hbox{-}\cc{P}ro(\cc{C})}(\ff{Z},c\widetilde{\ff{X}})
 \, \cong \,2\hbox{-} \cc{P}ro(\cc{C})(\ff{Z},\, \ff{\widetilde{X}})
$$

The theorem follows if $p$ is an isomorphism. In the sequel we prove that, in fact, $p$ is an isomorphism.
Let $\ff{Z} \in$ 2-$\cc{P}ro(\cc{C})$, and
$$
    {
     \{\ff{h}_{(i,j)} \Mr{\ff{h}_{(a,\ff{r},\varphi)}}
     \ff{\widetilde{X}}_{(a,\ff{r},\varphi)} \ff{h}_{(i',j')}
     = \ff{r} \, \ff{h}_{(i,j)}\}_
     {(i,j) \mr{(a,\ff{r},\varphi)} (i',j') \in \cc{K}_\ff{X}}, \hspace{2ex}
    }
\{\ff{Z}\mr{\ff{h}_{(i,j)}} \ff{X}^j_i\}_{(i,j) \in \cc{K}_\ff{X}}
$$
be a pseudocone for $c\ff{\widetilde{X}}$ with vertex $\ff{Z}$ (\ref{pseudocone}).

\vspace{1ex}

\emph{1. $p$ is bijective on objects}:

Check that for each $j \in \cc{J}$,
$\{\ff{Z}\mr{\ff{h}_{(i,j)}}\ff{X}_i^j \}_{i\in \cc{I}_j}$
together  with
\mbox{$\{\ff{h}_u = \ff{h}_{(j,\ff{X}_{u}^j,\pi_u^j)}: \ff{h}_{(i,j)} \Mr{} \ff{X}_u^j \ff{h}_{(i',j')}\}_{i\mr{u} i'\in \cc{I}_j}$} is a pseudocone for $\ff{X}^j$.
Then, since $\ff{X}^j \mr{\pi^j_i} \ff{X}^j_i$ is a pseudolimit pseudocone, it follows that there exists a unique $\ff{Z} \mr{\ff{h}_j} \ff{X}^j$ such that
\begin{equation}\label{*1}
 \forall i\in \cc{I}_j \;\; \pi_i^j\ff{h}_j=\ff{h}_{(i,j)} \;\;and\;\; \forall \; i \mr{u} i'\in \cc{I}_j \ \;\; \pi_u^j\ff{h}_j=\ff{h}_{u}.
\end{equation}
It only remains to define the 2-cells of the pseudocone structure. That is, for each
$j \mr{a} j' \in \cc{J}$, we need invertible 2-cells
$\ff{h}_j \Mr{\ff{h}_a} \ff{h}_{j'}\ff{X}^a$,
 such that $\{\ff{h}_j\}_{j\in \cc{J}}$ together with
 $\{\ff{h}_a\}_{j \mr{a} j' \in \cc{J}}$ form a pseudocone for $\ff{X}$ with vertex $\ff{Z}$.

Consider the pseudocone
 $\{\ff{X}^j \mr{\pi^j_i} \ff{X}^j_i \}_{i \in \cc{I}_j}$. Then the composites $\pi^j_i \ff{h}_j$, $\pi^j_i \ff{X}^a \ff{h}_{j'}$, determine two pseudocones
 $\{\ff{Z} \mrpair{\pi_i^j\ff{h}_j}{\pi_i^j\ff{X}^a \ff{h}_{j'}}
                                           \ff{X}_i^j\}_{i\in \cc{I}_j}$
for $\ff{X}^j$ with \mbox{vertex $\ff{Z}$.}

{\bf Claim 1}
 \emph
   { Let $(\ff{r},\varphi)$ and $(\ff{s},\psi)$ be two pairs
     representing $\ff{X}^a$ as follows:
    $$
     \xymatrix@C=0.5pc@R=0.5pc
           {
            \ff{X}^{j'} \ar[rr]^{\ff{X}^a}  \ar[dd]_{\pi^{j'}_{i'}}
            & & \ff{X}^j \ar[dd]^{\pi^j_i}
           \\
            & \Downarrow \cong \varphi
            &
           \\ \ff{X}^{j'}_{i'} \ar[rr]_{\ff{r}}
            & & \ff{X}^j_i
           }
     \hspace{5ex}
     \xymatrix@C=0.5pc@R=0.5pc
           {
            \ff{X}^{j'} \ar[rr]^{\ff{X}^a}  \ar[dd]_{\pi^{j'}_{i''}}
            & & \ff{X}^j \ar[dd]^{\pi^j_i}
           \\
            & \Downarrow \cong \psi
            &
           \\ \ff{X}^{j'}_{i''} \ar[rr]_{\ff{s}}
            & & \ff{X}^j_i
           }
$$
Then,
    \mbox{$\varphi^{-1}\ff{h}_{j'} \circ \ff{h}_{(a,\ff{r},\varphi)} =
    \psi^{-1}\ff{h}_{j'} \circ \ff{h}_{(a,\ff{s},\psi)}$}
$$\vcenter{\xymatrix@C=-0.5pc{  & \pi_i^j \ar@{-}[dl] && \ar@{}[dl]|{\ff{h}_{(a,\ff{r},\varphi)}} & \ff{h}_j \ar@{-}[dr] \\                               
			      \ff{r} \dl & \dc{\varphi^{-1}} & \pi_{i'}^{j'} \dr &&& \ff{h}_{j'}  \deq \\
			      \pi_i^j & & \ff{X}^a &&& \ff{h}_{j'} } }
\vcenter{\xymatrix@C=0pc{\ = \ \ } }
\vcenter{\xymatrix@C=-0.5pc{  & \pi_i^j \ar@{-}[dl] && \ar@{}[dl]|{\ff{h}_{(a,\ff{s},\psi)}} & \ff{h}_j \ar@{-}[dr] \\                               
			      \ff{s} \dl & \dc{\psi^{-1}} & \pi_{i''}^{j'} \dr &&& \ff{h}_{j'}  \deq \\
			      \pi_i^j & & \ff{X}^a &&& \ff{h}_{j'} } }
$$    
    }
\hfill (proof below).
\vspace{1ex}

{\bf Claim 2}
 \emph
      {
       For each $i \in \cc{I}_j$, let $(\ff{r}, \varphi)$ be a pair representing
       $\ff{X}^a$, and set
       \mbox{$\rho_i=\varphi^{-1}\ff{h}_{j'} \circ \ff{h}_{(a,\ff{r},
                                                             \varphi)}$.}
       Then, $\{\rho_i\}_{i \in \cc{I}_j}$ determines an isomorphism of
       pseudocones
       $\{\ff{Z} \cellrd{\pi_i^j\ff{h}_j}{\rho_i}
        {\pi_i^j\ff{X}^a \ff{h}_{j'}} \ff{X}^j_i\}_{i\in \cc{I}_j}$
      }
\hfill\mbox{(proof below)}.

Since $\ff{X}^j \mr{\pi^j_i} \ff{X}^j_i$ is a pseudolimit pseudocone, the functor
\mbox{$2$-$\cc{P}ro(\cc{C})(\ff{Z}, \ff{X}^j) \mr{(\pi^j)_*} \ff{PC}_{2\hbox{-}\cc{P}ro(\cc{C})}(\ff{Z}, \ff{X}^j)$}
is an isomorphism of categories. Then, from Claim 2 it follows that there are invertible 2-cells
\mbox{$\ff{Z} \cellrd{\ff{h}_j}{h_a}{\ff{X}^a\ff{h}_{j'}} \ff{X}^j \in 2$-$\cc{P}ro(\cc{C})$} such that $\rho_i=\pi_i^j \ff{h}_a \ \forall \ i\in \cc{I}_j$. Then $\{\ff{Z}\mr{\ff{h}_j} \ff{X}^j\}_{j\in \cc{J}}$ with $\{\ff{h}_j\Mr{\ff{h}_a} \ff{h}_{j'}\ff{X}^a\}_{j\mr{a} j' \in \cc{J}}$ is a pseudocone over $\ff{X}$: 

\noindent PC0: By Claim 1, in Claim 2 we can take $\ff{r}=id$ and $\varphi=id$, so $\rho_i=id$ and therefore $\ff{h}_{id}=id$.

\noindent PC1: Given $j\mr{a}j'\mr{b}j''\in \cc{J}$ and $i\in \cc{I}_j$, by Claim 1, in Claim 2 we can take $(\ff{r},id)$ representing $\ff{X}^a$, $\ff{X}_{i'}^{j'}\mr{\ff{r}}\ff{X}_i^j$, $(\ff{s},id)$ representing $\ff{X}^b$, $\ff{X}_{i''}^{j''}\mr{\ff{s}}\ff{X}_{i'}^{j'}$ and $(\ff{r}\ff{s},id)$ representing $\ff{X}^{ba}$. Then

$$\vcenter{\xymatrix@C=-0.5pc{  \pi_i^j \deq &&&& \ff{h}_j \ar@{-}[dll] \ar@{-}[dr] \dc{\ff{h}_a} \\
			       \pi_i^j \deq && \ff{X}^a \deq &&& \ff{h}_{j'} \op{\ff{h}_b} \\
			       \pi_i^j && \ff{X}^a && \ff{X}^b && \ff{h}_{j''}   } }
\vcenter{\xymatrix@C=0pc{\ = \ \ } }
\vcenter{\xymatrix@C=-0.5pc{  & \pi_i^j \ar@{-}[dl] && \ar@{}[dl]|{\ff{h}_{(a,\ff{r},id)}} & \ff{h}_j \ar@{-}[dr] \\                               
			      \ff{r} \dl & \dc{=} & \pi_{i'}^{j'} \dr &&& \ff{h}_{j'}  \deq \\
			      \pi_i^j \deq & & \ff{X}^a \deq &&& \ff{h}_{j'} \op{\ff{h}_b} \\
			      \pi_i^j && \ff{X}^a && \ff{X}^b && \ff{h}_{j''} } }
\vcenter{\xymatrix@C=0pc{\ = \ \ } }
\vcenter{\xymatrix@C=-0.5pc{  & \pi_i^j \ar@{-}[dl] && \ar@{}[dl]|{\ff{h}_{(a,\ff{r},id)}} & \ff{h}_j \ar@{-}[dr] \\                               
			      \ff{r} \deq & & \pi_{i'}^{j'} \deq && &\ff{h}_{j'}  \op{\ff{h}_b} \\
			      \ff{r} \dl & \dc{=} & \pi_{i'}^{j'} \dr && \ff{X}^b \deq && \ff{h}_{j''}  \deq \\
			      \pi_i^j && \ff{X}^a && \ff{X}^b && \ff{h}_{j''} } }
\vcenter{\xymatrix@C=0pc{\   = \ \   } }$$			      
$$
\vcenter{\xymatrix@C=-0.5pc{ & \pi_i^j \ar@{-}[dl] && \dc{\ff{h}_{(a,\ff{r},id)}} & \ff{h}_j \ar@{-}[dr] \\
			      \ff{r} \deq &&& \pi_{i'}^{j'} \ar@{-}[dl] & \dc{\ff{h}_{(b,\ff{s},id)}} & \ff{h}_{j'} \ar@{-}[dr] \\
			      \ff{r} \deq && \ff{s} \dl &\dc{=}& \pi_{i''}^{j''} \dr&& \ff{h}_{j''} \deq \\
			      \ff{r} \dl & \dc{=} & \pi_{i'}^{j'} \dr && \ff{X}^b \deq && \ff{h}_{j''}  \deq \\
			      \pi_i^j && \ff{X}^a && \ff{X}^b && \ff{h}_{j''}
			      } }
\vcenter{\xymatrix@C=0pc{\   = \ \ } }
\vcenter{\xymatrix@C=-0.5pc{ & \pi_i^j \ar@{-}[dl] && \dc{\quad \ff{h}_{(ba,\ff{rs},id)}} && \ff{h}_j \ar@{-}[dr] \\
			     \ff{r} \deq && \ff{s} \dl &\dc{=}& \pi_{i''}^{j''} \dr&& \ff{h}_{j''} \deq \\
			      \ff{r} \dl & \dc{=} & \pi_{i'}^{j'} \dr && \ff{X}^b \deq && \ff{h}_{j''}  \deq \\
			      \pi_i^j && \ff{X}^a && \ff{X}^b && \ff{h}_{j''} } }
\vcenter{\xymatrix@C=0pc{\   = \ \ } }
\vcenter{\xymatrix@C=-0.5pc{ \pi_i^j \deq &&&& \ff{h}_j \ar@{-}[dll] \dc{\ff{h}_{ba}} \ar@{-}[drr] \\
			      \pi_i^j && \ff{X}^a && \ff{X}^b && \ff{h}_{j''} } } $$

\noindent where the first, the third and the last equalities hold by definition of $\ff{h}_{(a,\ff{r},id)}$, $\ff{h}_{(b,\ff{s},id)}$ and $\ff{h}_{(ba,\ff{rs},id)}$ respectively; and the fourth equality is due to the fact that $\ff{h}$ is a pseudocone.

Since we checked this for any $i\in \cc{I}_j$, it follows:

$$\vcenter{\xymatrix@C=-0.5pc{&& \ff{h}_j \ar@{-}[dll] \ar@{-}[dr] \dc{\ff{h}_a} \\
			       \ff{X}^a \deq &&& \ff{h}_{j'} \op{\ff{h}_b} \\
			       \ff{X}^a && \ff{X}^b && \ff{h}_{j''}   } }
\vcenter{\xymatrix@C=0pc{ = } }
\vcenter{\xymatrix@C=-0.5pc{&& \ff{h}_j \ar@{-}[dll] \dc{\ff{h}_{ba}} \ar@{-}[drr] \\
			     \ff{X}^a && \ff{X}^b && \ff{h}_{j''} } } $$

\noindent PC2: Given $j\cellrd{a}{\alpha}{b}j'\in \cc{J}$ and $i\in \cc{I}_j$, there is $\ff{X}_{i'}^{j'}\cellrd{\ff{r}}{\theta}{\ff{s}}\ff{X}_i^j$ and appropriate invertible 2-cells $\varphi$, $\psi$ such that $(\theta,\ff{r},\varphi,\ff{s},\psi)$ represents $\ff{X}^{\alpha}$. By Claim 1, in Claim 2 we can take those representatives of $\ff{X}^a$ and $\ff{X}^b$ and then:

\vspace{-2ex}

$$\vcenter{\xymatrix@C=-0.3pc@R=3ex{ \pi_i^j \deq &&& \ff{h}_j \op{\ff{h}_b}  \\
				\pi_i^j && \ff{X}^b && \ff{h}_{j'}    }}
\vcenter{\xymatrix@C=0pc@R=3ex{\ \  = \ \ } }
\vcenter{\xymatrix@C=-0.3pc@R=3ex{   & \pi_i^j \ar@{-}[dl] & \dc{\ff{h}_{(b,\ff{s},\psi)}} &  \ff{h}_j \ar@{-}[dr] \\
			    \ff{s} \dl & \dc{\psi^{-1}} & \pi_{i'}^{j'} \dr && \ff{h}_{j'} \deq \\
			    \pi_i^j  && \ff{X}^b && \ff{h}_{j'}    }}
\vcenter{\xymatrix@C=0pc@R=3ex{\ \  = \ \ } }     
\vcenter{\xymatrix@C=-0.3pc@R=3ex{  & \pi_i^j \ar@{-}[dl] & \dc{\ff{h}_{(a,\ff{r},\varphi)}} &  \ff{h}_j \ar@{-}[dr] \\
			    \ff{r} \dcell{\theta} && \pi_{i'}^{j'} \deq && \ff{h}_{j'} \deq \\ 
			    \ff{s} \dl & \dc{\psi^{-1}} & \pi_{i'}^{j'} \dr && \ff{h}_{j'} \deq \\
			    \pi_i^j  && \ff{X}^b && \ff{h}_{j'}    }}
\vcenter{\xymatrix@C=0pc@R=3ex{\ \  = \ \ } }
$$

\vspace{-3ex}

$$\vcenter{\xymatrix@C=-0.3pc@R=3ex{  & \pi_i^j \ar@{-}[dl] & \dc{\ff{h}_{(a,\ff{r},\varphi)}} &  \ff{h}_j \ar@{-}[dr] \\
			    \ff{r} \dl & \dc{\varphi^{-1}} & \pi_{i'}^{j'} \dr && \ff{h}_{j'} \deq \\ 
			    \pi_i^j \deq && \ff{X}^a \dcell{\ff{X}^{\alpha}} && \ff{h}_{j'} \deq \\ 
			    \pi_i^j  && \ff{X}^b && \ff{h}_{j'}    }}
\vcenter{\xymatrix@C=0pc@R=3ex{\ \  = \ \ } }     
\vcenter{\xymatrix@C=-0.3pc@R=3ex{ \pi_i^j \deq &&& \ff{h}_j \op{\ff{h}_a} \\
			    \pi_i^j \deq && \ff{X}^a \dcell{\ff{X}^{\alpha}} && \ff{h}_{j'} \deq \\ 
			    \pi_i^j  && \ff{X}^b && \ff{h}_{j'}    }}  
$$   
\noindent where the first and last equalities hold by definition of $\ff{h}_{(a,\ff{r},\varphi)}$ and $\ff{h}_{(b,\ff{s},\psi)}$ respectively, the second equality is due to the fact that $\ff{h}$ is a pseudocone and the third one is valid because $(\theta,\ff{r},\varphi,\ff{s},\psi)$ represents $\ff{X}^\alpha$. 
			    
Since we checked this for any $i\in \cc{I}_j$, it follows:
$$
\vcenter{\xymatrix@C=-0.3pc@R=3ex{ && \ff{h}_j \op{\ff{h}_b}  \\
				& \ff{X}^b && \ff{h}_{j'}    }}
\vcenter{\xymatrix@C=0pc@R=3ex{\ \  = \ \ } }
\vcenter{\xymatrix@C=-0.3pc@R=3ex{ &\ff{h}_j  \op{\ff{h}_a} & \\
			     \ff{X}^a \dcell{\ff{X}^{\alpha}} && \ff{h}_{j'} \deq \\ 
			      \ff{X}^b && \ff{h}_{j'}    }}  
$$

\emph{2. $p$ is full and faithful}:
Let $\{\ff{Z} \cellrd{\pi_i^j \ff{h}_j}{\rho_{(i,j)}}{\pi_i^j \ff{m}_j} \ff{X}_i^j \}_{(i,j)\in \cc{K}_\ff{X}}$
be a morphism of pseudocones for $\tilde{\ff{X}}$.
It is easy to check that for each $j\in \cc{J}$, $\{\ff{Z}  \cellrd{\pi_i^j \ff{h}_j}{\rho_{(i,j)}}{\pi_i^j \ff{m}_j} \ff{X}_i^j \}_{i\in \cc{I}_j}$ \hfill is a morphism of pseudocones for $\ff{X}^j$.  Then arguing 

\vspace{-4ex}

\noindent as above, there exists a unique morphism $\ff{Z} \cellrd{\ff{h}_j}{\rho_j}{\ff{m}_j} \ff{X}^j \in 2$-$\cc{P}ro(\cc{C})$ such that $\forall \ i\in \cc{I}_j$, $\pi_i^j\rho_j=\rho_{(i,j)}$. It only remains to prove that $\{\rho_j\}_{j\in \cc{J}}$ is a morphism of pseudocones: 

PCM: Given $j\mr{a}j'\in \cc{J}$ and $i\in \cc{I}_j$, by Claim 1, in Claim 2 we can take $(\ff{r},id)$ representing $\ff{X}^a$, $\ff{X}_{i'}^{j'}\mr{\ff{r}}\ff{X}_{i}^{j}$ and then:

$$
\vcenter
  {
   \xymatrix@C=-0.3pc@R=3ex
      {
       \pi_i^j \deq &&& \ff{h}_j \dcell{\rho_j} 
      \\
	   \pi_i^j \deq &&& \ff{m}_j \op{\ff{m}_a} 
	  \\
	   \pi_i^j && \ff{X}^a && \ff{m}_{j'} 
	  }
  }
\vcenter
  {
   \xymatrix@C=0pc@R=3ex
     {
      \ \  = \ \ 
     } 
  }    
\vcenter
  {
   \xymatrix@C=-0.3pc@R=3ex
     {
        \pi_i^j \dl && \ar@{}[dl]|{\quad \rho_{(i,j)}} 
                                            & \ff{h}_j \dr 
      \\
	    \pi_i^j \deq &&&  \ff{m}_j \op{\ff{m}_a}  
	  \\
	    \pi_i^j && \ff{X}^a && \ff{m}_{j'} 
	  }
  }
\vcenter
  {
   \xymatrix@C=0pc@R=3ex
     {
      \ \  = \ \ 
     } 
  }   
\vcenter
  {
   \xymatrix@C=-0.3pc@R=3ex
     { \pi_i^j \deq &&& \ff{h}_j \op{\ff{h}_a} 
      \\
	   \pi_i^j \dl &\dc{=}& \ff{X}^a \dr&& \ff{h}_{j'} \deq
	  \\
	   \ff{r} \deq && \pi_{i'}^{j'} \dl 
	                & \dc{\rho_{(i',j')}} & \ff{h}_{j'} \dr 
	  \\
	   \ff{r} \dl & \dc{=} & \pi_{i'}^{j'} \dr 
	                                    && \ff{m}_{j'} \deq 
	  \\
	    \pi_i^j \deq & \! \comw{X_a} \! & \ff{X}^a 
	                                    && \ff{m}_{j'} 
	  \\
	    \pi_i^j \deq &&& \cl{\ \! \ff{m}_a^{-1}} \ff{m}_j 
	                                         \op{\ff{m}_a} 
	  \\
	    \pi_i^j && \ff{X}^a && \ff{m}_j 
	  }
  }
\vcenter
  {
   \xymatrix@C=0pc@R=3ex
     {
      \ \  = \ \ 
     } 
  }   
$$

$$\vcenter{\xymatrix@C=0pc@R=3ex{\ \  = \ \ } }   
\vcenter{\xymatrix@C=-0.3pc@R=3ex{ \pi_i^j \deq &&& \ff{h}_j \op{\ff{h}_a} \\
			      \pi_i^j \dl & \dc{=} & \ff{X}^a \dr && \ff{h}_{j'}  \deq\\
			      \ff{r} \deq && \pi_{i'}^{j'} \dl & \dc{\rho_{(i',j')}} & \ff{h}_{j'} \dr \\
			      \ff{r} \dl & \dc{=} & \pi_{i'}^{j'} \dr && \ff{m}_{j'} \deq \\
			      \pi_i^j & \! \comw{X_a} \! & \ff{X}^a && \ff{m}_{j'} }}
\vcenter{\xymatrix@C=0pc@R=3ex{\ \  = \ \ }}   
\vcenter{\xymatrix@C=-0.3pc@R=3ex{ \pi_i^j \deq &&& \ff{h}_j \op{\ff{h}_a} \\
                             \pi_i^j \dl &\dc{=}& \ff{X}^a \dr&& \ff{h}_{j'}  \deq\\  
			      \ff{r} \deq && \pi_{i'}^{j'} \deq && \ff{h}_{j'} \dcell{\rho_{j'}} \\ 
			      \ff{r} \dl& \dc{=}& \pi_{i'}^{j'} \dr&& \ff{m}_{j'} \deq \\
			      \pi_i^j & \! \comw{X_a} \! & \ff{X}^a && \ff{m}_{j'}}}
\vcenter{\xymatrix@C=0pc@R=3ex{\ \  = \ \ }}   
\vcenter{\xymatrix@C=-0.3pc@R=3ex{ \pi_i^j \deq &&& \ff{h}_j \op{\ff{h}_a} \\
                             \pi_i^j \deq && \ff{X}^a \deq && \ff{h}_{j'}  \dcell{\rho_{j'}} \\  			     
			      \pi_i^j  & & \ff{X}^a  && \ff{m}_{j'} }}$$

\noindent where the second equality is valid because $\rho$ is a morphism of pseudocones.
			      
Since we checked this for any $i\in \cc{I}_j$, it follows:                                   

$$\vcenter{\xymatrix@C=-0.3pc@R=3ex{ & \ff{h}_j \dcell{\rho_j} \\
				& \ff{m}_j \op{\ff{m}_a} \\
				 \ff{X}^a && \ff{m}_{j'} }}
\vcenter{\xymatrix@C=0pc{\ \  = \ \ } }  
\vcenter{\xymatrix@C=-0.3pc@R=3ex{ & \ff{h}_j \op{\ff{h}_a} \\
			       \ff{X}^a \deq && \ff{h}_{j'} \dcell{\rho_{j'}}  \\
			       \ff{X}^a && \ff{m}_{j'} }}$$

\end{proof}

\emph{Proof of Claim 1}. 
First assume that $i'=i''$ and $(\ff{r},\varphi)$, $(\ff{s},\psi)$ are related by a \mbox{2-cell} 
$(i,j)\cellrd{(a,\ff{r},\varphi)}{(a,\theta)}{(a,\ff{s},\psi)}(i',j')$ in $\cc{K}_{\ff{X}}$. 
Then:

\vspace{-2ex}

$$\vcenter{\xymatrix@C=-0.5pc{  & \pi_i^j \ar@{-}[dl] && \ar@{}[dl]|{\ff{h}_{(a,\ff{s},\psi)}} & \ff{h}_j \ar@{-}[dr] \\                               
			      \ff{s} \dl & \dc{\psi^{-1}} & \pi_{i''}^{j'} \dr &&& \ff{h}_{j'}  \deq \\
			      \pi_i^j & & \ff{X}^a &&& \ff{h}_{j'} } }
\vcenter{\xymatrix@C=0pc{\ \  = \ \ } } 
\vcenter{\xymatrix@C=-0.5pc{  & \pi_i^j \ar@{-}[dl] && \ar@{}[dl]|{\ff{h}_{(a,\ff{r},\varphi)}} & \ff{h}_j \ar@{-}[dr] \\                               
			      \ff{r} \dcell{\theta} & & \pi_{i''}^{j'} \deq &&& \ff{h}_{j'}  \deq \\
			      \ff{s} \dl & \dc{\psi^{-1}} & \pi_{i''}^{j'} \dr & && \ff{h}_{j'} \deq \\ 
			      \pi_i^j & & \ff{X}^a &&& \ff{h}_{j'} } }
\vcenter{\xymatrix@C=0pc{\ \  = \ \ } } 
\vcenter{\xymatrix@C=-0.5pc{  & \pi_i^j \ar@{-}[dl] && \ar@{}[dl]|{\ff{h}_{(a,\ff{r},\varphi)}} & \ff{h}_j \ar@{-}[dr] \\                               
			      \ff{r} \dl & \dc{\varphi^{-1}} & \pi_{i''}^{j'} \dr &&& \ff{h}_{j'}  \deq \\
			      \pi_i^j & & \ff{X}^a &&& \ff{h}_{j'} } }			      
$$
\noindent where the first equality holds because $\ff{h}$ is a pseudocone, and the second because $\theta$ represents $id$ (the identity of $\ff{X}^a$).

The general case reduces to this one as follows:

\noindent We have 
$
\vcenter
    {
     \xymatrix@R=.01pc
        {
         & (i',j') 
         \\
         (i,j) \ar[ru]^{(a,\ff{r},\varphi)}  \ar[dr]_{(a,\ff{s},\psi)} 
         \\
		 & (i'',j') 
		 }
    }
$.
Take 
$
\vcenter
    {
     \xymatrix@R=.01pc
         {
            i'\ar[rd]^u 
         \\
 		  & k 
		 \\
		    i'' \ar[ru]_v
		  }
    }
$
in 	$\cc{I}_j$. This yields a \mbox{particular} instance of lemma \ref{lema2}:
$$
\xymatrix@C=9ex@R=7ex
        {
         \ff{X}^{j'}   
             \ar@<1.6ex>[r]^{\ff{X}^a}
             \ar@{}@<-1.3ex>[r]^{\!\! {id\,} \, \!\Downarrow}
             \ar@<-1.1ex>[r]_{\ff{X}^a}
             \ar[d]^{\pi_{k}}
         & \ff{X}^j 
             \ar[d]^{\pi_i}
        \\
         \ff{X}_{k}^{j'}
             \ar@<1.6ex>[r]^{\ff{r}\ff{X}_u^{j'}}
             \ar@{}@<-1.3ex>[r]^{}
             \ar@<-1.1ex>[r]_{\ff{s}\ff{X}_v^{j'}}
         & \ff{X}_i^j
        }
$$
with $(\ff{r}\ff{X}_u^{j'},(\ff{r}\pi_u^{j'})\circ \varphi)$ and 
$(\ff{s}\ff{X}_v^{j'},(\ff{s}\pi_v^{j'})\circ \psi)$ 
both representing $\ff{X}^a$. 
It \mbox{follows} there exists 
$k\stackrel{w}{\rightarrow} k'$ and 
$\ff{X}_{k'}^{j'}\cellrd{\ff{r}\ff{X}_u^{j'}\ff{X}_w^{j'}}{\theta}{\ff{s}\ff{X}_v^{j'}\ff{X}_w^{j'}} \ff{X}_i^j$ such that 
\mbox{$(\theta,\; \ff{r}\ff{X}_u^{j'}\ff{X}_w^{j'},\; \ff{r}\ff{X}_u^{j'}\pi_w^{j'}\circ \ff{r}\pi_u^{j'}\circ \varphi,\; \ff{s}\ff{X}_v^{j'}\ff{X}_w^{j'},\;\ff{s}\ff{X}_v^{j'}\pi_w^{j'}\circ \ff{s}\pi_v^{j'}\circ \psi)$} 
represents  $id$ (the identity of $\ff{X}^a$).

\noindent Considering $(\ff{r}\ff{X}_u^{j'}\ff{X}_w^{j'},\;\ff{r}\ff{X}_u^{j'}\pi_w^{j'}\circ \ff{r}\pi_u^{j'}\circ \varphi)$ and 
\mbox{$(\ff{s}\ff{X}_v^{j'}\ff{X}_w^{j'},\;\ff{s}\ff{X}_v^{j'}\pi_w^{j'}\circ \ff{s}\pi_v^{j'}\circ \psi)$}
both representing $\ff{X}^a$, we have a situation that corresponds to the previous case. 
Thus:

\vspace{1ex} 
 
$$\vcenter{\xymatrix@C=-0.5pc{  & \pi_i^j \ar@{-}[dl] &&& \ar@{}[dl]|{\ff{h}_{(a,\ff{r},\varphi)}} && \ff{h}_j \ar@{-}[dr] \\                               
			      \ff{r} \deq &&& \pi_{i'}^{j'} \ar@{-}[dl] && \ar@{}[d]|{\ff{h}_{(j',\ff{X}^{j'}_u\ff{X}^{j'}_w,\ff{X}^{j'}_u \pi^{j'}_w)}} && \ff{h}_{j'} \ar@{-}[dr]   \\
			      \ff{r} \deq && \ff{X}_u^{j'} \deq && \ff{X}_w^{j'} \dl && \pi_{k'}^{j'} \dr && \ff{h}_{j'} \deq \\
			      \ff{r} \deq && \ff{X}_u^{j'} \ar@{-}[dr] &&& \pi_k^{j'} \ar@{-}[dl] \ar@{}[u]|{{\pi_w^{j'}}^{-1}} &&&  \ff{h}_{j'} \deq \\
			      \ff{r} \ar@{-}[dr] &&& \ar@{}[r]|{\displaystyle \pi_{i'}^{j'}} \ar@{}[ur]|{{\pi_u^{j'}}^{-1} } \dr \comw{X} & \comw{|} &&&&  \ff{h}_{j'} \deq  \\
			      & \pi_i^j & \ar@{}[u]|{\varphi^{-1}} &\ff{X}^a &&&&& \ff{h}_{j'}   }}
\vcenter{\xymatrix@C=0pc{\ \  = \ \ } } 
\vcenter{\xymatrix@C=-0.5pc{  & \pi_i^j \ar@{-}[dl] &&& \ar@{}[dl]|{\ff{h}_{(a,\ff{s},\psi)}} && \ff{h}_j \ar@{-}[dr] \\                               
			      \ff{s} \deq &&& \pi_{i''}^{j'} \ar@{-}[dl] && \ar@{}[d]|{\ff{h}_{(j',\ff{X}^{j'}_v\ff{X}^{j'}_w,\ff{X}^{j'}_v \pi^{j'}_w)}} && \ff{h}_{j'} \ar@{-}[dr]   \\
			      \ff{s} \deq && \ff{X}_v^{j'} \deq && \ff{X}_w^{j'} \dl && \pi_{k'}^{j'} \dr && \ff{h}_{j'} \deq \\
			      \ff{s} \deq && \ff{X}_v^{j'} \ar@{-}[dr] &&& \pi_k^{j'} \ar@{-}[dl] \ar@{}[u]|{{\pi_w^{j'}}^{-1}} &&&  \ff{h}_{j'} \deq \\
			      \ff{s} \ar@{-}[dr] &&& \ar@{}[r]|{\displaystyle \pi_{i''}^{j'}} \ar@{}[ur]|{{\pi_v^{j'}}^{-1} } \dr \comw{X} & \comw{|} &&&&  \ff{h}_{j'} \deq  \\
			      & \pi_i^j & \ar@{}[u]|{\psi^{-1}} &\ff{X}^a &&&&& \ff{h}_{j'}   }}$$

\vspace{1ex}

From \ref{*1} and the fact that $\ff{X}^j$ is a pseudocone, it follows that  
$(\ff{r}(\pi_u^{j'})^{-1}\circ \ff{r}\ff{X}_u^{j'}(\pi_w^{j'})^{-1})\ff{h}_{j'}\circ \ff{r}\ff{h}_{(j',\ff{X}_u^{j'}\ff{X}_w^{j'},\ff{X}_
u^{j'}\pi_w^{j'})}$ and 
$(\ff{s}(\pi_v^{j'})^{-1}\circ \ff{s}\ff{X}_v^{j'}(\pi_w^{j'})^{-1})\ff{h}_{j'}\circ \ff{s}\ff{h}_{(j',\ff{X}_v^{j'}\ff{X}_w^{j'},\ff{X}_v^{j'}\pi_w^{j'})}$ are identities. So 
\mbox{$\varphi^{-1}\ff{h}_{j'}\circ \ff{h}_{(a,\ff{r},\varphi)}=\psi^{-1}\ff{h}_{j'}\circ \ff{h}_{(a,\ff{s},\psi)}$} as we wanted to prove.
\qed

\vspace{1ex}

\emph{Proof of Claim 2}.
Given any $i \mr{u} k \in \cc{I}_j$, we have to check the $PCM$ equation in \ref{pseudocone}. Given the pair $(\ff{s}, \psi)$ used to define $\rho_k$, it is possible to choose a pair $(\ff{r}, \varphi)$ to define $\rho_i$ in such a way that the equation holds. This arguing is justified by Claim 1.
\qed

\begin{corollary}\label{cerradaporpseudolimites}
$2$-$\cc{P}ro(\cc{C})$ is closed under small 2-cofiltered \mbox{pseudolimits.} Considering the equivalence in \ref{eqconloscolimderep}, it follows that the inclusion
\mbox{$\cc{H}om(\cc{C},\, \cc{C}at)_{fc} \;\subset\; \cc{H}om(\cc{C},\, \cc{C}at)$} is closed under small 2-filtered \mbox{pseudocolimits} \cqd
\end{corollary}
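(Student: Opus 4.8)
The plan is that the substantial work is already contained in Theorem~\ref{teo}, and the statement is obtained from it by transporting along the contravariant equivalence of Proposition~\ref{eqconloscolimderep}; the only care required is the bookkeeping of opposites and one comparison between a pseudocolimit taken in the full subcategory and one taken in the ambient category. For the first assertion: a small $2$-cofiltered pseudolimit in $\Pro{C}$ is, by definition, a pseudolimit of a $2$-functor $\cc{I}\mr{}\Pro{C}$ with $\cc{I}$ small $2$-cofiltered, i.e.\ $\cc{I}=\cc{J}^{op}$ for a small $2$-filtered $\cc{J}$; this is exactly the input of Theorem~\ref{teo}, which exhibits the $2$-pro-object $\widetilde{\ff{X}}$ (indexed by the $2$-filtered $2$-category $\cc{K}_\ff{X}$ of Definition~\ref{kequis}) together with an identification $\widetilde{\ff{X}}=\Lim{j\in\cc{J}^{op}}{\ff{X}^j}$ in $\Pro{C}$. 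Since $\widetilde{\ff{X}}$ is an object of $\Pro{C}$ and we are computing inside $\Pro{C}$, no comparison arrow is needed and the first assertion follows.

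For the second assertion I would transport this. By Propositions~\ref{eqconloscolimderep} and~\ref{eqsiif&f}, $\ff{L}\colon\Pro{C}\to\cc{H}om(\cc{C},\,\cc{C}at)^{op}_{fc}$ is a $2$-equivalence, hence $2$-fully-faithful and essentially surjective on objects, so it preserves and reflects pseudolimits and pseudocolimits. Given a small $2$-filtered $\cc{J}$ and a $2$-functor $\ff{G}\colon\cc{J}\to\cc{H}om(\cc{C},\,\cc{C}at)_{fc}$, I would use $2$-full-faithfulness and essential surjectivity to lift $\ff{G}$ to a $2$-functor $\ff{X}\colon\cc{J}^{op}\to\Pro{C}$ with $\ff{L}\ff{X}^j\cong\ff{G}_j$, apply $\ff{L}$ to the pseudolimit pseudocone $\widetilde{\ff{X}}\to\ff{X}^j$ of Theorem~\ref{teo}, and pass to the opposite $2$-category. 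This yields a pseudocolimit pseudocone $\ff{G}_j\to\ff{L}\widetilde{\ff{X}}$ in $\cc{H}om(\cc{C},\,\cc{C}at)_{fc}$, so this full subcategory has all small $2$-filtered pseudocolimits, computed by $\ff{L}\widetilde{\ff{X}}$.

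Finally I would check agreement with the ambient category. Since $\cc{C}at$ has all small pseudocolimits and, by Proposition~\ref{pointwisebilimit}, pseudocolimits in $\cc{H}om(\cc{C},\,\cc{C}at)$ are computed pointwise, the pseudocolimit $\ff{Q}=\coLim{j\in\cc{J}}{\ff{G}_j}$ exists in $\cc{H}om(\cc{C},\,\cc{C}at)$. By the definition of $\ff{L}$ on $2$-pro-objects (Definition~\ref{2proc}) and Proposition~\ref{iso} one has $\ff{L}\widetilde{\ff{X}}=\coLim{(i,j)\in\cc{K}_\ff{X}}{\cc{C}(\ff{X}^j_i,-)}$, a small $2$-filtered pseudocolimit of representables; the $2$-functors $\cc{I}_j\to\cc{K}_\ff{X}$, $i\mapsto(i,j)$, $u\mapsto(id_j,\ff{X}^j_u,\pi^j_u)$, assemble the canonical pseudocones into $\ff{L}\widetilde{\ff{X}}$ and so induce a comparison morphism $\ff{Q}\to\ff{L}\widetilde{\ff{X}}$, which a pointwise check shows to be an equivalence; hence $\ff{Q}\cong\ff{L}\widetilde{\ff{X}}\in\cc{H}om(\cc{C},\,\cc{C}at)_{fc}$, which is the second assertion. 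I expect this last pointwise check to be the main (and essentially the only non-formal) obstacle: it amounts to the Fubini-type statement that the iterated $2$-filtered pseudocolimit $\coLim{j\in\cc{J}}{\coLim{i\in\cc{I}_j}{(-)}}$ is the single $2$-filtered pseudocolimit over $\cc{K}_\ff{X}$, which is precisely what the construction of $\cc{K}_\ff{X}$ is designed to secure and which can be read off from the proof of Theorem~\ref{teo}.
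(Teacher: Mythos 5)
Your first two paragraphs are exactly the paper's (unwritten) argument: the corollary carries no proof precisely because Theorem \ref{teo} already exhibits $\Lim{j\in\cc{J}^{op}}{\ff{X}^j}$ as the 2-pro-object $\widetilde{\ff{X}}$, and the second assertion is meant to be transport along $\ff{L}$. Where you genuinely go beyond the paper is the third paragraph: you correctly observe that Theorem \ref{teo} only delivers the universal property of $\ff{L}\widetilde{\ff{X}}$ tested against objects of $\cc{H}om(\cc{C},\,\cc{C}at)_{fc}$ (i.e.\ against vertices $\ff{Z}$ that are themselves 2-pro-objects), whereas ``the inclusion is closed under small 2-filtered pseudocolimits'' --- and the way the corollary is used in Lemma \ref{casoCat}, where $\cc{H}om(\cc{C},\cc{C}at)(-,\ff{F})$ is applied for an \emph{arbitrary} $\ff{F}$ --- requires the universal property in the ambient 2-category. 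That is a real point which the paper's \cqd{} elides.

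The gap is that this decisive step is asserted rather than carried out, and the pointer you give for it does not quite work. The pointwise comparison $\coLim{j\in\cc{J}}{\coLim{i\in\cc{I}_j}{\cc{C}(\ff{X}^j_i,\ff{C})}}\to\coLim{(i,j)\in\cc{K}_\ff{X}}{\cc{C}(\ff{X}^j_i,\ff{C})}$ cannot be ``read off'' from the proof of Theorem \ref{teo}: that proof computes the \emph{contravariant} homs $2\hbox{-}\cc{P}ro(\cc{C})(\ff{Z},\widetilde{\ff{X}})$, while the pointwise statement is the \emph{covariant} one (the hom-category into $c\ff{C}$), i.e.\ a Fubini/cofinality statement for the Dubuc--Street construction that needs its own argument --- or a re-run of the proof of Theorem \ref{teo} with the vertex $\ff{Z}$ replaced by an arbitrary 2-functor $\ff{G}$, which does appear to go through (the representation lemmas concern the $\ff{X}^j$, not $\ff{Z}$) but must be checked. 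Moreover, even granting the pointwise statement, a 2-natural transformation that is a pointwise equivalence between these (flexible, by \ref{colimflexible} and \ref{pseudo=2forpro}) 2-functors only yields an \emph{equivalence} $\ff{Q}\simeq\ff{L}\widetilde{\ff{X}}$ via \ref{eqencadaCeseq}; to conclude that $\ff{Q}$ is literally a small 2-filtered pseudocolimit of representables, and for the subsequent use in Lemma \ref{casoCat}, you need an isomorphism, which you only get by establishing the strict universal property of $\ff{L}\widetilde{\ff{X}}$ against all of $\cc{H}om(\cc{C},\,\cc{C}at)$ --- which brings you back to the first issue. So the skeleton is right and more honest than the paper, but the one step you yourself identify as non-formal remains open.
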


\vspace{1ex}

 \subsection{Universal property of $\Pro{C}$} $ $
 
In this subsection we prove for \mbox{2-pro-objects} the universal property established for pro-objects in \cite[Ex. I, Prop. 8.7.3.]{G2}.
Consider the 2-functor $\cc{C} \mr{c} 2$-$\cc{P}ro(\cc{C})$ of Corollary \ref{CisPro} and a 2-pro-object $\ff{X} = (\ff{X}_i)_{i\in \cc{I}}$. Given a 2-functor $\cc{C} \mr{\ff{F}} \cc{E}$ into a 2-category closed under
small \mbox{2-cofiltered} pseudolimits, we can naively extend
$\ff{F}$ into a 2-cofiltered pseudolimit preserving 2-functor
$2\hbox{-} \cc{P}ro(\cc{C}) \mr{\widehat{\ff{F}}} \cc{E}$ by defining
$\widehat{\ff{F}}\ff{X} = \Lim{i \in \cc{I}}{\ff{F}\ff{X}_i}$.
This is just part of a 2-equivalence of 2-categories that we develop with the necessary precision in this subsection. First the universal property  should be wholly established for $\cc{E} = \cc{C}at$, and only afterwards can be lifted to any 2-category $\cc{E}$ closed under small 2-cofiltered pseudolimits.

\begin{lemma}\label{casoCat}
 Let $\cc{C}$ be a 2-category and $\ff{F}:\cc{C}\mr{} \cc{C}at$ a \mbox{2-functor.} Then, there exists a 2-functor
 $\widehat{\ff{F}}: 2\hbox{-}\cc{P}ro(\cc{C}) \mr{} \cc{C}at$ that preserves small 2-cofiltered pseudolimits, and an isomorphism
 $\widehat{\ff{F}}c \mr{\cong} \ff{F}$ in $\cc{H}om(\cc{C},\cc{C}at)$.
\end{lemma}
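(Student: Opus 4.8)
The plan is to realize $\widehat{\ff{F}}$ as a composite of two 2-functors and to extract all of its properties from results already in hand. Using the 2-fully-faithful 2-functor $2\hbox{-}\cc{P}ro(\cc{C}) \mr{\ff{L}} \cc{H}om(\cc{C},\cc{C}at)^{op}$ of Proposition \ref{eqconloscolimderep} and the representable 2-functor $\cc{H}om(\cc{C},\cc{C}at)(-,\ff{F}) : \cc{H}om(\cc{C},\cc{C}at)^{op} \mr{} \cc{CAT}$, I would set
$$
\widehat{\ff{F}} \;=\; \cc{H}om(\cc{C},\cc{C}at)(\ff{L}(-),\,\ff{F}) \;:\; 2\hbox{-}\cc{P}ro(\cc{C}) \mr{} \cc{CAT}.
$$
Being a composite of strict 2-functors, $\widehat{\ff{F}}$ is a strict 2-functor. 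Its values are small: for $\ff{X} = (\ff{X}_i)_{i\in\cc{I}}$ one has $\ff{L}\ff{X} = \coLim{i\in\cc{I}}{\cc{C}(\ff{X}_i,-)}$, so by Definition \ref{colimits} and the Yoneda lemma \ref{2Yoneda}, $\widehat{\ff{F}}\ff{X} \cong \Lim{i\in\cc{I}^{op}}{\cc{H}om(\cc{C},\cc{C}at)(\cc{C}(\ff{X}_i,-),\ff{F})} \cong \Lim{i\in\cc{I}^{op}}{\ff{F}\ff{X}_i}$, a small pseudolimit of small categories, hence small (this is precisely the naive pointwise formula anticipated above); since moreover $2\hbox{-}\cc{P}ro(\cc{C})$ is locally small, $\widehat{\ff{F}}$ factors through $\cc{C}at$.

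Next, for the comparison $\widehat{\ff{F}}c \cong \ff{F}$: the 2-functor $c$ of Corollary \ref{CisPro} sends $\ff{C}$ to the 2-pro-object with index 2-category $\{*\}$, so $\ff{L}(c\ff{C}) = \cc{C}(\ff{C},-)$, and inspecting the definitions of $c$ and of the morphisms of $2\hbox{-}\cc{P}ro(\cc{C})$ one finds $\ff{L}(c\ff{f}) = \ff{f}^*$ and $\ff{L}(c\alpha) = \alpha^*$; that is, $\ff{L}c$ is the Yoneda 2-functor $\ff{y}^{(-)}$ of \ref{Y2functor}. Hence $\widehat{\ff{F}}c = \cc{H}om(\cc{C},\cc{C}at)(\ff{y}^{(-)},\ff{F})$, and the Yoneda isomorphism $h$ of Proposition \ref{2Yoneda}, $\theta \mapsto \theta_{\ff{C}}(id_{\ff{C}})$, is the component at $\ff{C}$ of a 2-natural transformation $\widehat{\ff{F}}c \Mr{} \ff{F}$, invertible because each component is. The only thing to check by hand here is 2-naturality in the $\ff{C}$-variable, which is immediate from the formula for $h$ together with the naturality of the transformations $\theta$.

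It then remains to prove that $\widehat{\ff{F}}$ preserves small 2-cofiltered pseudolimits. Since any such pseudolimit is determined up to isomorphism and preservation is invariant under isomorphism (Definition \ref{preservation}), by Theorem \ref{teo} it suffices to treat the pseudolimit $\widetilde{\ff{X}} = \Lim{j\in\cc{J}^{op}}{\ff{X}^j}$ constructed there, for an arbitrary 2-functor $\cc{J}^{op}\mr{}2\hbox{-}\cc{P}ro(\cc{C})$ with $\cc{J}$ small 2-filtered, equipped with its limit pseudocone $\{\widetilde{\ff{X}} \mr{\widetilde{\pi}_j} \ff{X}^j\}_j$. Applying $\ff{L}$, which is the contravariant equivalence of Proposition \ref{eqconloscolimderep} onto $\cc{H}om(\cc{C},\cc{C}at)_{fc}$, turns this pseudocone into a pseudocolimit pseudococone of $\{\ff{L}\ff{X}^j\}_j$ in $\cc{H}om(\cc{C},\cc{C}at)_{fc}$, which by Corollary \ref{cerradaporpseudolimites} is also a pseudocolimit in the ambient $\cc{H}om(\cc{C},\cc{C}at)$; thus $\ff{L}\widetilde{\ff{X}} = \coLim{j\in\cc{J}}{\ff{L}\ff{X}^j}$ there. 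Applying the representable $\cc{H}om(\cc{C},\cc{C}at)(-,\ff{F})$ and using Definition \ref{colimits} (precomposition with a pseudocolimit pseudococone is an isomorphism onto the corresponding pseudolimit) gives
$$
\widehat{\ff{F}}\widetilde{\ff{X}} \;=\; \cc{H}om(\cc{C},\cc{C}at)(\ff{L}\widetilde{\ff{X}},\,\ff{F}) \;\mr{\cong}\; \Lim{j\in\cc{J}^{op}}{\widehat{\ff{F}}\ff{X}^j},
$$
an isomorphism commuting with the pseudocones, i.e.\ the canonical comparison functor; so $\{\widehat{\ff{F}}\widetilde{\ff{X}} \mr{\widehat{\ff{F}}\widetilde{\pi}_j} \widehat{\ff{F}}\ff{X}^j\}_j$ is a pseudolimit pseudocone in $\cc{C}at$, as wanted.

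I expect the substantive input to be entirely concentrated in Corollary \ref{cerradaporpseudolimites} (equivalently Theorem \ref{teo}): the fact that $\ff{L}$ carries the 2-cofiltered pseudolimit of 2-pro-objects to an honest 2-filtered pseudocolimit inside the \emph{ambient} 2-category $\cc{H}om(\cc{C},\cc{C}at)$, not merely inside $\cc{H}om(\cc{C},\cc{C}at)_{fc}$; granted that, everything else above is a formal manipulation of representables and of the defining isomorphisms of pseudo(co)limits. The one remaining manual point is the 2-naturality in $\ff{C}$ of the isomorphism $\widehat{\ff{F}}c \cong \ff{F}$, which is a routine Yoneda computation.
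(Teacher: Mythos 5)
Your proposal is correct and follows essentially the same route as the paper: it defines $\widehat{\ff{F}}$ as the composite $\cc{H}om(\cc{C},\cc{C}at)(-,\ff{F})\circ \ff{L}$, identifies $\widehat{\ff{F}}\ff{X}$ with $\Lim{i\in \cc{I}^{op}}{\ff{F}\ff{X}_i}$ via the pseudocolimit and Yoneda isomorphisms, and derives preservation of small 2-cofiltered pseudolimits from the fact that $\ff{L}$ is a 2-equivalence onto $\cc{H}om(\cc{C},\cc{C}at)_{fc}$ together with Corollary \ref{cerradaporpseudolimites}. The extra details you supply (smallness of the values and 2-naturality of $\widehat{\ff{F}}c \cong \ff{F}$) are points the paper leaves implicit, and your treatment of them is sound.
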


\vspace{1ex}

\begin{proof}
Let $\ff{X}=(\ff{X}_i)_{i\in \cc{I}}\in 2\hbox{-}\cc{P}ro(\cc{C})$ be a 2-pro-object. Define:

\vspace{1ex}

\noindent
$
\widehat{\ff{F}}\ff{X} \;=\;
(\cc{H}om(\cc{C},\cc{C}at)(-,\ff{F})\circ \ff{L})\ff{X} \;=\;
\cc{H}om(\cc{C},\cc{C}at)(\coLim{i\in \cc{I}}{\cc{C}(\ff{X}_i,-)},\ff{F})
\mr{\cong}
$

\hfill
$
\mr{\cong} \Lim{i\in \cc{I}}{\cc{H}om(\cc{C},\cc{C}at)(\cc{C}(\ff{X}_i,-),\ff{F})}
\mr{\cong} \Lim{i\in \cc{I}}{\ff{F}\ff{X}_i}.
$

\vspace{1ex}

Where $\ff{L}$ is the 2-functor of \ref{eqconloscolimderep}, the first isomorphism is by definition of pseudocolimit \ref{colimits}, and the second is the Yoneda isomorphism \ref{2Yoneda}. Since it is a 2-equivalence, the 2-functor $\ff{L}$ preserves any pseudocolimit. Then by Corollary \ref{cerradaporpseudolimites} it follows that the composite
$\cc{H}om(\cc{C},\cc{C}at)(-,\ff{F})\circ \ff{L}$ preserves small 2-cofiltered pseudolimits
 \end{proof}

\begin{theorem} \label{pseudouniversalcat}
Let $\cc{C}$ be any 2-category. Then, pre-composition with
\mbox{$\cc{C} \mr{c} 2\hbox{-}\cc{P}ro(\cc{C})$}  is a 2-equivalence of 2-categories:
$$
\xymatrix@R=.7pc@C=.5pc
 {
  & \cc{H}om(2\hbox{-}\cc{P}ro(\cc{C}),\cc{C}at)_+ \ar[rrrr]^>>>>>>>>{c^*}
  & & & & \comw{\ff{X}\ff{X}} \cc{H}om(\cc{C},\cc{C}at) \comw{\ff{X} \ff{X}}
 }
$$
(where $\cc{H}om(2\hbox{-}\cc{P}ro(\cc{C}),\cc{C}at)_+$ stands for the full subcategory whose objects are those 2-functors that preserve small 2-cofiltered pseudolimits).
\end{theorem}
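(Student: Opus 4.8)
The plan is to produce an explicit quasi-inverse to $c^{*}$ together with two invertible $2$-natural transformations, so that the statement follows directly from Definition \ref{2equivalencia}; the essential-surjectivity content is already Lemma \ref{casoCat}, so the real work will be the second invertible transformation below (which, in particular, forces $c^{*}$ to be $2$-fully-faithful). Define $\widehat{(-)} = \cc{H}om(\cc{C},\cc{C}at)(-,-)\circ(\ff{L}\times\mathrm{id})$, where $\ff{L}\colon\Pro{C}\mr{}\cc{H}om(\cc{C},\cc{C}at)^{op}$ is the $2$-functor of \ref{eqconloscolimderep}; explicitly $\widehat{\ff{F}} = \cc{H}om(\cc{C},\cc{C}at)(\ff{L}(-),\ff{F})$, so $\widehat{\ff{F}}\ff{X} = \cc{H}om(\cc{C},\cc{C}at)(\coLim{i\in\cc{I}}{\cc{C}(\ff{X}_i,-)},\ff{F}) \mr{\cong} \Lim{i\in\cc{I}}{\ff{F}\ff{X}_i}$ as in the proof of \ref{casoCat}. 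This is a $2$-functor by Remark \ref{Homisbifunctor}, and it lands in $\cc{H}om(\Pro{C},\cc{C}at)_{+}$: since $\ff{L}$ is a (contravariant) $2$-equivalence onto $\cc{H}om(\cc{C},\cc{C}at)^{op}_{fc}$ it carries $2$-cofiltered pseudolimits to $2$-filtered pseudocolimits, and $\cc{H}om(\cc{C},\cc{C}at)(-,\ff{F})$ carries those back to pseudolimits in $\cc{C}at$ by the universal property of pseudocolimits (\ref{colimits}) --- this is exactly the computation in \ref{casoCat}.

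For the composite $c^{*}\circ\widehat{(-)}$: by construction of $c$ (\ref{CisPro}) we have $\ff{L}(c\ff{C}) = \cc{C}(\ff{C},-)$, so the Yoneda isomorphism \ref{2Yoneda} gives $\widehat{\ff{F}}(c\ff{C}) = \cc{H}om(\cc{C},\cc{C}at)(\cc{C}(\ff{C},-),\ff{F}) \mr{\cong} \ff{F}\ff{C}$, natural in $\ff{F}$ and in $\ff{C}$. This assembles into an invertible $2$-natural transformation $c^{*}\widehat{\ff{F}} = \widehat{\ff{F}}c \mr{\cong} \ff{F}$, $2$-natural in $\ff{F}$; hence $c^{*}\circ\widehat{(-)} \cong \mathrm{id}_{\cc{H}om(\cc{C},\cc{C}at)}$.

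The heart of the proof is the other composite. For $\ff{G}\in\cc{H}om(\Pro{C},\cc{C}at)_{+}$ I build an isomorphism $\xi^{\ff{G}}\colon\widehat{\ff{G}c}\mr{\cong}\ff{G}$, $2$-natural in $\ff{G}$. For $\ff{X} = (\ff{X}_i)_{i\in\cc{I}}$, chain $\widehat{\ff{G}c}\,\ff{X} = \cc{H}om(\cc{C},\cc{C}at)(\coLim{i\in\cc{I}}{\cc{C}(\ff{X}_i,-)},\ff{G}c)\mr{\cong}\Lim{i\in\cc{I}}{(\ff{G}c)(\ff{X}_i)}$ (universal property of the pseudocolimit followed by Yoneda \ref{2Yoneda}), and compose with the inverse of the comparison isomorphism $\ff{G}\ff{X} = \ff{G}(\Lim{i\in\cc{I}^{op}}{\ff{X}_i})\mr{\cong}\Lim{i\in\cc{I}}{\ff{G}\ff{X}_i}$, which is invertible precisely because $\ff{X} = \Lim{i\in\cc{I}^{op}}{\ff{X}_i}$ in $\Pro{C}$ (\ref{X=lim}) and $\ff{G}$ preserves $2$-cofiltered pseudolimits (\ref{preservation}). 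Thus $\xi^{\ff{G}}_{\ff{X}}$ is a composite of canonical isomorphisms; $2$-naturality in $\ff{G}$ is formal (bifunctoriality, \ref{Homisbifunctor}, together with the fact that every isomorphism in the chain is canonical), so the content left is $2$-naturality of $\xi^{\ff{G}}$ in the variable $\ff{X}$.

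To check $2$-naturality of $\xi^{\ff{G}}$ against an arrow $\ff{f}\colon\ff{X}\mr{}\ff{Y}$ of $\Pro{C}$ with $\ff{Y} = (\ff{Y}_j)_{j\in\cc{J}}$: since $\ff{G}$ preserves the pseudolimit $\ff{Y} = \Lim{j\in\cc{J}^{op}}{\ff{Y}_j}$, an arrow into $\ff{G}\ff{Y}$ is detected --- together with its pseudocone coherence $2$-cells --- by its composites with the $\ff{G}\pi^{\ff{Y}}_j$, so it suffices to verify $\ff{G}\pi^{\ff{Y}}_j\circ\ff{G}\ff{f}\circ\xi^{\ff{G}}_{\ff{X}} = \ff{G}\pi^{\ff{Y}}_j\circ\xi^{\ff{G}}_{\ff{Y}}\circ\widehat{\ff{G}c}\ff{f}$ for each $j$, plus the corresponding compatibility of the $\pi^{\ff{Y}}_u$-cells. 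By Proposition \ref{idrepresenta}(1) pick $i\in\cc{I}$ and $\ff{X}_i\mr{\ff{r}}\ff{Y}_j$ in $\cc{C}$ with $\pi^{\ff{Y}}_j\ff{f} = c\ff{r}\circ\pi^{\ff{X}}_i$; the left side becomes $\ff{G}(c\ff{r})\circ\ff{G}\pi^{\ff{X}}_i\circ\xi^{\ff{G}}_{\ff{X}}$, and unwinding the construction of $\xi^{\ff{G}}$ gives $\ff{G}\pi^{\ff{X}}_i\circ\xi^{\ff{G}}_{\ff{X}} = \xi^{\ff{G}}_{c\ff{X}_i}\circ\widehat{\ff{G}c}(\pi^{\ff{X}}_i)$ with $\xi^{\ff{G}}_{c\ff{X}_i}$ the Yoneda isomorphism of the second paragraph; treating the right side the same way and using $2$-naturality of $\xi^{\ff{G}}$ restricted along $c$ (the already-checked Yoneda naturality) together with $2$-naturality of the projections $\pi^{\ff{X}}$, $\pi^{\ff{Y}}$ closes the equation. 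The $2$-cell clause of $2$-naturality, for $\ff{X}\cellrd{\ff{f}}{\alpha}{\ff{g}}\ff{Y}$, is handled identically, now invoking Proposition \ref{idrepresenta}(2) and Lemma \ref{lema1} to represent $\pi^{\ff{Y}}_j\alpha$ by a $2$-cell of $\cc{C}$, with Lemmas \ref{lema4} and \ref{lema5} ensuring the reduction is independent of the chosen representatives. The main obstacle is exactly this last verification: faithfully translating the abstract universal properties of the pseudo(co)limits defining $\widehat{(-)}$ into the concrete ``representing arrow/$2$-cell'' description of morphisms of $\Pro{C}$, while carrying along the non-identity coherence $2$-cells of the pseudolimit cones --- which is precisely what the computational Lemmas \ref{idrepresenta}--\ref{lema2} of the previous subsection were designed to make tractable. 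Once $\xi^{\ff{G}}$ is shown $2$-natural, $\widehat{(-)}\circ c^{*}\cong\mathrm{id}$, and together with the transformation of the second paragraph the pair $(\widehat{(-)},c^{*})$ is a $2$-equivalence by Definition \ref{2equivalencia}.
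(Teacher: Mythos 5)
Your proposal is correct in substance but takes a genuinely different route from the paper. The paper proves the theorem by verifying the two hypotheses of Proposition \ref{eqsiif&f}: essential surjectivity is Lemma \ref{casoCat}, and 2-fully-faithfulness of $c^*$ is proved directly, by uniquely lifting a 2-natural transformation (resp.\ modification) $\ff{F}c \Rightarrow \ff{G}c$ along the pseudolimit cones $\ff{G}\ff{X} \mr{\ff{G}\pi_i} \ff{G}\ff{X}_i$, using that $\ff{G}$ preserves the pseudolimit $\ff{X}=\Lim{i\in \cc{I}^{op}}{\ff{X}_i}$ of \ref{X=lim}; the quasi-inverse then exists abstractly (via choice) by \ref{eqsiif&f}. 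You instead exhibit the quasi-inverse $\widehat{(-)}=\cc{H}om(\cc{C},\cc{C}at)(\ff{L}(-),-)$ explicitly and verify Definition \ref{2equivalencia} directly, constructing the counit-type isomorphism $\xi^{\ff{G}}:\widehat{\ff{G}c}\Rightarrow \ff{G}$; what this buys is an explicit, choice-free inverse 2-functor, at the price that the whole weight of the proof shifts to the strict 2-naturality of $\xi^{\ff{G}}$ in $\ff{X}$, which is exactly the same kind of verification the paper disposes of with ``it is not difficult to check'' in its fully-faithfulness step, so the total work is comparable. Two remarks on your sketch of that verification. First, in the detection argument the clause about the $\pi^{\ff{Y}}_u$-cells is not a formality: the projections of a pseudolimit of categories are not jointly injective (two objects of $\Lim{j}{\ff{G}\ff{Y}_j}$ can have equal components but different coherence isomorphisms), so equality of the $j$-components of the two composites does not by itself give the naturality square, and the coherence-cell comparison must actually be carried out (with the same representing-2-cell lemmas). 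Second, the verification becomes essentially trivial if you check naturality for the inverse family instead: under the identification $\ff{L}\ff{X}(\ff{C})\cong \Pro{C}(\ff{X},c\ff{C})$ of \ref{basica2}, the comparison $\ff{G}\ff{X}\mr{}\widehat{\ff{G}c}\,\ff{X}$ is just $x\mapsto (\ff{g}\mapsto \ff{G}\ff{g}(x))$, whose strict 2-naturality in $\ff{X}$ is immediate from the 2-functoriality of $\ff{G}$, and whose invertibility is precisely the hypothesis that $\ff{G}$ preserves the pseudolimit \ref{X=lim}; taking $\xi^{\ff{G}}$ to be its inverse then avoids the representing-arrow bookkeeping altogether.
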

\begin{proof}
We check that the 2-functor $c^*$ is essentially surjective on objects and 2-fully-faithful:

\emph{Essentially surjective on objects}: It follows from lemma \ref{casoCat}.

\emph{2-fully-faithful}: We check that if $\ff{F}$ and $\ff{G}$ are 2-functors from $2\hbox{-}\cc{P}ro(\cc{C})$ to $\cc{C}at$ that preserve small 2-cofiltered pseudolimits, then
\begin{equation} \label{hrest}
\cc{H}om(2\hbox{-}\cc{P}ro(\cc{C}),\cc{C}at)_+(\ff{F},\ff{G})
                    \mr{c^*} \cc{H}om(\cc{C},\cc{C}at)(\ff{F}c, \ff{G}c)
\end{equation}
is an isomorphism of categories.

Let $\ff{F}c \cellrd{\theta c}{\mu}{\eta c} \ff{G}c \in \cc{H}om(\cc{C},\cc{C}at)(\ff{F}c, \ff{G}c)$.
It can be easily checked that the composites
$\{\ff{F}\ff{X} \mr{\ff{F} \pi_i} \ff{F}\ff{X}_i
\cellrd{\theta_{\ff{X}_i}}{\mu_{\ff{X}_i}}{\eta_{\ff{X}_i}} \ff{G}\ff{X}_i\}_{i\in \cc{I}}$
 determine two pseudocones for $\ff{G}\ff{X}$ together with a morphism of pseudocones. Since $\ff{G}$ preserves small \mbox{2-cofiltered} pseudolimits, post-composing with
 $\ff{GX} \mr{\ff{G}\pi_i} \ff{GX}_i$ is an isomorphism of categories
 $\cc{C}at(\ff{FX}, \ff{GX}) \mr{(\ff{G} \pi)_*}
 \ff{PC}_{\cc{C}at}(\ff{FX}, \ff{GX})$.
 It follows there \mbox{exists} a unique 2-cell in $\cc{C}at$,
 $\ff{F}\ff{X}
\cellrd{\theta'_{\ff{X}}}{\mu'_{\ff{X}}}{\eta'_{\ff{X}}} \ff{G}\ff{X}$,
such that
$\ff{G}\pi_i \theta'_{\ff{X}} \,=\, \theta_{\ff{X}_i} \ff{F} \pi_i$,
$\ff{G}\pi_i \eta'_{\ff{X}} \,=\, \eta_{\ff{X}_i} \ff{F} \pi_i$, and
$\ff{G}\pi_i \mu'_{\ff{X}} \,=\, \mu_{\ff{X}_i} \ff{F} \pi_i$,
$\forall i\in \cc{I}$. It is not difficult to check that $\theta'_{\ff{X}}$, $\eta'_{\ff{X}}$ are in fact 2-natural on $\ff{X}$, and that $\mu'_{\ff{X}}$ is a modification. Clearly $\theta' c = \theta$, $\eta' c = \eta$, and
$\mu' c = \mu$. Thus \ref{hrest} is an isomorphism of categories.
\end{proof}
\begin{lemma}\label{es-sury}
 Let $\cc{C}$ be a 2-category, $\cc{E}$ a 2-category closed under
 small \mbox{2-cofiltered} pseudolimits and $\ff{F}: \cc{C} \mr{} \cc{E}$ a 2-functor. Then, there exists a 2-functor
 \mbox{$\widehat{\ff{F}}: 2\hbox{-}\cc{P}ro(\cc{C}) \mr{} \cc{E}$}
 that preserves small 2-cofiltered pseudolimits, and an isomorphism
 $\widehat{\ff{F}}c \mr{\cong} \ff{F}$ in $\cc{H}om(\cc{C},\cc{E})$.
 \end{lemma}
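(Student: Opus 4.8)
The plan is to reduce the general case to the already-established case $\cc{E} = \cc{C}at$ (Lemma \ref{casoCat} and Theorem \ref{pseudouniversalcat}) by means of the Yoneda embedding for $\cc{E}$, which detects and reflects pseudolimits. First I would fix a 2-pro-object $\ff{X} = (\ff{X}_i)_{i \in \cc{I}}$ and simply define $\widehat{\ff{F}}\ff{X} = \Lim{i \in \cc{I}^{op}}{\ff{F}\ff{X}_i}$, which exists in $\cc{E}$ by hypothesis, together with the chosen pseudolimit pseudocone $\widehat{\ff{F}}\ff{X} \mr{p_i} \ff{F}\ff{X}_i$. To see that this is 2-functorial in $\ff{X}$, and in fact with the expected action on the morphism and 2-cell categories of $2$-$\cc{P}ro(\cc{C})$, I would use the description of $\cc{E}(\ff{A}, \widehat{\ff{F}}\ff{X})$ as the category of pseudocones $\ff{PC}_\cc{E}(\ff{FX}_{(-)}, \ff{A})$ for each $\ff{A} \in \cc{E}$: precomposition along an arrow $\ff{X} \mr{\ff{g}} \ff{Y}$ in $2$-$\cc{P}ro(\cc{C})$ should transport pseudocones over $\ff{FY}_{(-)}$ to pseudocones over $\ff{FX}_{(-)}$, using the basic formula \eqref{basica2} together with Proposition \ref{idrepresenta} and Lemma \ref{lema1} to represent $\ff{g}$ (and its 2-cells) by actual arrows of $\cc{C}$, to which $\ff{F}$ can be applied. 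By the pseudo-Yoneda argument (or simply by the universal property of the pseudolimit applied with $\ff{A}$ running over $\cc{E}$), this determines a unique arrow $\widehat{\ff{F}}\ff{X} \mr{\widehat{\ff{F}}\ff{g}} \widehat{\ff{F}}\ff{Y}$, and similarly for 2-cells; uniqueness gives functoriality. The isomorphism $\widehat{\ff{F}}c \cong \ff{F}$ is immediate since $c\ff{C}$ has index 2-category $\{*\}$, so $\widehat{\ff{F}}(c\ff{C}) = \Lim{*}{\ff{FC}} = \ff{FC}$ up to the canonical isomorphism.

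It then remains to check that $\widehat{\ff{F}}$ preserves small 2-cofiltered pseudolimits. Here the clean route is: for each $\ff{E} \in \cc{E}$, the 2-functor $\cc{E}(\ff{E}, -) : \cc{E} \mr{} \cc{C}at$ is a 2-functor into $\cc{C}at$, and the composite $\cc{E}(\ff{E}, -) \circ \widehat{\ff{F}} : 2\hbox{-}\cc{P}ro(\cc{C}) \mr{} \cc{C}at$ agrees (up to isomorphism) with $\widehat{\cc{E}(\ff{E}, \ff{F}-)}$, the canonical extension from Lemma \ref{casoCat} of the 2-functor $\cc{E}(\ff{E}, \ff{F}-) : \cc{C} \mr{} \cc{C}at$. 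Indeed, both send $\ff{X}$ to $\Lim{i}{\cc{E}(\ff{E}, \ff{FX}_i)} \cong \cc{E}(\ff{E}, \Lim{i}{\ff{FX}_i})$, the last isomorphism because 2-representables preserve pseudolimits; and this identification is compatible with the extension morphisms by the uniqueness clause above. By Lemma \ref{casoCat}, each $\widehat{\cc{E}(\ff{E}, \ff{F}-)}$ preserves small 2-cofiltered pseudolimits, hence so does $\cc{E}(\ff{E}, -) \circ \widehat{\ff{F}}$ for every $\ff{E}$. Finally, a pseudocone in $\cc{E}$ is a pseudolimit pseudocone if and only if its image under every $\cc{E}(\ff{E}, -)$ is one in $\cc{C}at$ (this is exactly Definition \ref{preservation} read through the Yoneda embedding of $\cc{E}$, using that $\cc{E}$ is locally small and that pseudolimits in $\cc{C}at$ are computed by Remark \ref{limincat}); therefore $\widehat{\ff{F}}$ itself preserves small 2-cofiltered pseudolimits.

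I expect the main obstacle to be the bookkeeping in showing that $\widehat{\ff{F}}$ is genuinely a \emph{2-functor} and not merely an assignment on objects: one must choose, for each morphism and each 2-cell of $2$-$\cc{P}ro(\cc{C})$, a representing arrow/2-cell in $\cc{C}$ via Proposition \ref{idrepresenta}, apply $\ff{F}$, and then verify that the induced arrow of pseudolimits is independent of all such choices and strictly respects vertical and horizontal composition. The device that makes this manageable is again uniqueness in the universal property of the pseudolimit $\widehat{\ff{F}}\ff{X}$ (equivalently, the Yoneda lemma \ref{yonedaff} applied in $\cc{E}$): two arrows $\widehat{\ff{F}}\ff{X} \mr{} \widehat{\ff{F}}\ff{Y}$ that induce the same pseudocone after composing with the projections $p_j$ must coincide, so all coherence equations reduce to equalities of pseudocones into $\ff{FY}_j$, which hold because the corresponding data in $\cc{C}$ were chosen to represent the same morphisms of $2$-$\cc{P}ro(\cc{C})$ (Lemmas \ref{lema4}, \ref{lema5}, \ref{lema2}). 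Everything else — existence of $\widehat{\ff{F}}\ff{X}$, the isomorphism $\widehat{\ff{F}}c \cong \ff{F}$, and preservation of pseudolimits — follows formally from the hypothesis on $\cc{E}$ and the $\cc{C}at$-valued case.
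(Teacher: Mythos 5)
Your overall strategy is viable and shares the paper's two key ingredients (the object-level definition $\widehat{\ff{F}}\ff{X}=\Lim{i\in\cc{I}^{op}}{\ff{F}\ff{X}_i}$ and the reduction of preservation to the $\cc{C}at$-valued case via representables), but the route is genuinely different. The paper never constructs $\widehat{\ff{F}}$ on arrows and 2-cells by hand: it forms $\ff{y}_{(-)}\ff{F}:\cc{C}\mr{}\cc{H}om(\cc{E}^{op},\cc{C}at)$, transposes by cartesian closedness (\ref{cartesianclosed}) to a 2-functor $\cc{E}^{op}\mr{}\cc{H}om(\cc{C},\cc{C}at)$, composes with a quasi-inverse of $c^*$ from Theorem \ref{pseudouniversalcat}, transposes back to get a strict 2-functor $\widetilde{\ff{F}}:2\hbox{-}\cc{P}ro(\cc{C})\mr{}\cc{H}om(\cc{E}^{op},\cc{C}at)$ preserving small 2-cofiltered pseudolimits (pointwise computation, \ref{pointwisebilimit}), shows $\widetilde{\ff{F}}\ff{X}$ lies in the essential image of $\ff{y}_{(-)}$, and factors through the 2-fully-faithful Yoneda embedding. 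All strict 2-functoriality is thereby inherited from the $\cc{C}at$-case, where it comes for free from the formula $\widehat{\ff{G}}=\cc{H}om(\cc{C},\cc{C}at)(\ff{L}(-),\ff{G})$ of Lemma \ref{casoCat}. Your preservation argument (post-compose with every $\cc{E}(\ff{E},-)$, identify the composite with the canonical extension of $\cc{E}(\ff{E},\ff{F}-)$, and use that representables jointly detect pseudolimit pseudocones) is in substance the paper's "pointwise" step, rephrased without the presheaf 2-category; it is fine, although the 2-naturality in $\ff{X}$ of the identification $\cc{E}(\ff{E},\widehat{\ff{F}}\ff{X})\cong\widehat{\cc{E}(\ff{E},\ff{F}-)}(\ff{X})$ still depends on how you defined $\widehat{\ff{F}}$ on arrows.

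That is where your version needs more than you give, and it is exactly the point you flag. Defining $\widehat{\ff{F}}\ff{g}$ by choosing, for each $j$, a representative of $\pi_j\ff{g}$ via Proposition \ref{idrepresenta} and invoking uniqueness in the universal property of $\Lim{j}{\ff{F}\ff{Y}_j}$ does give a well-defined arrow, but it does not by itself give \emph{strict} compatibility with composition and identities: composites of chosen representatives need not be chosen representatives of composites, and uniqueness only forces two candidate arrows to coincide when the full pseudocone data (components \emph{and} coherence 2-cells) literally agree, which arbitrary choices will not arrange. A priori this yields a pseudofunctor, whereas the statement requires a 2-functor landing in $\cc{H}om(\cc{C},\cc{E})$. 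The repair is to eliminate the choices: define the hom-functor as the canonical composite
$$
2\hbox{-}\cc{P}ro(\cc{C})(\ff{X},\ff{Y})\;\cong\;\Lim{j\in\cc{J}^{op}}{\coLim{i\in\cc{I}}{\cc{C}(\ff{X}_i,\ff{Y}_j)}}\;\mr{}\;\Lim{j\in\cc{J}^{op}}{\cc{E}(\widehat{\ff{F}}\ff{X},\ff{F}\ff{Y}_j)}\;\cong\;\cc{E}(\widehat{\ff{F}}\ff{X},\widehat{\ff{F}}\ff{Y}),
$$
where the middle arrow is induced by the pseudococones $\cc{C}(\ff{X}_i,\ff{Y}_j)\mr{\ff{F}}\cc{E}(\ff{F}\ff{X}_i,\ff{F}\ff{Y}_j)\mr{}\cc{E}(\widehat{\ff{F}}\ff{X},\ff{F}\ff{Y}_j)$ given by the projections of $\widehat{\ff{F}}\ff{X}$, and then verify strict compatibility with the composition of Definition \ref{2proc} on both components and coherence 2-cells. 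This is doable, and your appeal to Lemmas \ref{lema4}, \ref{lema5}, \ref{lema2} is the right tool for the well-definedness part, but it is precisely the bookkeeping the paper's detour through $\cc{H}om(\cc{E}^{op},\cc{C}at)$ is designed to perform only once, for $\cc{E}=\cc{C}at$. (A minor slip: an arrow $\ff{X}\mr{\ff{g}}\ff{Y}$ transports pseudocones over $\ff{F}\ff{X}_{(-)}$ to pseudocones over $\ff{F}\ff{Y}_{(-)}$, not the other way around.)
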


\begin{proof}
If $\ff{X}=(\ff{X}_i)_{i\in \cc{I}}\in 2\hbox{-}\cc{P}ro(\cc{C})$, define
$\widehat{\ff{F}}\ff{X}=\Lim{i\in \cc{I}^{op}}{\ff{F}\ff{X}_i}$. We will prove that this is the object function part of a 2-functor, and that this 2-functor has the rest of the properties asserted in the proposition.

Consider the composition
$\ff{y}_{(-)} \,\ff{F}:\,\cc{C}\mr{\ff{F}} \cc{E}\mr{\ff{y}_{(-)}}
 \cc{H}om(\cc{E}^{op},\cc{C}at)$,
 where $\ff{y}_{(-)}$ is the Yoneda 2-functor (\ref{Y2functor}). Under the isomorphism \ref{cartesianclosed} this corresponds to a 2-functor
$\cc{E}^{op} \mr{} \cc{H}om(\cc{C},\cc{C}at)$. Composing this
\mbox{2-functor} with a quasi-inverse  $\widehat{(-)}$ for the 2-equivalence in \ref{pseudouniversalcat}, we obtain a 2-functor
$\cc{E}^{op} \mr{} \cc{H}om(2\hbox{-}\cc{P}ro(\cc{C}),\cc{C}at)_+$,
which in turn corresponds to a 2-functor
$ 2\hbox{-}\cc{P}ro(\cc{C}) \mr{\widetilde{\ff{F}}} \cc{H}om(\cc{E}^{op},\cc{C}at)$. The 2-functor $\widetilde{\ff{F}}$ preserves small 2-cofiltered pseudolimits because they are computed pointwise in
$\cc{H}om(\cc{E}^{op},\cc{C}at)$ (\ref{pointwisebilimit}). Chasing the isomorphisms shows that we have the following diagram:
\begin{equation} \label{efetilde}
\xymatrix@C=0.5pc@R=0.5pc
           {
            { }
            & &  { }
           \\
            & {\widetilde{\ff{F}} c \mr{\cong} \ff{y}_{(-)} \ff{F}},
            &
           \\ { }
            & &  { }
           }
\hspace{6ex}
\xymatrix@C=0.5pc@R=0.5pc
           {
            \cc{C} \ar[rr]^<<<<<<<{c}  \ar[dd]_{\ff{F}}
            & &  2\hbox{-}\cc{P}ro(\cc{C}) \ar[dd]^{\widetilde{\ff{F}}}
           \\
            & \Downarrow \cong
            &
           \\ \cc{E} \ar[rr]_<<<<<{\ff{y}_{(-)}}
            & &  \cc{H}om(\cc{E}^{op},\cc{C}at)
           }
\end{equation}
Consider the following chain of isomorphisms (the first and the third because $\widetilde{\ff{F}}$ and $\ff{y}_{(-)}$ preserve pseudolimits (\ref{yonedapreserves}), and the middle one given by \ref{efetilde}):
$$
\widetilde{\ff{F}}\ff{X} =
\widetilde{\ff{F}} \Lim{i \in \cc{I}}{\ff{X}_i} \mr{\cong}
\Lim{i \in \cc{I}}{\widetilde{\ff{F}} c \ff{X}_i} \mr{\cong}
\Lim{i \in \cc{I}}{\ff{y}_{(-)} \ff{F}\ff{X}_i} \ml{\cong}
\ff{y}_{(-)} \Lim{i \in \cc{I}}{\ff{F} \ff{X}_i}.
$$
This shows that $\widetilde{\ff{F}}\ff{X}$ is in the essential image of
$\ff{y}_{(-)}$. Since $\ff{y}_{(-)}$ is \mbox{2-fully} faithful (\ref{yonedaff}), it follows there is a factorization
\mbox{$\ff{y}_{(-)} \widehat{\ff{F}} \mr{\cong} \widetilde{\ff{F}}$,} given by a \mbox{2-functor}
$2\hbox{-}\cc{P}ro(\cc{C}) \mr{\widehat{\ff{F}}} \ff{E}$. Clearly
$\widehat{\ff{F}}$ preserves small \mbox{2-cofiltered} \mbox{pseudolimits}. We have
$
\ff{y}_{(-)} \widehat{\ff{F}} c  \mr{\cong}
\widetilde{\ff{F}} c \mr{\cong}
\ff{y}_{(-)} \widehat{\ff{F}}.
$
Finally, the fully faithfulness of $\ff{y}_{(-)}$ provides an isomorphism
$\ff{y}_{(-)} \widehat{\ff{F}} \mr{\cong} \ff{F}$. This finishes the proof.
\end{proof}
Exactly the same proof of theorem \ref{pseudouniversalcat} applies with an arbitrary \mbox{2-category $\cc{E}$} in place of $\cc{C}at$, and we have:
\begin{theorem} \label{pseudouniversal}
Let $\cc{C}$ be any 2-category, and $\cc{E}$ a 2-category closed under small 2-cofiltered pseudolimits. Then, pre-composition with
\mbox{$\cc{C} \mr{c} 2\hbox{-}\cc{P}ro(\cc{C})$}  is a 2-equivalence of 2-categories:
$$
\xymatrix@R=.7pc@C=.3pc
 {
  & \cc{H}om(2\hbox{-}\cc{P}ro(\cc{C}),\cc{E})_+ \ar[rrrr]^>>>>>>{c^*}
  & & & & \comw{\ff{X}\ff{X}} \cc{H}om(\cc{C},\cc{E}) \comw{\ff{X} \ff{X}}
 }
$$
Where $\cc{H}om(2\hbox{-}\cc{P}ro(\cc{C}),\cc{E})_+$ stands for the full subcategory whose objects are those 2-functors that preserve small 2-cofiltered pseudolimits.
\cqd
\end{theorem}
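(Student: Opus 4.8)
The plan is to run, with $\cc{E}$ in place of $\cc{C}at$, exactly the argument that proves Theorem \ref{pseudouniversalcat}; the only inputs that change are Lemma \ref{es-sury} (replacing Lemma \ref{casoCat}) and the standing hypothesis that $\cc{E}$ admits all small $2$-cofiltered pseudolimits. By Proposition \ref{eqsiif&f} it suffices to show that $c^*$ is essentially surjective on objects and $2$-fully-faithful. Essential surjectivity is immediate from Lemma \ref{es-sury}: given $\ff{F}\colon\cc{C}\mr{}\cc{E}$ one obtains a small $2$-cofiltered pseudolimit preserving $2$-functor $\widehat{\ff{F}}\colon 2\hbox{-}\cc{P}ro(\cc{C})\mr{}\cc{E}$ with $\widehat{\ff{F}}c\cong\ff{F}$ in $\cc{H}om(\cc{C},\cc{E})$, so $\widehat{\ff{F}}$ lies in $\cc{H}om(2\hbox{-}\cc{P}ro(\cc{C}),\cc{E})_+$ and $c^*\widehat{\ff{F}}\cong\ff{F}$.

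For $2$-fully-faithfulness, fix $\ff{F},\ff{G}\colon 2\hbox{-}\cc{P}ro(\cc{C})\mr{}\cc{E}$, both preserving small $2$-cofiltered pseudolimits; one must check that $\cc{H}om(2\hbox{-}\cc{P}ro(\cc{C}),\cc{E})_+(\ff{F},\ff{G})\mr{c^*}\cc{H}om(\cc{C},\cc{E})(\ff{F}c,\ff{G}c)$ is an isomorphism of categories. Start from a $2$-cell $\ff{F}c\Rightarrow\ff{G}c$, with components $\theta_\ff{C},\eta_\ff{C}$ (for $\ff{C}\in\cc{C}$) and modification $\mu_\ff{C}$. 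For a $2$-pro-object $\ff{X}=(\ff{X}_i)_{i\in\cc{I}}$, Proposition \ref{X=lim} gives $\ff{X}=\Lim{i\in\cc{I}^{op}}{\ff{X}_i}$ with projections $\pi_i$ and pseudocone $2$-cells $\pi_u$; since $\cc{I}$ is small $2$-filtered, $\cc{I}^{op}$ is $2$-cofiltered, so by hypothesis both $\{\ff{F}\ff{X}\mr{\ff{F}\pi_i}\ff{F}\ff{X}_i\}$ and $\{\ff{G}\ff{X}\mr{\ff{G}\pi_i}\ff{G}\ff{X}_i\}$ are pseudolimit pseudocones in $\cc{E}$. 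The composites $\theta_{\ff{X}_i}\ff{F}\pi_i$ and $\eta_{\ff{X}_i}\ff{F}\pi_i$, equipped with the $2$-cells assembled from $\ff{F}\pi_u$, $\ff{G}\pi_u$ and the pseudonaturality $2$-cells of $\theta$ and $\eta$, form two pseudocones for the diagram $i\mapsto\ff{G}\ff{X}_i$ with vertex $\ff{F}\ff{X}$, and the family $\mu_{\ff{X}_i}\ff{F}\pi_i$ is a morphism between them; that is, they are objects and a morphism of $\ff{PC}_\cc{E}(\ff{F}\ff{X},\ff{G}\ff{X})$. Since $\ff{G}$ preserves the pseudolimit, postcomposition with the $\ff{G}\pi_i$ is an isomorphism $\cc{E}(\ff{F}\ff{X},\ff{G}\ff{X})\mr{\cong}\ff{PC}_\cc{E}(\ff{F}\ff{X},\ff{G}\ff{X})$, whence there are unique arrows $\theta'_\ff{X},\eta'_\ff{X}\colon\ff{F}\ff{X}\mr{}\ff{G}\ff{X}$ and a unique $2$-cell $\mu'_\ff{X}\colon\theta'_\ff{X}\Rightarrow\eta'_\ff{X}$ with $\ff{G}\pi_i\,\theta'_\ff{X}=\theta_{\ff{X}_i}\ff{F}\pi_i$, $\ff{G}\pi_i\,\eta'_\ff{X}=\eta_{\ff{X}_i}\ff{F}\pi_i$ and $\ff{G}\pi_i\,\mu'_\ff{X}=\mu_{\ff{X}_i}\ff{F}\pi_i$ for every $i$.

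It then remains to verify that $\theta'$ and $\eta'$ are $2$-natural in $\ff{X}$, that $\mu'$ is a modification between them, that $c^*$ sends the resulting $2$-cell back to the original one, and that this preimage is unique; by construction each of these is checked after postcomposing with the $\ff{G}\pi_i$ and using the injectivity half of the isomorphism above. The last two points are formal: for $\ff{C}\in\cc{C}$ the $2$-pro-object $c\ff{C}$ has index $\{*\}$ with identity projection, whence $\theta'_{c\ff{C}}=\theta_\ff{C}$ and likewise for $\eta',\mu'$, so $c^*(\theta',\mu',\eta')=(\theta,\mu,\eta)$, and uniqueness of the preimage is forced by the same pseudolimit isomorphisms. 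The one place that requires genuine bookkeeping, and hence the main (though routine) obstacle, is the $2$-naturality of $\theta',\eta'$ and the modification axiom for $\mu'$: given an arrow $\ff{f}\colon\ff{X}\mr{}\ff{Y}$ in $2\hbox{-}\cc{P}ro(\cc{C})$, write $\rho_j\colon\ff{Y}\mr{}\ff{Y}_j$ for the projections of $\ff{Y}$; one proves $\ff{G}\ff{f}\circ\theta'_\ff{X}=\theta'_\ff{Y}\circ\ff{F}\ff{f}$ by postcomposing with each $\ff{G}\rho_j$, invoking Proposition \ref{idrepresenta} to pick, for each $j$, an arrow $\ff{X}_i\mr{\ff{r}}\ff{Y}_j$ in $\cc{C}$ with $\rho_j\ff{f}=\ff{r}\,\pi_i$, so that the desired identity collapses to the defining relations above together with the $2$-naturality of $\theta$ at $\ff{r}$ inside $\cc{C}$, after threading the relevant pseudonaturality $2$-cells through the pseudocone axioms PC1, PC2 and PCM of \ref{pseudocone}. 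Everything else is formally identical to the proof of Theorem \ref{pseudouniversalcat}, and the conclusion follows.
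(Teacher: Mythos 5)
Your proposal is correct and is essentially the paper's own argument: the paper proves this theorem precisely by observing that the proof of Theorem \ref{pseudouniversalcat} carries over verbatim with $\cc{E}$ in place of $\cc{C}at$, with Lemma \ref{es-sury} supplying essential surjectivity. Your extra detail on checking $2$-naturality of $\theta',\eta'$ via Proposition \ref{idrepresenta} fills in a step the paper leaves as ``not difficult to check,'' but it is the same route.
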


From theorem \ref{pseudouniversal} it follows automatically the pseudo-functoriality of the assignment of the 2-category $2\hbox{-}\cc{P}ro(\cc{C})$ to each 2-category $\cc{C}$, and in such a way that $c$ becomes a pseudonatural transformation. But we can do better:

\vspace{1ex}

If we put
$\cc{E} = \Pro{D}$ in \ref{pseudouniversal} it follows there is a 2-functor \mbox{(post-composing} with $c$ followed by a quasi-inverse in \ref{pseudouniversal})
\begin{equation} \label{ufa!}
\xymatrix@R=.7pc@C=.3pc
 {
  &  \cc{H}om(\cc{C},\cc{D})
\ar[rrrr]^>>>>>>{\widehat{(-)}}
  & & & & \comw{\ff{X}\ff{X}}
  \cc{H}om(2\hbox{-}\cc{P}ro(\cc{C}),\,
               2\hbox{-}\cc{P}ro(\cc{D}))_+
           \comw{\ff{X} \ff{X}} \hspace{-2.5ex},
 }
\end{equation}
and for each 2-functor $\cc{C} \mr{\ff{F}} \cc{D}$, a diagram:
\begin{equation} \label{pseudosquare}
\xymatrix@C=0.5pc@R=0.5pc
           {
            \Pro{C} \ar[rr]^{\widehat{\ff{F}}}
            & &  \Pro{D}
            \\
            & \Downarrow \cong
            &
            \\
            \cc{C} \ar[rr]_{\ff{\ff{F}}} \ar[uu]_{c}
            & & \cc{D} \ar[uu]_{c}
           }
\end{equation}
Given any 2-pro-object $\ff{X} \in \Pro{C}$, set
$2\hbox{-}\cc{P}ro(\ff{F})(\ff{X}) = \widehat{\ff{F}}\ff{X}$. It is straightforward to check that this determines a 2-functor
$$
\xymatrix@C=5ex@R=0.5pc
           {
            \Pro{C} \ar[rr]^
            {2\hbox{-}\cc{P}ro(\ff{F})}
            & &  \Pro{D}
           }
$$
making diagram \ref{pseudosquare} commutative. It follows we have an \mbox{isomorphism}
\mbox{$\widehat{\ff{\ff{F}}}\ff{X} \mr{\cong}
                   2\hbox{-}\cc{P}ro(\ff{F})(\ff{X})$} 2-natural in $\ff{X}$. This shows that the 2-functor $2\hbox{-}\cc{P}ro(\ff{F})$ preserves small 2-cofiltered pseudolimits because
$\widehat{\ff{F}}$ does. Also, it follows that
$2\hbox{-}\cc{P}ro(\ff{F})$ determines a 2-functor as in \ref{ufa!}. In conclusion, denoting now by $2\hbox{-}\cc{CAT}$ the 2-category of locally small 2-categories (see \ref{2CAT}) we have:
\begin{theorem}
The definition $2\hbox{-}\cc{P}ro(\ff{F})(\ff{X}) = \widehat{\ff{F}}\ff{X}$ determines a \mbox{2-functor}
$$
2\hbox{-}\cc{P}ro(\hbox{-}): 2\hbox{-}\cc{C}\cc{A}\cc{T} \mr{} 2\hbox{-}\cc{C}\cc{A}\cc{T}_+
$$
in such a way that $c$ becomes a 2-natural transformation (where $2\hbox{-}\cc{C}\cc{A}\cc{T}_+$ is the full sub 2-category of locally small 2-categories closed under small \mbox{2-cofiltered} pseudo limits and small pseudolimit preserving 2-functors). \cqd
\end{theorem}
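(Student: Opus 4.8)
The plan is to promote what the preceding paragraph already supplies — for each 2-functor $\ff{F}\colon\cc{C}\to\cc{D}$, a small-2-cofiltered-pseudolimit-preserving 2-functor $2\hbox{-}\cc{P}ro(\ff{F})=\widehat{\ff{F}}$ with $2\hbox{-}\cc{P}ro(\ff{F})\,c=c\,\ff{F}$, and, for fixed $\cc{C},\cc{D}$, the 2-functor $\widehat{(-)}$ of \ref{ufa!} — to a strict 2-functor $2\hbox{-}\cc{P}ro(\hbox{-})\colon 2\hbox{-}\cc{C}\cc{A}\cc{T}\to 2\hbox{-}\cc{C}\cc{A}\cc{T}_+$ for which $c$ is a strict 2-natural transformation. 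The key preliminary observation is that $2\hbox{-}\cc{P}ro(\ff{F})$ has an explicit componentwise description: since $\widehat{\ff{F}}\ff{X}=\Lim{i\in\cc{I}^{op}}{\ff{F}\ff{X}_i}$ and, by Proposition \ref{X=lim}, this pseudolimit in $\Pro{D}$ may be taken to be the 2-pro-object $(\ff{F}\ff{X}_i)_{i\in\cc{I}}$ over the \emph{same} 2-filtered 2-category $\cc{I}$, the 2-functor $2\hbox{-}\cc{P}ro(\ff{F})$ sends $(\ff{X}_i)_{i\in\cc{I}}\mapsto(\ff{F}\ff{X}_i)_{i\in\cc{I}}$ and acts on hom-categories, through the basic formula $\Pro{C}(\ff{X},\ff{Y})\cong\Lim{j\in\cc{J}^{op}}{\coLim{i\in\cc{I}}{\cc{C}(\ff{X}_i,\ff{Y}_j)}}$ of \ref{iso}, by the functors $\cc{C}(\ff{X}_i,\ff{Y}_j)\to\cc{D}(\ff{F}\ff{X}_i,\ff{F}\ff{Y}_j)$; that this is well defined and functorial is a routine check using the Dubuc--Street presentation \ref{defLF} of 2-filtered pseudocolimits (a 2-functor applied to premorphisms respects the homotopy relation and the composition of \ref{defLF}).

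From this componentwise description the 1-cell part of functoriality is immediate: $2\hbox{-}\cc{P}ro(id_{\cc{C}})=id$ (this is Proposition \ref{X=lim} again), and $2\hbox{-}\cc{P}ro(\ff{G}\ff{F})=2\hbox{-}\cc{P}ro(\ff{G})\circ 2\hbox{-}\cc{P}ro(\ff{F})$, since both sides act on objects, arrows and 2-cells by applying the composite $\ff{G}\ff{F}$ componentwise. For the 2-cell part, given a 2-natural transformation $\alpha\colon\ff{F}\Rightarrow\ff{G}$ in $\cc{H}om(\cc{C},\cc{D})$ I define $2\hbox{-}\cc{P}ro(\alpha)$ via Theorem \ref{pseudouniversal} applied with $\cc{E}=\Pro{D}$ (legitimate, locally small, and closed under small 2-cofiltered pseudolimits by Corollary \ref{cerradaporpseudolimites}): pre-composition with $c$ is an isomorphism of categories $\cc{H}om(\Pro{C},\Pro{D})_+(2\hbox{-}\cc{P}ro(\ff{F}),2\hbox{-}\cc{P}ro(\ff{G}))\mr{\cong}\cc{H}om(\cc{C},\Pro{D})(c\,\ff{F},c\,\ff{G})$ (using $2\hbox{-}\cc{P}ro(\ff{F})\,c=c\,\ff{F}$ and $2\hbox{-}\cc{P}ro(\ff{G})\,c=c\,\ff{G}$), and $2\hbox{-}\cc{P}ro(\alpha)$ is the unique 2-natural transformation mapping to $c\,\alpha$; equivalently it is $\alpha$ applied componentwise, and it coincides with $\widehat{(-)}$ of \ref{ufa!} evaluated at $\alpha$. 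Because pre-composition with $c$ is a \emph{functor} that is an isomorphism on each relevant hom-category, while $c$ is already strictly natural on 1-cells and composition and whiskering in the $\cc{H}om$ 2-categories are strict, every remaining 2-functor axiom for $2\hbox{-}\cc{P}ro(\hbox{-})$ — vertical functoriality of $\alpha\mapsto 2\hbox{-}\cc{P}ro(\alpha)$, preservation of identity 2-cells, compatibility with horizontal composition of 2-cells and with identity 1-cells, and interchange — is obtained by pre-composing the two sides to be compared with $c$, observing they become equal downstairs, and invoking injectivity of pre-composition with $c$ on the appropriate hom-category (each can also be read off the componentwise description directly). Finally $c$ is 2-natural: strict naturality on 1-cells is the strictly commuting diagram \ref{pseudosquare} with identity comparison (valid precisely because $\widehat{\ff{F}}\ff{X}$ has been taken to be the componentwise $(\ff{F}\ff{X}_i)_i$), and strict naturality on 2-cells is the equation $2\hbox{-}\cc{P}ro(\alpha)\,c_{\cc{C}}=c_{\cc{D}}\,\alpha$, which is the defining property of $2\hbox{-}\cc{P}ro(\alpha)$.

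I expect the main obstacle to lie entirely in the \emph{strictness} bookkeeping: the 2-equivalence of Theorem \ref{pseudouniversal} is 2-fully-faithful but not injective on objects, so it does not by itself force the object-level equalities $2\hbox{-}\cc{P}ro(\ff{G}\ff{F})=2\hbox{-}\cc{P}ro(\ff{G})\circ 2\hbox{-}\cc{P}ro(\ff{F})$ and $2\hbox{-}\cc{P}ro(id_{\cc{C}})=id$; these are available only through the explicit ``same index 2-category, $\ff{F}$ componentwise'' presentation of $2\hbox{-}\cc{P}ro(\ff{F})$, whose identification with $\widehat{\ff{F}}=\Lim{i\in\cc{I}^{op}}{\ff{F}\ff{X}_i}$ rests on Proposition \ref{X=lim}. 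The one genuinely technical verification is that this componentwise recipe respects composition of morphisms of 2-pro-objects, which forces one to unwind the isomorphism of \ref{iso} together with the construction \ref{defLF} and check that applying a 2-functor to representatives is compatible with the (non-unique) composition of premorphisms; everything else is a matter of transporting strict identities along the isomorphisms supplied by Theorem \ref{pseudouniversal}.
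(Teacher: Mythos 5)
Your proposal is correct and follows essentially the same route as the paper, whose ``proof'' is the paragraph preceding the theorem statement: define $2\hbox{-}\cc{P}ro(\ff{F})$ so that the square with $c$ commutes strictly (which forces the componentwise, same-index-2-category description), identify it up to 2-natural isomorphism with $\widehat{\ff{F}}$ to get preservation of small 2-cofiltered pseudolimits, and handle 2-cells through the 2-equivalence of Theorem \ref{pseudouniversal} with $\cc{E}=\Pro{D}$. Your expansion correctly locates the only real content — the strictness of functoriality lives in the componentwise presentation rather than in the universal property, and the compatibility of the componentwise action with composition of pro-morphisms is the one check requiring the Dubuc--Street construction — both of which the paper subsumes under ``straightforward to check.''
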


\vspace{2ex}

\vspace{4ex}



\vspace{1ex}

 Mar\'ia Emilia Descotte, 
 
 edescotte@gmail.com
 
 Departamento de Matem\'atica,
 
 F. C. E. y N., Universidad de Buenos Aires,
 
 1428 Buenos Aires, Argentina.
 
 \vspace{2ex}
 
 Eduardo J. Dubuc, 
 
 edubuc@dm.uba.ar

 Departamento de Matem\'atica,
 
 F. C. E. y N., Universidad de Buenos Aires,
 
 1428 Buenos Aires, Argentina.

\end{document}